\let\oldtocsection=\tocsection
\let\oldtocsubsection=\tocsubsection 
\let\oldtocsubsubsection=\tocsubsubsection
\renewcommand{\tocsection}[2]{\vspace{0.5em}\hspace{0em}\oldtocsection{#1}{#2}}
\renewcommand{\tocsubsection}[2]{\vspace{0.5em}\hspace{1em}\oldtocsubsection{#1}{#2}}
\renewcommand{\tocsubsubsection}[2]{\vspace{0.5em}\hspace{2em}\oldtocsubsubsection{#1}{#2}}
\theoremstyle{definition}
\numberwithin{equation}{section}
\renewenvironment{proof}{{\bfseries \noindent Proof.}}{\demo}
\newcommand\xqed[1]{%
	\leavevmode\unskip\penalty9999 \hbox{}\nobreak\hfill
	\quad\hbox{#1}}
\newcommand\demo{\xqed{$\square$}}
\def\R{\mathbb R}
\def\N{\mathbb N}
\def\C{\mathbb C}
\def\HH{\mathcal H}
\def\AA{\mathcal A}
\def\la {{\lambda}}
\newcommand {\nc}   {\newcommand}
\nc {\be}   {\begin{equation}} \nc {\ee}   {\end{equation}} \nc
\nc {\eeq}  {\end{eqnarray}} \nc {\beqs}
\nc {\eeqs} {\end{eqnarray*}}
\def\edc{\end{document}}
\providecommand{\abs}[1]{\lvert#1\rvert}
\providecommand{\norm}[1]{\lVert#1\rVert}
\numberwithin{equation}{section}
\newtheorem{Thm}{Theorem}[section]
\newtheorem{lem}{Lemma}[section]
\newtheorem{prop}{Proposition}[section]
\newtheorem{rk}{Remark}[section]
\newtheorem{assump}{Assumption}
\definecolor{carnelian}{rgb}{0.7, 0.11, 0.11}
\definecolor{carmine}{rgb}{0.59, 0.0, 0.09}
\definecolor{burgundy}{rgb}{0.5, 0.0, 0.13}
\definecolor{darkmidnightblue}{rgb}{0.0, 0.2, 0.4}
\definecolor{dimgray}{rgb}{0.75, 0.75, 0.75}
\definecolor{palecarmine}{rgb}{0.69, 0.25, 0.21}
\numberwithin{dummy}{section}
\numberwithin{equation}{section}
\def\AA{\mathcal A}
\def\HH{\mathbf{\mathcal H}}
\providecommand{\norm}[1]{\lVert#1\rVert}
\providecommand{\abs}[1]{\lvert#1\rvert}
\NewDocumentCommand{\biglittlecap}{m}
{
\sheljohn_biglittecap:nn { #1 }
}
\providecommand{\keywords}[1]{\textbf{\textit{Index terms---}} #1}
\begin{document}

\title[Stability of wave-plate transmission problem]{Stability for an interface transmission problem of  wave-plate equations with dynamical boundary controls}
\author{Zahraa Abdallah$^{1,2}$}
\author{Stéphane Gerbi$^{2}$}
\author{Chiraz Kassem$^{1}$}
\author{Ali Wehbe$^{1}$}
\address{$^{1}$Lebanese University, Faculty of Sciences Khawarizmi Laboratory of Mathematics and Applications-KALMA Hadath-Beirut, Lebanon.}
\address{$^{2}$Laboratoire de Mathématiques-LAMA UMR 5127-CNRS and Université Savoie Mont Blanc, Campus Scientifique
73376 Le Bourget-du-Lac Cedex, France.}

\email{abdallazahraa02@gmail.com, stephane.gerbi@univ-smb.fr, shiraz.kassem@hotmail.com, ali.wehbi@ul.edu.lb}
\keywords{Wave-plate model, transmission problem, dynamical boundary controls, stabilization, geometric conditions.}

\setcounter{equation}{0}
\begin{abstract}
We investigate a two-dimensional transmission model consisting of a wave equation and a Kirchhoff plate equation with dynamical boundary controls under geometric conditions. The two equations are coupled through transmission conditions along a steady interface between the domains in which the wave and plate equations evolve, respectively. Our primary concern is the stability analysis of the system, which has not appeared in the literature. For this aim, using a unique continuation theorem, the strong stability of the system is proved without any geometric condition and in the absence of compactness of the resolvent. Then, we show that our system lacks exponential (uniform) stability. However, we establish a polynomial energy decay estimate of type $1/t$ for smooth initial data using the frequency domain approach from semigroup theory, which combines a contradiction argument with the multiplier technique. This method leads to certain geometrical conditions concerning the wave's and the plate's domains.
\end{abstract}
\maketitle
\pagenumbering{roman}
\maketitle
\tableofcontents
\clearpage
\pagenumbering{arabic}
\setcounter{page}{1}
\newpage

\section{Introduction}
\subsection{Presentation of the System}\label{Systemwaveplate}
  In this paper, we investigate a wave-plate system coupled through transmission conditions along a steady interface between the domains in which the wave and plate equations evolve respectively with dynamical boundary controls. More precisely, let $\Omega \subset \R^2$ be an open bounded set with  Lipschitz boundary $\partial \Omega$ such that $\overline{\Omega}=\overline{\Omega}_1 \cup \overline{\Omega}_2$, where $\Omega_i$ is a bounded set with Lipschitz boundary $\partial \Omega_i,$ $i=1,2$ such that $\Omega_1 \cap \Omega_2 =\emptyset$. We denote by $I$ the interior of $\overline{\Omega}_1 \cap \overline{\Omega}_2,$ which is called the interface between $\Omega_1$ and $\Omega_2,$ and $\Gamma_i =\partial \Omega_i \backslash \overline{I}$ represents the exterior boundary of $\Omega_i$ with positive measure, $i=1,2$. We consider the following interface transmission wave-plate model:
\begin{equation}\label{p5-sys2}
\left\{ \begin{array}{llll}
\displaystyle  u_{tt}-\Delta u =0, & \text{in} \ \  \Omega_1 \times \R_+^*,\vspace{0.15cm}\\
\displaystyle w_{tt}+\Delta^2 w =0, & \text{in} \ \   \Omega_2 \times \R_+^*,\vspace{0.15cm}\\
\displaystyle u=w, \ \ \mathcal{B}_1 w=0, \ \ \mathcal{B}_2w=\partial_{\nu_1}u, & \text{on} \ \ I \times \R_+^*,\vspace{0.15cm}\\
\displaystyle \partial_{\nu_1} u + \eta =0, & \text{on} \ \ \Gamma_1 \times \R_+^*,\vspace{0.15cm}\\ 
\displaystyle \mathcal{B}_1 w+ \xi =0, & \text{on} \ \ \Gamma_2 \times  \R_+^*\vspace{0.15cm},\\
\displaystyle \mathcal{B}_2 w- \zeta =0, & \text{on} \ \ \Gamma_2 \times \R_+^*,
\end{array}
\right.
\end{equation}
with the initial conditions:
\begin{equation} \label{initial1}
 u(x,0)=u_0(x), \ \ u_t(x,0)=u_1(x), \ \ x \in \Omega_1, \ \ \eta(x,0)=\eta_0(x), \ \ x \in  \Gamma_1,
\end{equation}
\begin{equation}\label{initial2} w(x,0)=w_0(x), \ \ w_t(x,0)=w_1(x), \ \  x \in  \Omega_2, \ \ \xi(x,0)=\xi_0(x), \ \ \zeta(x,0)=\zeta_0(x), \ \ x \in  \Gamma_2,
\end{equation}
where $\eta, \ \xi$ and $\zeta$ denote the dynamical boundary controls, $\nu_i =(\nu_{i1},\nu_{i2})$ is the unit outward normal vector along  $\partial \Omega_i$, and $\tau_i =(-\nu_{i2},\nu_{i1})$ is the unit tangent vector along $\partial \Omega_i$, $i=1,2$. Now, let's proceed with defining the gradient of a scalar function $f(x_1, x_2)$ as follows: $$\nabla f =\left( f_{x_1},f_{x_2}\right),$$ where $f_{x_1}$ and $f_{x_2}$ represent the partial derivatives of $f$ with respect to $x_1$ and $x_2$, respectively. Next, the Laplacian operator is defined as the divergence of the gradient of $f,$ given by $$\Delta f= \text{div} \left(\nabla f \right)= f_{x_1 x_1}+f_{x_2 x_2},$$ where $f_{x_1 x_1}$ and $f_{x_2 x_2}$ are the second partial derivatives of $f$ with respect to $x_1$ and $x_2$. Additionally, in this problem's context, we introduce the constant $\mu \in \left(0,\frac{1}{2}\right)$ as the Poisson coefficient. Moreover, we define the boundary operators $\mathcal{B}_1$ and $\mathcal{B}_2$ on $\partial \Omega_2$ as follows:
$$
\mathcal{B}_1 f=\Delta f+(1-\mu) \mathcal{C}_1f
$$
and
$$
\mathcal{B}_2 f=\partial_{\nu_2}\Delta f+(1-\mu)\partial_{\tau_2}\mathcal{C}_2 f,
$$
where
$$
\mathcal{C}_1 f=2\nu_{21}\nu_{22}f_{x_1x_2} -\nu_{21}^2f_{x_2x_2}-\nu_{22}^2 f_{x_1x_1} \ \ \ \text{and} \ \ \ \mathcal{C}_2 f= (\nu_{21}^2-\nu_{22}^2)f_{x_1x_2}-\nu_{21}\nu_{22}\left(f_{x_1x_1}-f_{x_2x_2} \right).
$$
The damping of the system is made via the indirect damping mechanism that involves the following first-order differential equations: 
\begin{equation}\label{p5-diff}
\left\{	\begin{array}{llll}
\displaystyle \eta_t - u_t + \eta  =0, & \text{on} \ \ \Gamma_1 \times \R_+^*,\vspace{0.15cm}\\ 
\xi_t -\partial_{\nu_2} w_t + \xi  =0, & \text{on} \ \ \Gamma_2 \times \R_+^*\vspace{0.15cm},\\
\displaystyle \zeta_t -w_t+ \zeta =0, & \text{on} \ \ \Gamma_2 \times \R_+^*.
\end{array}
\right.
\end{equation}
Moreover, by trivial computation, we know that 
\begin{equation}\label{p5-JEL}
	\mathcal{C}_1 f=-\partial^2_{\tau_2}f-\partial_{\tau_2} \nu_{22}f_{x_1} +\partial_{\tau_2} \nu_{21}f_{x_2} \ \ \ \text{and} \ \ \ \mathcal{C}_2 f=\partial_{\nu_2 \tau_2} f-\partial_{\tau_2}\nu_{21}f_{x_1}-\partial_{\tau_2}\nu_{22}f_{x_2}.
\end{equation}

Let $u, \eta, w, \xi$ and $\zeta$ be smooth solutions of system \eqref{p5-sys2}-\eqref{p5-diff}. We define their associated energy by
$$
E(t)=\frac{1}{2} \left\{\int_{\Omega_1}\left(|\nabla u|^2+|u_t|^2  \right)dx+ \int_{\Gamma_1} |\eta|^2 d \Gamma \right\}+ \frac{1}{2} \left\{a(w,\overline{w})+\int_{\Omega_2}|w_t|^2dx + \int_{\Gamma_2} \left(|\xi|^2+|\zeta|^2\right) d \Gamma\right\} ,
$$
where the sesquilinear form $a:H^2(\Omega_2)\times H^2(\Omega_2) \longmapsto \C$ is defined by 
\begin{equation}\label{p5-1.3}
a(f,\overline{g})=\int_{\Omega_2} \left[f_{x_1x_1}\overline{g}_{x_1x_1}+f_{x_2x_2}\overline{g}_{x_2x_2}+\mu\left(f_{x_1x_1} \overline{g}_{x_2x_2}+f_{x_2x_2} \overline{g}_{x_1x_1}\right) +2(1-\mu)f_{x_1x_2}\overline{g}_{x_1x_2}\right]dx.
\end{equation}
For further analysis, the following Green's formula (see \cite{lagnese1989boundary}) is used in the study:
\begin{equation}\label{GF}
a(f,\overline{g})=\int_{\Omega_2} \Delta^2 f \overline{g} dx +\int_{\partial \Omega_2 }\left( \mathcal{B}_1f \partial_{\nu_2}\overline{g}-\mathcal{B}_2 f \overline{g}   \right) d\Gamma, \ \ \forall f\in H^4 (\Omega_2 ),  \ g \in H^2 (\Omega_2).
\end{equation}
\begin{lem}\label{p5-lemenergy}
Let  $U=(u,u_t,\eta ,w,w_t,\xi,\zeta )$ be a regular solution of  system \eqref{p5-sys2}-\eqref{p5-diff}. Then, the energy $E(t)$  satisfies the following estimation 
$$
E^\prime (t)=- \int_{\Gamma_1} |\eta|^2d\Gamma-\int_{\Gamma_2} \left(|\xi|^2 + |\zeta|^2\right) d\Gamma \leq 0.
$$
\end{lem}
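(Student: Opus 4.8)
The plan is to carry out the classical energy-method computation: differentiate $E(t)$ along a regular solution $U$, insert the evolution equations of \eqref{p5-sys2}, integrate by parts on $\Omega_1$ with the ordinary Green formula for $\Delta$ and on $\Omega_2$ with the plate Green formula \eqref{GF}, and then collapse all the resulting boundary integrals using the transmission conditions on $I$, the boundary conditions on $\Gamma_1$ and $\Gamma_2$, and the dynamical equations \eqref{p5-diff}. Since solutions are $\C$-valued I work with real parts throughout, using repeatedly that an expression of the form $z\,\overline{z_t}-\overline z\,z_t$ (and likewise $\nabla u\cdot\nabla\overline{u_t}-\nabla u_t\cdot\nabla\overline u$) is purely imaginary, hence has vanishing real part.

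For the wave part, $\tfrac12\tfrac{d}{dt}\int_{\Omega_1}(|\nabla u|^2+|u_t|^2)\,dx=\mathrm{Re}\int_{\Omega_1}\bigl(\nabla u\cdot\nabla\overline{u_t}+u_t\,\overline{u_{tt}}\bigr)dx$; substituting $u_{tt}=\Delta u$ and integrating $u_t\,\overline{\Delta u}$ by parts, the interior contributions combine into a purely imaginary quantity and drop out, leaving
\[
\mathrm{Re}\!\int_{\partial\Omega_1}\!\overline{\partial_{\nu_1}u}\;u_t\,d\Gamma .
\]
On $\Gamma_1$ we have $\partial_{\nu_1}u=-\eta$, so combined with $\tfrac12\tfrac{d}{dt}\int_{\Gamma_1}|\eta|^2\,d\Gamma$ and the first equation of \eqref{p5-diff} (i.e. $\eta_t=u_t-\eta$) this piece reduces to $-\int_{\Gamma_1}|\eta|^2\,d\Gamma$. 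On $I$ there remains $\mathrm{Re}\int_I\overline{\partial_{\nu_1}u}\;u_t\,d\Gamma$, which, using $\mathcal{B}_2w=\partial_{\nu_1}u$ and $u=w$ (hence $u_t=w_t$), I rewrite as $\mathrm{Re}\int_I\mathcal{B}_2w\,\overline{w_t}\,d\Gamma$, to be matched by the plate side.

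For the plate part, $\tfrac12\tfrac{d}{dt}a(w,\overline w)=\mathrm{Re}\,a(w_t,\overline w)=\mathrm{Re}\,a(w,\overline{w_t})$ by the Hermitian symmetry of $a$ (this last identity is needed so that \eqref{GF} applies, its $H^4(\Omega_2)$ argument being $w$). With $\Delta^2w=-w_{tt}$, formula \eqref{GF} turns $\mathrm{Re}\,a(w,\overline{w_t})+\mathrm{Re}\int_{\Omega_2}w_t\,\overline{w_{tt}}\,dx$ into
\[
\mathrm{Re}\!\int_{\partial\Omega_2}\!\bigl(\mathcal{B}_1w\,\partial_{\nu_2}\overline{w_t}-\mathcal{B}_2w\,\overline{w_t}\bigr)d\Gamma ,
\]
the interior parts cancelling. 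On $I$, $\mathcal{B}_1w=0$ and $\mathcal{B}_2w=\partial_{\nu_1}u$ produce exactly $-\mathrm{Re}\int_I\mathcal{B}_2w\,\overline{w_t}\,d\Gamma$, which cancels the leftover wave term. On $\Gamma_2$, using $\mathcal{B}_1w=-\xi$, $\mathcal{B}_2w=\zeta$ together with $\tfrac12\tfrac{d}{dt}\int_{\Gamma_2}(|\xi|^2+|\zeta|^2)\,d\Gamma$ and the last two equations of \eqref{p5-diff} (which give $\partial_{\nu_2}w_t=\xi_t+\xi$ and $w_t=\zeta_t+\zeta$), the $\Gamma_2$ integrals collapse to $-\int_{\Gamma_2}(|\xi|^2+|\zeta|^2)\,d\Gamma$. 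Adding the wave and plate contributions gives the asserted identity.

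The computation is routine; the only delicate point is the boundary bookkeeping — keeping straight which normal ($\nu_1$ or $\nu_2$) appears in each term and verifying that the two interface integrals produced by the two Green formulas are genuine negatives of one another, which is precisely where the transmission conditions $u=w$, $\mathcal{B}_1w=0$ and $\mathcal{B}_2w=\partial_{\nu_1}u$ on $I$ are used. The regularity assumed on $U$ (in particular $w\in H^4(\Omega_2)$, needed for \eqref{GF}) is exactly what justifies differentiating under the integral sign and all the integrations by parts.
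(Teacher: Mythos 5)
Your proposal is correct and follows essentially the same route as the paper: multiply the wave and plate equations by $\overline{u}_t$ and $\overline{w}_t$, apply Green's formula in $\Omega_1$ and the plate identity \eqref{GF} in $\Omega_2$, take real parts, cancel the interface terms via the transmission conditions, and absorb the $\Gamma_1$, $\Gamma_2$ boundary integrals using \eqref{p5-diff}. The sign bookkeeping in each boundary term checks out, so nothing further is needed.
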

\begin{proof}
	Firstly, multiplying the first and second equations of \eqref{p5-sys2}  by $\overline{u}_t$ and $\overline{w}_t$ respectively, then using Green's formula in $\Omega_1$ and $\Omega_2$ and taking the real part, we get 
	\begin{equation}\label{p5-lemenergy1}
	\frac{1}{2} \frac{d}{dt}\left\{\int_{\Omega_1}\left(|\nabla u|^2+|u_t|^2  \right)dx+a(w,\overline{w})+\int_{\Omega_2}|w_t|^2dx\right\} + \Re \left\{ \int_{\Gamma_1} \eta \overline{u}_t d\Gamma + \int_{\Gamma_2}\left(\xi\partial_{\nu_2} \overline{w}_t+\zeta\overline{w}_t\right) d \Gamma\right\}=0 ,
	\end{equation}
	where $\Re$ stands for the real part of a complex number. Secondly, by inserting the equations \eqref{p5-diff} into the second term of \eqref{p5-lemenergy1}, we get
	\begin{equation}\label{p5-lemenergy2}\begin{split}
\Re \left\{ \int_{\Gamma_1} \eta \overline{u}_t d\Gamma + \int_{\Gamma_2}\left(\xi\partial_{\nu_2} \overline{w}_t+\zeta\overline{w}_t\right) d \Gamma\right\}=\frac{1}{2} \frac{d}{dt}\left\{\int_{\Gamma_1}|\eta|^2 d\Gamma + \int_{\Gamma_2}\left(|\xi|^2+|\zeta|^2  \right)d\Gamma\right\} \\ + \int_{\Gamma_1} |\eta|^2 d\Gamma + \int_{\Gamma_2}\left(|\xi|^2+|\zeta|^2  \right)d\Gamma.\end{split}
\end{equation}
Combining equations \eqref{p5-lemenergy1} and \eqref{p5-lemenergy2}, we obtain 
$$
E^\prime (t)=- \int_{\Gamma_1} |\eta|^2d\Gamma-\int_{\Gamma_2} \left(|\xi|^2 + |\zeta|^2\right) d\Gamma.
$$
\end{proof}\\
Thus, from Lemma \ref{p5-lemenergy}, we deduce that the system \eqref{p5-sys2}-\eqref{p5-diff} is dissipative in the sense that the energy $E(t)$ is non-increasing with respect to time variable $t.$

\begin{rk}
	{\rm Note that the dissipative mechanism of the system \eqref{p5-sys2}-\eqref{p5-diff} remains constructed by a single dynamical boundary control of the wave or the plate.
}\hfill
\end{rk}

\subsection{Motivation and Aims} Dynamical boundary controls refer to approaches in real-life applications of mathematical physics and engineering. Since {\"O}. Morg{\"u}l proposed such a damping mechanism on the boundary of elastic beams 
(see, for instance, \cite{morgul1992dynamic}, 
 \cite{morgul1992dynamic2} and \cite{morgul1994control}), many authors have been interested in studying similar problems in the context of 
 plates (see \cite{article}, \cite{rao1998stabilization} and \cite{rao2005polynomial}). Recently in \cite{rao2015stability}, B. Rao et al. 
 considered the boundary stabilization 
 of a wave equation by means of singular dynamical boundary control. 
 In particular, they considered the following system on a given open bounded set $\Omega$ of $\R^N$ with boundary $\Gamma$ of class $C^2$ divided into two disjoint parts $\Gamma_0$ and $\Gamma_1:$ 
 \begin{equation}\label{p5-laila}
\left\{ \begin{array}{lll}
\displaystyle  u_{tt}-\Delta u =0, & \text{in} \ \  \Omega \times [0,T],\vspace{0.15cm}\\
\displaystyle u=0, & \text{on} \ \ \Gamma_0 \times [0,T],\vspace{0.15cm}\\
\displaystyle \frac{\partial u}{\partial {\nu} } + \eta =0, & \text{on} \ \ \Gamma_1 \times [0,T],\vspace{0.15cm}\\ 
\displaystyle \eta_t- u_t=- \eta, & \text{on} \ \ \Gamma_1 \times [0,T]
\end{array}
\right.
\end{equation}
with the initial condition:
\begin{equation} \label{laila-initial1}
 u(x,0)=u_0(x), \ \ u_t(x,0)=u_1(x), \ \ x \in \Omega, \ \ \eta(x,0)=\eta_0(x), \ \ x \in  \Gamma_1,
\end{equation}
where $\eta$ denotes the dynamical boundary control. 
The authors proved that the energy of the system \eqref{p5-laila}-\eqref{laila-initial1} does not decay uniformly (exponentially) to zero. However, using a multiplier method, they showed the polynomial stability of the system with an energy decay rate of $1/t$ under certain geometrical conditions for all smooth initial data. 
\\

 On the other hand, stimulated by many practical applications in the modeling and control of engineering, 
 transmission problems have attracted considerable attention over the past several years (see \cite{ammari2011study}, \cite{gong2017stabilization}, \cite{krstic2008backstepping}, \cite{wang2015stabilization}, \cite{wang2011stabilization}, \cite{hassine2018asymptotic}, \cite{hassine2016energy}, \cite{zhang2015stabilization} and the references therein). For instance, K. Ammari and S. Nicaise in \cite{ammari2010stabilization} considered the stabilization of a system coupling the wave equation with a Kirchhoff plate and damped through frictional boundary dissipation laws. Mainly, they deal with the following system:
 \begin{equation}\label{sergekais-sys}
\left\{ \begin{array}{llll}
\displaystyle  \partial_t^2 u_{1}(x,t)-\Delta u_1(x,t) =0, & \text{in} \ \  \Omega_1 \times (0,+\infty),\vspace{0.15cm}\\
\displaystyle \partial_t^2 u_{2}(x,t)+\Delta^2 u_2(x,t) =0, & \text{in} \ \   \Omega_2 \times (0,+\infty),\vspace{0.15cm}\\
\displaystyle u_i (x,0)=u_i^0(x), \ \ \partial_t u_i(x,0)=u_i^1(x), & \text{in} \ \   \Omega_i, \ \ i=1,2,\vspace{0.15cm}\\
\displaystyle u_1=u_2, \ \ \mathcal{B}_1 u_2=0, \ \ \mathcal{B}_2 u_2=\partial_{\nu_1} u_1, & \text{on} \ \ I \times (0,+\infty),\vspace{0.15cm}\\
\displaystyle \partial_{\nu_1} u_1=-\alpha_1 u_1 -  \partial_{t} u_1, & \text{on} \ \ \Gamma_1 \times  (0,+\infty),\vspace{0.15cm}\\ 
\displaystyle \mathcal{B}_1 u_2=-\beta \partial_{\nu_2} u_2- \partial_{\nu_2} \partial_{t} u_2, & \text{on} \ \ \Gamma_2 \times  (0,+\infty)\vspace{0.15cm},\\
\displaystyle \mathcal{B}_2 u_2=\alpha_2 u_2+ \partial_{t} u_2, & \text{on} \ \ \Gamma_2 \times  (0,+\infty)\vspace{0.15cm},
\end{array}
\right.
\end{equation}
where $\Omega$ refers to a bounded domain in $\R^2,$ as described in Subsection \ref{Systemwaveplate}. 
The unit normal vector of $\partial \Omega_i$ for $i=1,2$ and the unit tangent vector along $\partial \Omega_i$ are as described in Subsection \ref{Systemwaveplate}. The boundary operator $\mathcal{B}_j$ with $j=1,2,$ is also defined on $\partial \Omega_2$ as mentioned above.
Additionally, $\alpha_1, \ \alpha_2,$ and $\beta$ are three fixed positive constants.  
The authors proved, under certain geometric assumptions on the polygonal domains $\Omega_i, \ i=1,2,$ that the semi-group of the problem \eqref{sergekais-sys} is exponentially stable in the energy space when the boundary damping is of frictional type. 
\\

 To the best of our knowledge, the stabilization problem of a transmission system of wave and Kirchhoff plate equations with dynamical boundary controls remains unexplored in the literature and remains an open problem. It is essential to acknowledge that much of the motivation for studying the stabilization of this system arises from its significance in engineering and its strong physical foundation. This approach is expected to solve various control challenges in flexible structures composed of two physically distinct materials, where dynamical control plays a key role. Consequently, we focus on the transmission wave-plate model \eqref{p5-sys2}-\eqref{p5-diff} and the primary objective of this paper is to estimate the decay rate of the energy for this system under specific geometric conditions.\\

 Before going on, let us situate our work in comparison with previously published results in \cite{ammari2010stabilization}. Significantly, our work differs from \cite{ammari2010stabilization} in terms of the damping choices applied to the boundary. While \cite{ammari2010stabilization} utilized frictional boundary damping, we employ dynamical boundary controls, introducing novel mathematical challenges and leading to distinct results. One of the most intriguing issues in the mathematical theory of \eqref{p5-sys2}-\eqref{p5-diff} is the regularity of solutions, which we emphasize in this study. We have found that the proposed dynamical boundary conditions reduce the regularity of the solution compared to \cite{ammari2010stabilization}.  This reduction stems primarily from the lack of regularity in the boundary control $\partial_{\nu_1} u$ near the boundary $\Gamma_1$, posing significant challenges.  The mathematical theory of polygonal domains is known to be challenging, and it becomes even more complex when dealing with solutions of poor regularity. Additionally, the non-compactness of the resolvent of the associated operator introduces further difficulties in establishing the stability of \eqref{p5-sys2}-\eqref{p5-diff}. Furthermore, the uniform decay of the natural energy observed in \cite{ammari2010stabilization} does not hold in our case due to the implementation of dynamical boundary controls, leading to a different decay behavior.
 \\
 
 In part to answer these objections, we will first prove the well-posedness of the system \eqref{p5-sys2}-\eqref{p5-diff}. Then, we will impose sufficient geometric restrictions on the domains $\Omega_i, i=1,2$ (see Assumptions \ref{anglesomega1} and \ref{anglesomega2} below) to ensure the regularity of the solution to this system and guarantee the strict decay of the energy, i.e., $E(t)$
tends to zero as $t$ goes to infinity. The key question we are interested in, under Assumptions \ref{anglesomega1} and \ref{anglesomega2}, is the energy decay rate of the solution of \eqref{p5-sys2}-\eqref{p5-diff}. For this purpose, we will prove the lack of exponential decay of the system and establish a polynomial energy decay estimate of type $1/t$ for smooth initial data, provided that the exterior boundaries $\Gamma_1,$ $\Gamma_2$ and the interface $I$ satisfy an additional geometrical assumption described below (see Assumption \ref{geometriccon}). The frequency domain approach is the main tool used for the proof of polynomial stability. More precisely,
we combine a contradiction argument with a new multiplier technique to carry out a special analysis of the resolvent.

\subsection{Literature}

Dynamical controls are part of the indirect damping mechanisms proposed by Russell in \cite{russell1993general}, which do not arise from the insertion of damping terms into the original equations describing the mechanical motion, but by coupling those equations to further equations describing other processes in the structure. A significant number of papers dealing with the mathematical theory of dynamical boundary control in elastic structures have appeared. The emphasis is usually placed upon elastic beams modeled by the Euler-Bernoulli beam or Timoshenko beam, including \cite{morgul1992dynamic}, 
 \cite{morgul1992dynamic2} and \cite{morgul1994control} in our list of references. For example, {\"O}. Morg{\"u}l considered an Euler-Bernoulli beam, clamped to a rigid base at one end and free at the other end (see \cite{morgul1992dynamic}). To stabilize the beam vibrations, he proposed dynamic boundary control laws (i.e., dynamic actuators) at the free end of the beam 
 and proved that the beam vibrations decay exponentially to zero. A physical implementation of the dynamic control 
 may be used in pressurized gas tanks with servo-controlled actors 
 as well as in standard mass-spring dampers. Later, {\"O}. Morg{\"u}l studied the stabilization of the clamped-free Timoshenko beam with dynamic boundary control (see \cite{morgul1992dynamic2}). He proved that with the proposed control law, the beam vibrations decay uniformly and exponentially to zero. 
 Later on, {\"O}. Morg{\"u}l studied the motion of a flexible beam, which may model a flexible robot arm clamped to a rigid base at one end and free at the other end (see \cite{morgul1994control}). 
 To suppress the beam vibrations, he applied dynamic boundary control laws 
 to the free end of the beam 
  and proved that the beam vibrations decay asymptotically to zero under some assumptions on the actuator that generates this boundary control. 
  \\

 Let us start now by recalling some previous studies related to 
 Kirchhoff plates and wave equations with dynamical boundary controls, which are relevant to this study. 
 In 2005, A. Wehbe and B. Rao considered a plate equation in an open bounded domain $\Omega \subset \R^2$ with dynamical controls on the boundary $\Gamma$ with partition $\overline{\Gamma}=\overline{\Gamma_0} \cup \overline{\Gamma_1},$ where the dynamical controls are applied on $\Gamma_1$ (see \cite{rao2005polynomial}). They proved that the energy decays polynomially with the rate $1/t$ for all smooth initial data, assuming that the boundary $\Gamma$ satisfies the Multiplier Geometric Control Condition (MGC). This condition requires the existence of $\delta >0$ such that $m(x) \cdot \nu \geq \delta^{-1}$ for all $x \in \Gamma_1$ and $m(x) \cdot \nu \leq 0$ for all $x \in \Gamma_0,$ where $m(x)=x-x_0$ for $x_0$ fixed in $\R^2.$ In 2015, B. Rao et al. considered the stabilization of a wave equation by means of dynamical boundary controls (see \cite{rao2015stability}). They proved that a singular dynamical control applied to a portion of the boundary is sufficient to polynomially stabilize the wave equation with a decay rate $1/t$ under the MGC geometrical condition. Finally, in \cite{articleAkil}, M. Akil et al. studied the stabilization of a Kirchhoff plate equation with time delay added to the dynamical boundary controls. They found that the energy of the system decays polynomially with a decay rate of type $1/t$, provided that an appropriate condition on the delay terms is satisfied.
 \\

On the other hand, transmission systems are mathematical models that arise most naturally in the description of structures that are partly composed of interactive or interconnected materials. Transmission systems can be found in many practical applications, such as spacecraft \cite{biswas1989optimal}, satellite antennas \cite{luo1997shear}, road traffic \cite{lattanzio2011moving}, 
and many other interactive physical processes. 
Such problems have also received attention and there have been fruitful results concerning the control design and stability analysis of the solutions to different types of problems. In 2004, X. Zhang and E. Zuazua considered a  one-dimensional model for a heat-wave system arising from fluid-structure interaction and proved a sharp polynomial decay rate of type $1/t^2$ (see \cite{zhang2004polynomial}). Later, Q. Zhang et al. were concerned with the stabilization of coupled systems of Euler Bernoulli-beam or plate with a heat equation, where the heat equation plays the role of a controller of the whole system (see \cite{zhang2014stabilization}). 
 \\

 Here, we will review only the stabilization for transmission systems of wave and plate equations (or strings and beams), which have received a lot of attention in recent years. For example, K. Ammari and M. Mehrenberger conducted a detailed analysis of the resolvent of a string-beams network, leading to an exponential stability result for the system's energy in their study (see \cite{ammari2011study}).  For a transmission model arising in the control of noise, K. Ammari and S. Nicaise established the exponential stability of coupled wave-plate equations 
 under geometric conditions that lead to a flat interface between the two parts of the domain in which the wave and the plate equations evolve (see \cite{ammari2010stabilization}). 
One notable aspect of the research in \cite{ammari2010stabilization} is that the feedback controls are applied to both the wave and plate equations. Nevertheless, it was presented in \cite{li2018explicit} and \cite{guo2020energy} that different locations of internalized frictional damping bring out different kinds of energy decay rates as the dissipation acts through one equation of some transmission systems. Indeed, 
 Y.-F. Li et al. in \cite{li2018explicit} obtained an optimal polynomial decay rate of type $1/t$ when the frictional damping is actuated in the beam part, whereas an exponential decay of the energy is obtained when the frictional damping is effective in the string part. Furthermore, 
 Y.-P. Guo et al. in \cite{guo2020energy} established a polynomial energy decay of type $1/\sqrt{t}$ for a wave-frictionally damped plate system and an exponential energy decay for a frictionally damped wave-plate system. 

\subsection{Organization of the Paper}
This paper is organized as follows: In Section \ref{Wellposedtrans}, we formulate system \eqref{p5-sys2}-\eqref{p5-diff} into a first-order evolution equation and then deduce the well-posedness property by using semigroup theory. Some regularity results needed in the following sections are rigorously investigated in Section \ref{regularitytrans}. In Section \ref{StrongSta}, a general criteria of Arendt-Batty theorem is used to prove the strong stability of the system in the absence of compactness of the resolvent. We show the lack of exponential stability of \eqref{p5-sys2}-\eqref{p5-diff} in Section \ref{NonexponentialStabilityWave-Plateme}. Finally, Section \ref{polynomialstatrans} is devoted to study the energy decay rate under certain geometric conditions given by Theorem \ref{PolynomialStability}. We prove that the energy of our system has a polynomial decay rate of type $1/t.$\\

Let us finish this introduction with some notations used in the remainder of the paper: The $L^2(\Omega)-$norm will be denoted by $\norm{\cdot}_{L^2(\Omega)}.$ The usual norm and semi-norm of the Sobolev space $H^s(\Omega),$ $s\geq 0,$ are denoted by $\norm{\cdot}_{H^s(\Omega)}$ and $|\cdot|_{H^s(\Omega)},$ respectively. By $a\lesssim b,$ we mean that there exists a constant $\mathcal{C} >0$ independent of $a,$ $b$ and the natural parameter $n,$ such that $a \leq \mathcal{C} b.$  

\section{Well-posedness of the Problem}\label{Wellposedtrans}

This section is devoted to study the well-posedness property of system \eqref{p5-sys2}-\eqref{p5-diff} using the semigroup approach. We first introduce the following spaces:
$$H^1_{\ast}(\Omega_1)=\left\{u \in H^1(\Omega_1) \ | \ \  \int_{\Omega_1}u dx =0  \right\},$$
$$H^2_{\ast}(\Omega_2)=\left\{w \in H^2(\Omega_2) \ | \ \  \int_{\Omega_2}w dx=\int_{\Omega_2} \nabla w dx =0  \right\}$$
and the energy space $\HH$ by
$$
\HH=\left\{(u,v,\eta,w,z,\xi,\zeta) \in H^1_{\ast}(\Omega_1)\times L^2(\Omega_1)\times L^2(\Gamma_1) \times H^2_{\ast} (\Omega_2)\times L^2(\Omega_2) \times L^2(\Gamma_2) \times L^2(\Gamma_2) \ | \ \  u=w\  \ \text{on}  \ \ I \right\},
$$
which is endowed with the following usual inner product
\begin{equation}\label{p5-norm}
	\left(U,\tilde{U}\right)_{\HH}=\int_{\Omega_1}\left(\nabla u \cdot \nabla\overline{\tilde{u}}+v \overline{\tilde{v}}\right)dx + \int_{\Gamma_1} \eta \overline{\tilde{\eta}} d \Gamma+a(w,\overline{\tilde{w}})+\int_{\Omega_2} z\overline{\tilde{z}}dx + \int_{\Gamma_2} \left(\xi \overline{\tilde{\xi}}+ \zeta \overline{\tilde{\zeta}}\right) d \Gamma,
\end{equation}
where  $U=(u,v,\eta,w,z,\xi,\zeta) $, $\tilde{U} =(\tilde{u} ,\tilde{v} ,\tilde{\eta} ,\tilde{w} ,\tilde{z} ,\tilde{\xi} ,\tilde{\zeta})\in\HH$. We next define the linear unbounded  operator $\AA:D(\AA)\subset \HH\longmapsto \HH$  by
\begin{equation}\label{p5-domain}
	D(\AA)=\left\{\begin{array}{ll}\vspace{0.25cm}
		U=(u,v,\eta,w,z,\xi,\zeta)  \in \HH \ | \  \Delta u \in L^2(\Omega_1), \ \ v\in H^1_{\ast}(\Omega_1), \ \ \Delta^2 w \in L^2(\Omega_2), \ \ z \in  H^2_{\ast}(\Omega_2), \\\vspace{0.25cm} \partial_{\nu_1} u + \eta =0 \ \ \text{on} \ \ \Gamma_1, \ \  \mathcal{B}_1 w+ \xi =0 \ \ \text{on} \ \ \Gamma_2, \ \ \mathcal{B}_2w- \zeta =0 \ \ \text{on} \ \ \Gamma_2, \\\vspace{0.25cm}
v=z, \ \ \mathcal{B}_1w=0  \ \ \text{and} \ \ \mathcal{B}_2w=\partial_{\nu_1}u \ \ \text{on} \ \ I
	\end{array}\right\} 
\end{equation}and 
\begin{equation}\label{p5-opA}
	\AA U=\big(v, \Delta u, \gamma_1(v)-\eta, z, -\Delta^2 w, \gamma_{2,2}(z)-\xi, \gamma_{2,1}(z)-\zeta\big), \ \ \ \ \forall U=(u,v,\eta,w,z,\xi,\zeta)\in D(\AA),
\end{equation}
 where $\gamma_1:H^1(\Omega_1)\longmapsto L^2(\Gamma_1)$ and $\gamma_2:H^2(\Omega_2)\longmapsto L^2(\Gamma_2)\times L^2(\Gamma_2)$ are the trace operators such that $\gamma_1(\varphi)=\varphi|_{\Gamma_1}$ and $ \gamma_2(\psi)=\left(\gamma_{2,1}(\psi),\gamma_{2,2}(\psi)\right)=\left(\psi|_{\Gamma_2},\partial_{\nu_2} \psi|_{\Gamma_2}\right).$ 
\noindent Now, setting $U=(u, u_t ,\eta ,w, w_t , \xi, \zeta)$ as the state of system \eqref{p5-sys2}-\eqref{p5-diff}, we rewrite the problem into a first-order evolution equation 
\begin{equation}\label{p5-firstevo}
    \begin{cases}
    U_t =\AA U, \\
    U(0)=U_0,
    \end{cases}
\end{equation}
where  $U_0 =(u_0 ,u_1 ,\eta_0, w_0 ,w_1 ,\xi_0,\zeta_0) \in \HH$.
\begin{rk}The equation $$
\int_{\Omega_2} \nabla w \, dx = 0
$$ implies that $$
\int_{\Omega_2} (w_{x_1}, w_{x_2}) \, dx = 0,
$$ which further implies that both $$
\int_{\Omega_2} w_{x_1} \, dx = 0 \quad \text{and} \quad \int_{\Omega_2} w_{x_2} \, dx = 0.
$$ \end{rk} 
\begin{rk}\label{remarknormagg}
From \eqref{p5-1.3} and the fact that $2\Re \left(w_{x_1x_1}\overline{w}_{x_2x_2} \right)=|w_{x_1x_1}+w_{x_2x_2}|^2-|w_{x_1x_1}|^2-|w_{x_2x_2}|^2$, we remark that	\begin{equation*}\begin{array}{lll}
a(w,\overline{w})&=&\displaystyle \int_{\Omega_2} \left[|w_{x_1x_1}|^2+|w_{x_2x_2}|^2+\mu\left(w_{x_1x_1} \overline{w}_{x_2x_2}+w_{x_2 x_2} \overline{w}_{x_1 x_1}\right) +2(1-\mu)|w_{x_1 x_2}|^2\right]dx\vspace{0.25cm}\\
&=&\displaystyle \int_{\Omega_2} \left[|w_{x_1 x_1}|^2+|w_{x_2 x_2}|^2+2\mu \Re \left(w_{x_1 x_1} \overline{w}_{x_2 x_2}\right) +2(1-\mu)|w_{x_1 x_2}|^2\right] dx\vspace{0.25cm}\\
&=&\displaystyle \int_{\Omega_2} \left[(1-\mu)|w_{x_1 x_1}|^2+(1-\mu)|w_{x_2 x_2}|^2+\mu |w_{x_1 x_1}+w_{x_2 x_2}|^2 +2(1-\mu)|w_{x_1 x_2}|^2\right] dx\geq 0.
\end{array}\end{equation*}
\end{rk}

\begin{rk}
We present a brief description of the function spaces $H^1_{\ast}(\Omega_1)$ and $H^2_{\ast}(\Omega_2)$, which play a pivotal role in establishing a crucial result. Specifically, we demonstrate that if $\left(U, U \right)_{\mathcal{H}}=0$ of an element $U=\left(u,v,\eta,w,z,\xi,\zeta\right)$ in the function space $\mathcal{H}$ equals zero, then $U$ must be identically zero throughout the entire domain (i.e., $U=0$), where $\left\Vert U \right\Vert_\mathcal{H}^2$ serves as a norm on $\mathcal{H}$. To illustrate this, we consider $\left(U, U \right)_{\mathcal{H}}=0$. We then observe that the following equality holds:
\begin{equation*}
\int_{\Omega_1} \left(|\nabla u |^2+|v|^2 \right)dx + \int_{\Gamma_1} |\eta|^2 d \Gamma+a(w,\overline{w})+\int_{\Omega_2} |z|^2dx + \int_{\Gamma_2} \left(|\xi|^2 + |\zeta|^2 \right) d \Gamma=0.
\end{equation*}
From this, we deduce the following:
\begin{enumerate}
    \item 
$\int_{\Omega_1} |\nabla u|^2 dx +a(w,\overline{w})=0$, which implies that $\nabla u=0$ in $\Omega_1$ and $w_{x_1x_1}=w_{x_2x_2}=w_{x_1 x_2}=0$ in $\Omega_2$.\\

\item $\eta=0$ on $\Gamma_1$, $\xi=\zeta=0$ on $\Gamma_2$, $v=0$ in $\Omega_1$, and $z=0$ in $\Omega_2$. \end{enumerate}
To establish the final results, we make use of the following:
\begin{enumerate}
    \item From the condition $\int_{\Omega_1}u dx =0$, we deduce that $u=0$ in $\Omega_1$.\\

\item By applying Remark \ref{remarknormagg}, we infer that $w_{x_1x_1}=w_{x_2x_2}=w_{x_1x_2}=0$, leading to $w_{x_1}=c_1$ and $w_{x_2}=c_2$, where $c_1$ and $c_2$ are constants. Uzing the fact that $\int_{\Omega_2} \nabla w dx =0$, we further conclude that $w_{x_1}=w_{x_2}=0$. Moreover, $\int_{\Omega_2} w dx=0$ implies that $w=0$ in $\Omega_2$.
\end{enumerate}
This implies that both $u$ and $w$ are identically zero in their respective domains, i.e., $u=0$ in $\Omega_1$ and $w=0$ in $\Omega_2$. As a consequence, we arrive at the final result
\begin{equation*}
U=0.
\end{equation*}Thus, the spaces $H^1_{\ast}(\Omega_1)$ and $H^2_{\ast}(\Omega_2)$ play a crucial role in establishing this significant result.\end{rk}

\begin{rk}When replacing $H^1_{\ast}(\Omega_1)$ and $H^2_{\ast}(\Omega_2)$ with more general spaces, such as $H^1(\Omega_1)$ and $H^2(\Omega_2)$, Theorem \ref{PolynomialStability} may no longer hold. The original theorem's conclusion, derived from the equation $\int_{\Omega_1} |\nabla u|^2dx + a(w,\overline{w}) = 0$, where $(u,w) \in H^1_{\ast}(\Omega_1) \times H^2_{\ast}(\Omega_2)$, may not necessarily imply $u=0$ in $\Omega_1$ and $w=0$ in $\Omega_2$ when considering the broader function spaces. \end{rk}

For further purposes, set the Hilbert space $\mathsf{H}$ by 
$$\mathsf{H}=\big\{(f,g)\in H^1_{\ast}(\Omega_1)\times H^2_{\ast} (\Omega_2) \ | \ f=g \ \ \text{
on }\ \ I \big\},$$ equipped with the norm \begin{equation}\label{starsnorm}
	\left\Vert(f,g) \right\Vert_\mathsf{H}^2=\left\Vert \nabla f \right\Vert^2_{L^2(\Omega_1)}+a(g,\overline{g}).
\end{equation}
Note that using the compactness injection from $H^1(\Omega_1)$ into $L^2(\Omega_1)$ and from $H^2(\Omega_2)$ into $H^1(\Omega_2)$, we can easily show that $\left\Vert \nabla f \right\Vert^2_{L^2(\Omega_1)}$ is equivalent to the usual norm of $H^1(\Omega_1)$ on $H^1_{\ast}(\Omega_1)$ and $a(g,\overline{g})$ is equivalent to the usual norm of $H^2 (\Omega_2)$ on $H^2_{\ast} (\Omega_2).$


\begin{prop}\label{p5-aplusbmdissip}
		The unbounded linear operator $\AA$ is m-dissipative in the energy space $\HH$.
\end{prop}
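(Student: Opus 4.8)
The plan is to apply the Lumer--Phillips theorem, so it suffices to prove that $\AA$ is dissipative and that $R(\lambda I-\AA)=\HH$ for some $\lambda>0$ (I will treat an arbitrary $\lambda>0$).

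\textbf{Dissipativity.} For $U=(u,v,\eta,w,z,\xi,\zeta)\in D(\AA)$ I would compute $\Re\left(\AA U,U\right)_{\HH}$ directly from \eqref{p5-norm}--\eqref{p5-opA}. Applying the generalized Green formula in $\Omega_1$ (legitimate since $\Delta u\in L^2(\Omega_1)$ and $u\in H^1(\Omega_1)$) to $\int_{\Omega_1}\Delta u\,\overline v\,dx$, and the Green formula \eqref{GF} (extended by density to $w$ with $\Delta^2 w\in L^2(\Omega_2)$, $w\in H^2(\Omega_2)$) to $\int_{\Omega_2}(-\Delta^2 w)\overline z\,dx$ after noting that $a(\cdot,\overline{\cdot})$ is Hermitian so that $\Re\, a(z,\overline w)=\Re\, a(w,\overline z)$, one is left with boundary integrals over $\partial\Omega_1=\Gamma_1\cup I$ and $\partial\Omega_2=\Gamma_2\cup I$. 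On $I$ the contributions cancel because $v=z$, $\mathcal B_1 w=0$ and $\mathcal B_2 w=\partial_{\nu_1}u$; on $\Gamma_1$ (resp. $\Gamma_2$) the terms pairing the trace $\gamma_1(v)$ with $\eta$ (resp. $\gamma_{2,2}(z),\gamma_{2,1}(z)$ with $\xi,\zeta$) cancel against the boundary terms produced by the third (resp. sixth and seventh) components of $\AA U$, once $\partial_{\nu_1}u=-\eta$ on $\Gamma_1$ and $\mathcal B_1 w=-\xi$, $\mathcal B_2 w=\zeta$ on $\Gamma_2$ are used. What survives is
\[
\Re\left(\AA U,U\right)_{\HH}=-\int_{\Gamma_1}|\eta|^2\,d\Gamma-\int_{\Gamma_2}\left(|\xi|^2+|\zeta|^2\right)d\Gamma\le 0,
\]
in agreement with Lemma \ref{p5-lemenergy}.

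\textbf{Maximality.} Fix $\lambda>0$ and $F=(f_1,\dots,f_7)\in\HH$; I look for $U\in D(\AA)$ solving $(\lambda I-\AA)U=F$. The first and fourth scalar equations give $v=\lambda u-f_1$ and $z=\lambda w-f_4$ (hence $v=z$ on $I$, since $f_1=f_4$ on $I$ because $F\in\HH$), and the third, sixth and seventh equations express $\eta,\xi,\zeta$ through the traces of $v,z$ and the data. Substituting reduces the problem to the elliptic transmission system $\lambda^2 u-\Delta u=f_2+\lambda f_1$ in $\Omega_1$, $\lambda^2 w+\Delta^2 w=f_5+\lambda f_4$ in $\Omega_2$, together with Robin-type conditions on $\Gamma_1$ and $\Gamma_2$ (obtained by eliminating $\eta,\xi,\zeta$ from $\partial_{\nu_1}u+\eta=0$, $\mathcal B_1 w+\xi=0$, $\mathcal B_2 w-\zeta=0$) and the conditions $u=w$, $\mathcal B_1 w=0$, $\mathcal B_2 w=\partial_{\nu_1}u$ on $I$. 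Multiplying by $(\varphi,\psi)\in\mathsf H$, integrating by parts (using \eqref{GF} on the plate term) and observing that the interface integrals cancel exactly as in the dissipativity computation, one obtains a variational problem $B\big((u,w),(\varphi,\psi)\big)=L(\varphi,\psi)$ on the Hilbert space $\mathsf H$, where $B$ is such that
\[
\Re\, B\big((u,w),(u,w)\big)=\norm{\nabla u}_{L^2(\Omega_1)}^2+a(w,\overline w)+\lambda^2\norm{u}_{L^2(\Omega_1)}^2+\lambda^2\norm{w}_{L^2(\Omega_2)}^2+\tfrac{\lambda}{\lambda+1}\Big(\norm{u}_{L^2(\Gamma_1)}^2+\norm{\partial_{\nu_2}w}_{L^2(\Gamma_2)}^2+\norm{w}_{L^2(\Gamma_2)}^2\Big)\ge\norm{(u,w)}_{\mathsf H}^2.
\]
Thus $B$ is coercive; it is bounded on $\mathsf H$ by the trace theorem together with the norm equivalences recalled after \eqref{starsnorm}, and $L$ is a bounded antilinear functional on $\mathsf H$ for the same reasons. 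Lax--Milgram then yields a unique $(u,w)\in\mathsf H$.

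\textbf{Regularity recovery and conclusion.} Taking $\varphi\in\mathcal D(\Omega_1)$, $\psi=0$ (and symmetrically) in the variational identity recovers the two PDEs in $\mathcal D'$, so $\Delta u\in L^2(\Omega_1)$ and $\Delta^2 w\in L^2(\Omega_2)$; then $v:=\lambda u-f_1\in H^1_\ast(\Omega_1)$, $z:=\lambda w-f_4\in H^2_\ast(\Omega_2)$, and $\eta,\xi,\zeta$ defined as above lie in the correct trace spaces. Testing against general $(\varphi,\psi)\in\mathsf H$ and invoking the generalized Green formulas then identifies the boundary conditions on $\Gamma_1,\Gamma_2$ and the transmission conditions on $I$, so $U=(u,v,\eta,w,z,\xi,\zeta)\in D(\AA)$ and $(\lambda I-\AA)U=F$. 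Together with dissipativity, this gives m-dissipativity via Lumer--Phillips. The step I expect to be most delicate is this last one: since $\Omega_1,\Omega_2$ are merely Lipschitz (polygonal), $u$ need not be $H^2$ nor $w$ be $H^4$ near the corners, so $\partial_{\nu_1}u$, $\mathcal B_1 w$, $\mathcal B_2 w$ and the formula \eqref{GF} must be interpreted in suitable negative-order trace spaces via density, and recovering the natural (Robin and transmission) boundary conditions from the weak formulation requires a careful choice of localized test functions.
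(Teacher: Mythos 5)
Your proposal is correct and follows essentially the same route as the paper: the same direct computation of $\Re(\AA U,U)_{\HH}$ with cancellation of the interface terms for dissipativity, and the same elimination of $(v,z,\eta,\xi,\zeta)$ followed by a coercive variational problem on $\mathsf{H}$, Lax--Milgram, and recovery of the boundary and transmission conditions via density of the trace spaces for maximality. The only cosmetic difference is that you treat a general $\lambda>0$ where the paper fixes $\lambda=1$, which changes nothing of substance.
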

\begin{proof}
	First, let $U=(u, v ,\eta ,w, z , \xi, \zeta)\in D(\AA)$. Using \eqref{p5-norm} and \eqref{p5-opA}, we have
\begin{equation*}\begin{split} \Re \left(\AA U,U\right)_\HH =\Re  \Bigg\{\int_{\Omega_1} \nabla v \cdot \nabla \overline{u}dx+ \int_{\Omega_1} \Delta u \overline{v}dx+ \int_{\Gamma_1}(v-\eta) \overline{\eta} d\Gamma+a(z,\overline{w})-\int_{\Omega_2} \Delta^2w \overline{z} dx\\+\int_{\Gamma_2}(\partial_{\nu_2} z-\xi) \overline{\xi}d\Gamma +\int_{\Gamma_2}(z-\zeta) \overline{\zeta}d\Gamma\Bigg\}.\end{split}\end{equation*}
We can proceed by integrating by parts and utilizing Green's formula \eqref{GF}. Given that $U \in D(\mathcal{A}),$ we have
\begin{equation}\label{p5-Adissipative}
	\Re \left( \AA U,U\right)_\HH =-\int_{\Gamma_1}|\eta|^2 d\Gamma - \int_{\Gamma_2}|\xi|^2 d\Gamma -\int_{\Gamma_2}|\zeta|^2 d\Gamma\leq 0,
	\end{equation}
		which implies that $\AA$ is dissipative. Now, let us prove that  $\AA$ is maximal. For this aim, let $F=(f^1 ,g^1,h^1,f^2,g^2,h^2,h^3)\\ \in \HH$, we look for $U=(u, v ,\eta ,w, z , \xi, \zeta) \in D(\AA)$ unique solution  of
		 \begin{equation}\label{p5-i-a=f}
		U-\AA U=F.
	\end{equation}
Equivalently, we have the following system
	\begin{eqnarray}
		u-v&=&f^1, \label{p5-f1} \ \ \text{in} \ \   \Omega_1 ,\\
		v-\Delta u&=&g^1,\label{p5-g1} \ \ \text{in} \ \   \Omega_1,\\
		\eta-v+\eta&=&h^1, \ \ \text{on} \ \   \Gamma_1, \label{p5-h1}\\
		w-z &=&f^2, \ \ \text{in} \ \   \Omega_2, \label{p5-f2}\\
		z+\Delta^2 w &=&g^2, \ \ \text{in} \ \   \Omega_2, \label{p5-g2}\\
		\xi-\partial_{\nu_2} z+\xi&=&h^2, \ \ \text{on} \ \   \Gamma_2, \label{p5-h2}\\
		\zeta-z+\zeta&=&h^3, \ \ \text{on} \ \   \Gamma_2, \label{p5-h3}
	\end{eqnarray}
with the following transmission and boundary conditions
\begin{equation}\label{p5-bc}
\left\{\begin{array}{lll}
\displaystyle u=w, \ \ \mathcal{B}_1 w=0, \ \ \mathcal{B}_2w=\partial_{\nu_1}u, & \text{on} \ \ I ,\vspace{0.15cm}\\ 
	\displaystyle \partial_{\nu_1} u + \eta =0, & \text{on} \ \ \Gamma_1 ,\vspace{0.15cm}\\ 
	\displaystyle \mathcal{B}_1 w+ \xi =0, & \text{on} \ \ \Gamma_2, \vspace{0.15cm}\\
	\displaystyle \mathcal{B}_2 w- \zeta =0, & \text{on} \ \ \Gamma_2.
\end{array}\right.
\end{equation}
It follows from \eqref{p5-f1}, \eqref{p5-h1}, \eqref{p5-f2}, \eqref{p5-h2} and \eqref{p5-h3} that 
\begin{equation}
	v=u-f^1 \ \ \text{in} \ \   \Omega_1 , \ \ 2 \eta= v+h^1 \ \ \text{on} \ \   \Gamma_1,  \ \ z=w-f^2 \ \ \text{in} \ \   \Omega_2, \ \ 2 \xi=\partial_{\nu_2} z+h^2  \ \ \text{on} \ \ \Gamma_2  \ \ \text{and} \ \ 2 \zeta=z+h^3  \ \ \text{on} \ \   \Gamma_2.
\end{equation}
By elimination of $v, \ z, \ \eta, \ \xi$ and $\zeta$ in \eqref{p5-g1}, \eqref{p5-g2} and \eqref{p5-bc}, we find that $u$ and $w$ satisfy the following system:
\begin{equation}\label{p5-strongdiss}
\left\{	\begin{array}{llll}
\displaystyle  u-\Delta u =f^1+g^1, & \text{in} \ \   \Omega_1,\vspace{0.15cm}\\
\displaystyle w+\Delta^2 w =f^2+g^2, & \text{in} \ \   \Omega_2,\vspace{0.15cm}\\
\displaystyle u=w, \ \ \mathcal{B}_1 w=0, \ \ \mathcal{B}_2w=\partial_{\nu_1}u, & \text{on} \ \ I,\vspace{0.15cm}\\
\displaystyle 2 \partial_{\nu_1} u + u =f^1-h^1, & \text{on} \ \ \Gamma_1,\vspace{0.15cm}\\ 
\displaystyle 2 \mathcal{B}_1 w+ \partial_{\nu_2} w =\partial_{\nu_2} f^2-h^2, & \text{on} \ \ \Gamma_2 \vspace{0.15cm},\\
\displaystyle 2 \mathcal{B}_2w- w=h^3-f^2, & \text{on} \ \ \Gamma_2.
\end{array}
\right.
\end{equation}
Let $\phi=(\varphi,\psi)\in \mathsf{H}.$ Multiplying the first equation of \eqref{p5-strongdiss} by $\overline{\varphi}$ and integrating over $\Omega_1$, multiplying the second equation of \eqref{p5-strongdiss} by  $\overline{\psi}$ and integrating over $\Omega_2$, then using Green's formula, we get 
\begin{equation}\label{p5-disseq1}
	\int_{\Omega_1} u \overline{\varphi}dx+	\int_{\Omega_1 } \nabla u \cdot \nabla \overline{\varphi}dx+ \frac{1}{2} \int_{\Gamma_1} u \overline{\varphi} d \Gamma-\int_I \partial_{\nu_1}u \overline{\varphi}d \Gamma=\int_{\Omega_1} F^1\overline{\varphi}dx+\frac{1}{2} \int_{\Gamma_1 }F^2 \overline{\varphi}d\Gamma,  
\end{equation}
\begin{equation}\label{p5-disseq2}
\begin{split}
	&\int_{\Omega_2} w \overline{\psi}dx+ a(w,\overline{\psi})+ \frac{1}{2} \int_{\Gamma_2} \left(\partial_{\nu_2} w \partial_{\nu_2} \overline{\psi} + w \overline{\psi} \right)d \Gamma-\int_I \left(\mathcal{B}_1 w \partial_{\nu_2} \overline{\psi} - \mathcal{B}_2 w \overline{\psi}\right)d \Gamma\\
	&\hspace{1cm}=\int_{\Omega_2} F^3\overline{\psi}dx+\frac{1}{2} \int_{\Gamma_2 }F^4 \partial_{\nu_2} \overline{\psi}d\Gamma+\frac{1}{2} \int_{\Gamma_2 }F^5  \overline{\psi}d\Gamma,  
\end{split}
\end{equation}
where $F^1=f^1+g^1, \ F^2=f^1-h^1, \ F^3=f^2+g^2, \  F^4=\partial_{\nu_2} f^2-h^2$ and $F^5=f^2-h^3.$
Adding the resulting equations and using the transmission conditions, we obtain a variational formulation of \eqref{p5-strongdiss}: Find $(u,w) \in \mathsf{H}$ such that
\begin{equation}\label{p5-vf11}
 b((u,w),(\varphi,\psi))=l(\varphi,\psi), \ \ \ \  \forall (\varphi,\psi)\in \mathsf{H},
\end{equation}
where
$$
b((u,w),(\varphi,\psi))=\int_{\Omega_1} u \overline{\varphi}dx+	\int_{\Omega_1 } \nabla u \cdot \nabla \overline{\varphi}dx+ \frac{1}{2} \int_{\Gamma_1} u \overline{\varphi} d \Gamma+\int_{\Omega_2} w \overline{\psi}dx+ a(w,\overline{\psi})+ \frac{1}{2} \int_{\Gamma_2} \left(\partial_{\nu_2} w \partial_{\nu_2} \overline{\psi} + w \overline{\psi} \right)d \Gamma
$$
and
$$
l(\varphi,\psi)= \int_{\Omega_1} F^1\overline{\varphi}dx+ \frac{1}{2} \int_{\Gamma_1 }F^2 \overline{\varphi}d\Gamma+ \int_{\Omega_2} F^3\overline{\psi}dx+\frac{1}{2} \int_{\Gamma_2 }F^4 \partial_{\nu_2} \overline{\psi}d\Gamma+\frac{1}{2} \int_{\Gamma_2 }F^5  \overline{\psi}d\Gamma.
$$
	It is easy to see that  $b$ is a sesquilinear, continuous, and coercive form on the space $\mathsf{H}\times \mathsf{H}$ and $l$ is an antilinear and continuous form on $\mathsf{H}$. Then, it follows by Lax-Milgram's theorem that \eqref{p5-vf11} admits a unique solution $(u,w)\in \mathsf{H} $. By choosing $\varphi \in C^{\infty}_0 (\Omega_1),$ $\psi=0$ in \eqref{p5-vf11} and applying Green's formula, we have 
	$$\int_{\Omega_1} \left( u-\Delta u \right) \overline{\varphi}dx= \int_{\Omega_1} \left( f^1+g^1 \right) \overline{\varphi}dx, \ \ \ \  \forall \varphi\in C^{\infty}_0 (\Omega_1),$$ which implies that the first equation of \eqref{p5-strongdiss} holds in the sense of distributions in $\Omega_1$ and hence is satisfied in $L^2(\Omega_1).$ As $u-f^1-g^1$ belongs to $L^2(\Omega_1),$ the same holds for $\Delta u,$ i.e., $\Delta u \in L^2(\Omega_1).$ In the same way, choosing $\varphi=0$ and $\psi \in C^{\infty}_0 (\Omega_2)$ in \eqref{p5-vf11}, we see that the second equation of \eqref{p5-strongdiss} holds as equality in $L^2(\Omega_2)$ and therefore $\Delta^2 w \in L^2(\Omega_2).$ Now, let us define the spaces 
	$$H^1_{\ast,I}(\Omega_1)=\left\{f \in H^1_\ast(\Omega_1) \ | \ \  f =0\  \ \text{on}  \ \ I \right\},$$ and 
$$H^2_{\ast,I}(\Omega_2)=\left\{f \in H^2_\ast(\Omega_2) \ | \ \  f=\partial_{\nu_2} f  =0\  \ \text{on}  \ \ I \right\}.$$ By taking $\varphi \in H^1_{\ast,I}(\Omega_1),$ $\psi=0$ and applying Green's formula in \eqref{p5-vf11}, we calculate
$$\int_{\Gamma_1} ( 2 \partial_{\nu_1} u + u - f^1+h^1 ) \overline{\varphi}d \Gamma=0, \ \ \ \  \forall \varphi\in H^{\frac{1}{2}}_{\ast,I}(\Gamma_1),$$ where $H^{\frac{1}{2}}_{\ast,I}(\Gamma_1)$ is the corresponding trace space of $H^1_{\ast,I}(\Omega_1)$ through the operator $\gamma_1.$ This implies that $\\2 \partial_{\nu_1} u + u - f^1+h^1 \in (H^{\frac{1}{2}}_{\ast,I}(\Gamma_1))^{\perp}$ in $L^2(\Gamma_1),$ and since $H^{\frac{1}{2}}_{\ast,I}(\Gamma_1)$ is dense in $L^2(\Gamma_1),$ we deduce that $u$ satisfies
$$2 \partial_{\nu_1} u + u =f^1-h^1, \ \ \ \  \text{on} \ \ \Gamma_1.$$ Similarly, by taking $\varphi=0,$ $\psi \in H^2_{\ast,I}(\Omega_2)$ and applying Green's formula in \eqref{p5-vf11}, we deduce that $w$ satisfies 
$$2 \mathcal{B}_1 w+ \partial_{\nu_2} w =\partial_{\nu_2} f^2-h^2, \ \ \ \  \text{on} \ \ \Gamma_2,$$
as well as 
	$$2 \mathcal{B}_2w- w=h^3-f^2, \ \ \ \  \text{on} \ \ \Gamma_2.$$ Coming back to \eqref{p5-vf11} and again applying Green's formula, we calculate
	$$\int_{I} \mathcal{B}_1 w \partial_{\nu_2}\overline{\psi}d \Gamma - \int_{I} \left( \mathcal{B}_2w - \partial_{\nu_1} u \right) \overline{\psi}d \Gamma=0, \ \ \ \  \forall \psi \in H^{\frac{3}{2}}_{\ast}(I),$$ where $H^{\frac{3}{2}}_{\ast}(I)$ is the corresponding trace space of $H^2_\ast(\Omega_2)$ through the operator $\psi \longmapsto \psi|_{I}.$ Due to the density of $H^{\frac{3}{2}}_{\ast}(I)$ into $L^2(I),$ we can easily check that $u$ and $w$ satisfy the transmission conditions of \eqref{p5-strongdiss}. Therefore, we deduce that system \eqref{p5-strongdiss} has a unique solution $(u,w) \in \mathsf{H}$ such that $\Delta u \in L^2(\Omega_1)$ and $\Delta^2 w \in L^2(\Omega_2).$ Finally, by setting 
	\begin{equation*}
v:=u-f^1, \ \eta:=-\partial_{\nu_1} u, \ z:=w-f^2, \ \xi:=-\mathcal{B}_1 w \ \ \text{and} \ \ \zeta:=\mathcal{B}_2 w,	
\end{equation*} we conclude that there exists a unique $U=(u,v,\eta,w,z,\xi,\zeta) \in D(\AA)$ solution of equation \eqref{p5-i-a=f} and thus the operator $\mathcal{A}$ is m-dissipative on ${\mathcal{H}}$. The proof is thus complete.
	\end{proof}\\\linebreak
According to Lumer-Philips theorem (see \cite{pazy2012semigroups}), Proposition \ref{p5-aplusbmdissip} implies that the operator $\AA$ generates a $C_{0}$-semigroup of contractions $\left(e^{t\AA}\right)_{t \geq 0}$ in $\HH$ which gives the well-posedness of \eqref{p5-firstevo}. Then, we have the following result:
\begin{Thm}
	For all $U_0 \in \HH$,  system \eqref{p5-firstevo} admits a unique weak solution $$U(t) \in C^0 (\R^+ ,\HH).
		$$ Moreover, if $U_0 \in D(\AA)$, then the system \eqref{p5-firstevo} admits a unique strong solution $$U(t) \in C^0 (\R^+ ,D(\AA))\cap C^1 (\R^+ ,\HH).$$
\end{Thm}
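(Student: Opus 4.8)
The plan is to invoke the standard semigroup existence theorem once the generation property is in hand. Since Proposition~\ref{p5-aplusbmdissip} establishes that $\AA$ is m-dissipative on the Hilbert space $\HH$, the Lumer--Phillips theorem applies directly and yields that $\AA$ generates a $C_0$-semigroup of contractions $\left(e^{t\AA}\right)_{t\geq 0}$ on $\HH$. The well-posedness statement is then an immediate consequence of the classical Hille--Yosida / semigroup theory applied to the abstract Cauchy problem \eqref{p5-firstevo}.

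First I would recall the explicit dictionary between semigroup generation and solvability of \eqref{p5-firstevo}: for $U_0\in\HH$, the function $U(t)=e^{t\AA}U_0$ is the unique mild (weak) solution and, because the semigroup is strongly continuous, $U\in C^0(\R^+,\HH)$. Second, for $U_0\in D(\AA)$, the standard regularity result for $C_0$-semigroups (see \cite{pazy2012semigroups}) gives that $t\mapsto e^{t\AA}U_0$ is continuously differentiable with values in $\HH$, remains in $D(\AA)$ for all $t\geq 0$, and satisfies $\frac{d}{dt}e^{t\AA}U_0=\AA e^{t\AA}U_0$; equipping $D(\AA)$ with the graph norm, this reads $U\in C^0(\R^+,D(\AA))\cap C^1(\R^+,\HH)$, which is exactly the strong-solution claim. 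Uniqueness in both cases follows from the uniqueness part of the Hille--Yosida theorem (or directly: two solutions have difference solving $V_t=\AA V$, $V(0)=0$, hence $V\equiv 0$ by dissipativity, since $\frac{d}{dt}\|V\|_\HH^2 = 2\Re(\AA V,V)_\HH\leq 0$ by \eqref{p5-Adissipative}).

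There is essentially no obstacle here: the entire substance of the well-posedness result has already been absorbed into Proposition~\ref{p5-aplusbmdissip}, whose proof carries out the dissipativity estimate \eqref{p5-Adissipative} and the maximality argument via Lax--Milgram. The only thing to be careful about is that the energy space $\HH$ is genuinely a Hilbert space with the inner product \eqref{p5-norm} --- this is guaranteed by the earlier remarks showing that $\left(U,U\right)_\HH=0$ forces $U=0$, using the zero-mean constraints built into $H^1_\ast(\Omega_1)$ and $H^2_\ast(\Omega_2)$ --- so that Lumer--Phillips is legitimately applicable. With that in place, the theorem is a one-line citation of \cite{pazy2012semigroups}.

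\begin{proof}
By Proposition~\ref{p5-aplusbmdissip}, the operator $\AA$ is m-dissipative on the Hilbert space $\HH$. Hence, by the Lumer--Phillips theorem (see \cite{pazy2012semigroups}), $\AA$ generates a $C_0$-semigroup of contractions $\left(e^{t\AA}\right)_{t\geq 0}$ on $\HH$. Consequently, for every $U_0\in\HH$, the function $U(t)=e^{t\AA}U_0$ is the unique weak solution of \eqref{p5-firstevo} and belongs to $C^0(\R^+,\HH)$. Moreover, if $U_0\in D(\AA)$, the classical regularity theory for $C_0$-semigroups (see \cite[Theorem~1.2.4 and Corollary~1.2.5]{pazy2012semigroups}) ensures that $e^{t\AA}U_0\in D(\AA)$ for all $t\geq 0$, that $t\mapsto e^{t\AA}U_0$ is continuously differentiable from $\R^+$ into $\HH$, and that it satisfies $U_t=\AA U$ pointwise; equipping $D(\AA)$ with the graph norm, this means $U\in C^0(\R^+,D(\AA))\cap C^1(\R^+,\HH)$. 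Uniqueness in both cases follows from the dissipativity of $\AA$: if $V$ solves $V_t=\AA V$ with $V(0)=0$, then $\frac{d}{dt}\|V(t)\|_\HH^2=2\Re\left(\AA V(t),V(t)\right)_\HH\leq 0$ by \eqref{p5-Adissipative}, so $V\equiv 0$. The proof is complete.
\end{proof}
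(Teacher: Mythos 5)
Your proposal is correct and follows exactly the route the paper takes: Proposition~\ref{p5-aplusbmdissip} gives m-dissipativity, the Lumer--Phillips theorem yields the contraction semigroup $\left(e^{t\AA}\right)_{t\geq 0}$, and the stated well-posedness is then the standard consequence from \cite{pazy2012semigroups}. The extra details you supply (graph-norm regularity, the dissipativity-based uniqueness argument) are a correct elaboration of what the paper leaves implicit.
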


\begin{rk}
	 Note that Proposition \ref{p5-aplusbmdissip} remains true and system \eqref{p5-firstevo} admits a unique solution even if the domains $\Omega_1$ and $\Omega_2$  have no dynamical controls at their exterior boundaries $\Gamma_1$ and $\Gamma_2.$
\end{rk}

\section{Some Regularity Results}\label{regularitytrans}
 
 In the following sections, our goal is to establish energy estimates of the system \eqref{p5-firstevo}. Our stability results are based on a frequency domain approach combined with a multiplier technique and hence require some Green's formulas in $\Omega_1$ and $\Omega_2.$ In order to justify these formulas, we will need some regularity results proved in \cite{blum1980boundary} and \cite{dauge2006elliptic}. Hence, from now on, we assume that 
 $\Omega_1$ and $\Omega_2$ have a polygonal boundary in the sense that their boundary is piecewise smooth. For $i=1,2,$ we denote by $\omega_{i,j},$ $j=1, \dots, N_i$ the interior angles at the corners of $\Omega_i$ enclosed between two consecutive curves. These angles may vary within the range $0 < \omega_{i,j} \leq 2 \pi.$ To obtain the needed regularity, we require the following assumptions concerning the angles at the corners of $\Omega_1$ and $\Omega_2$:
\begin{assump}\label{anglesomega1}
{\rm The inner angles $\omega_{1,j} < \pi,$ for all $j=1, \dots, N_1.$  }
\end{assump}
\begin{assump}\label{anglesomega2}
{\rm There exists a minimal angle $\omega_0$ where $\omega_0$ depends on the Poisson coefficient $\mu$ (for instance, $\omega_0 \simeq 77.753311 \dots^{\circ}$ when $\mu=0.3$) such that $\omega_{2,j} < \omega_0,$ for all $j=1, \dots, N_2.$ }
\end{assump}

The regularity results are summarized by the following proposition:

\begin{prop}\label{p5-propregsol}
		Let $U=(u,v,\eta,w,z,\xi,\zeta) \in D(\AA).$ Assume that $\Omega_1$ and $\Omega_2$  are polygonal domains as described above, satisfying Assumptions \ref{anglesomega1} and \ref{anglesomega2}, respectively. Then, there exists $\varepsilon \in (0,\frac{1}{2})$ such that $u$ belongs to $H^{\frac{3}{2} - \varepsilon}(\Omega_1)$ and there exists a sequence $\left\{w_k\right\}_{k \geq 0}  \subset H^4(\Omega_2)$ such that $w_k$ converges to $w$ in $H^2(\Omega_2)$ and $\Delta^2 w_k$ converges to $\Delta^2 w$ in $L^2(\Omega_2).$
\end{prop}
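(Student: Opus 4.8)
The statement splits into two essentially independent parts: an elliptic-regularity statement for the wave component $u$ on the polygon $\Omega_1$, and a density-type statement for the plate component $w$ on the polygon $\Omega_2$. I will treat them separately, invoking the corner-regularity theory of \cite{blum1980boundary} and \cite{dauge2006elliptic}.

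For the wave part, the plan is as follows. Since $U\in D(\AA)$, we have $u\in H^1_\ast(\Omega_1)$ with $\Delta u\in L^2(\Omega_1)$, the Neumann-type trace condition $\partial_{\nu_1}u=-\eta\in L^2(\Gamma_1)$ on the exterior boundary, and the transmission condition $\partial_{\nu_1}u=\mathcal B_2 w$ on the interface $I$; moreover $u=w$ on $I$. So $u$ solves a mixed boundary value problem for the Laplacian on the polygon $\Omega_1$ with $L^2$ data in the interior and boundary data of intermediate regularity. By the classical corner-expansion theory (Kondrat'ev / Grisvard / Dauge), the solution decomposes near each corner as a regular $H^2$ part plus a finite sum of singular functions $r^{\pi/\omega_{1,j}}$ (or $r^{\pi/(2\omega_{1,j})}$ for the mixed Dirichlet--Neumann corners at the ends of $I$) times smooth angular profiles. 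Under Assumption \ref{anglesomega1}, every interior angle satisfies $\omega_{1,j}<\pi$, so the most singular exponent appearing is $>\tfrac12$ in the worst (mixed) case and $>1$ in the pure-Neumann case; hence each singular function lies in $H^{3/2-\eps}(\Omega_1)$ for some $\eps\in(0,\tfrac12)$, and the regular part lies in $H^2(\Omega_1)\subset H^{3/2-\eps}(\Omega_1)$. Combining, $u\in H^{3/2-\eps}(\Omega_1)$. I expect the bookkeeping of which corners are pure Neumann versus mixed Dirichlet--Neumann (and checking that the exponent stays above $\tfrac12$ there) to be the delicate point, and this is precisely what Assumption \ref{anglesomega1} is tailored to handle.

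For the plate part, the goal is only to approximate $w$ (together with $\Delta^2 w$) by smooth functions $w_k\in H^4(\Omega_2)$, not to gain global $H^4$ regularity for $w$ itself — indeed the corner theory for the bi-Laplacian with the free boundary operators $\mathcal B_1,\mathcal B_2$ produces singular functions that genuinely obstruct $H^4$ at the corners, which is why Assumption \ref{anglesomega2} only bounds the angles by the threshold $\omega_0(\mu)$ rather than forcing full regularity. The plan is to invoke the regularity result of \cite{blum1980boundary} for the biharmonic operator on polygons: since $U\in D(\AA)$ we have $w\in H^2_\ast(\Omega_2)$, $\Delta^2 w\in L^2(\Omega_2)$, $\mathcal B_1 w=-\xi\in L^2(\Gamma_2)$, $\mathcal B_2 w=\zeta\in L^2(\Gamma_2)$, and on $I$ the conditions $\mathcal B_1 w=0$, $\mathcal B_2 w=\partial_{\nu_1}u$. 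Under Assumption \ref{anglesomega2} the corner decomposition of $w$ has singular exponents that, while possibly below $2$, are all $\geq$ some value guaranteeing $w\in H^{2+s}(\Omega_2)$ for some $s>0$; more importantly, the variational/Green structure \eqref{GF} and the density of $H^4(\Omega_2)$-type test functions allow one to mollify or truncate the singular expansion: write $w=w_{\mathrm{reg}}+\sum_j c_j\, S_j$ with $w_{\mathrm{reg}}\in H^4$-approximable and each singularity $S_j$ approximable in the graph norm of $\Delta^2$ by cutting off $S_j$ away from the corner and smoothing (the point being that $\Delta^2 S_j=0$ near the corner, so truncating $S_j$ does not disturb $\Delta^2$ much). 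Thus one builds $w_k\to w$ in $H^2(\Omega_2)$ with $\Delta^2 w_k\to \Delta^2 w$ in $L^2(\Omega_2)$.

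The main obstacle, and the heart of the proof, is the plate density argument: one must verify that the finitely many corner singularities of $w$ can be approximated in the \emph{graph norm} of $\Delta^2$ (not merely in $H^2$) by $H^4$ functions, which requires knowing explicitly that the singular functions supplied by \cite{blum1980boundary} under Assumption \ref{anglesomega2} are biharmonic (or have $L^2$ bi-Laplacian) in a neighborhood of the corner and are compatible with the boundary operators $\mathcal B_1,\mathcal B_2$ to leading order, so that local truncation plus mollification produces a genuine $H^4(\Omega_2)$ function converging correctly. Once this is in place the two parts combine to give the proposition.
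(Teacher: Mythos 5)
Your treatment of the wave component is essentially the paper's argument in expanded form: the paper simply cites the $[0,s]$-regularity of the mixed Dirichlet--Neumann problem on a polygon (Theorem 23.3 of \cite{dauge2006elliptic}, via Lemma \ref{p5-lemregu}) to get $u\in H^{\frac32-\varepsilon}(\Omega_1)$, and your corner-expansion bookkeeping is the content behind that citation. One small but structurally important point you omit: the resulting regularity $\partial_{\nu_1}u\in H^{-\varepsilon}(I)$ is exactly what supplies the interface datum $v_3=\mathcal B_2 w=\partial_{\nu_1}u$ for the plate problem, so the two halves of the proposition are not independent --- the wave part must be done first.

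The plate part of your proposal has a genuine gap. First, the decomposition $w=w_{\mathrm{reg}}+\sum_j c_jS_j$ with $w_{\mathrm{reg}}$ approximable in $H^4$ and finitely many corner singular functions $S_j$ does not exist under the hypotheses at hand: the obstruction to $H^4$ regularity is not the corners but the fact that $\mathcal B_1w=-\xi$ and $\mathcal B_2w=\zeta$ are merely $L^2(\Gamma_2)$ along the \emph{whole} exterior boundary (and $\partial_{\nu_1}u$ is merely $H^{-\varepsilon}(I)$), so $w$ fails to be $H^4$ even near smooth boundary points; this is precisely what the remark following the proposition emphasizes. Under Assumption \ref{anglesomega2} the corner exponents cause no trouble at all once the data is smooth --- that assumption is what makes Theorem 2 of \cite{blum1980boundary} deliver full $H^4$ regularity for the auxiliary problem. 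Second, even where genuine corner singularities $S_j\sim r^{\alpha}\Phi(\theta)$ with $\Delta^2S_j=0$ do occur, your cutoff-and-mollify scheme does not converge in the graph norm: with a cutoff $\chi_\delta$ vanishing for $r<\delta$, the commutator terms in $\Delta^2(\chi_\delta S_j)$ are of size $\delta^{\alpha-4}$ on an annulus of area $\sim\delta^2$, giving $\|\Delta^2(\chi_\delta S_j)\|_{L^2}^2\sim\delta^{2\alpha-6}$, which tends to zero only if $\alpha>3$ --- far more than membership in $H^2$ (which needs only $\alpha>1$) guarantees. The paper's route (Lemma \ref{p5-lemregw}) avoids both problems: approximate the boundary data $\xi,\zeta,\partial_{\nu_1}u$ by $C_c^\infty$ sequences, solve the auxiliary problem $y_k+\Delta^2y_k=w+\Delta^2w$ with the smoothed data so that $y_k\in H^4(\Omega_2)$ by Blum--Rannacher, use the a priori $H^2$ estimate in terms of the boundary data to get $y_k\to w$ in $H^2(\Omega_2)$, and read off $\Delta^2y_k=w+\Delta^2w-y_k\to\Delta^2w$ in $L^2(\Omega_2)$ directly from the equation. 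You would need to replace your singular-function truncation by this (or an equivalent) data-smoothing argument for the plate half to go through.
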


Now, let us introduce some lemmas that are sufficient to prove our regularity result.

 \begin{lem}\label{p5-lemregu}
	Assume that $\Omega_1$  is a polygonal domain as described above, satisfying Assumption \ref{anglesomega1}. Then, there exists $\varepsilon \in (0,\frac{1}{2})$ such that the solution $y \in H^1(\Omega_1)$ of 
\begin{equation}\label{p5-y1}
\left\{\begin{array}{lll}
	\displaystyle \Delta y \in L^2(\Omega_1), \vspace{0.15cm}\\
	\displaystyle \partial_{\nu_1} y =v_1 \in L^2(\Gamma_1), & \text{on} \ \ \Gamma_1 ,\\
\displaystyle y =v_2 \in H^{\frac{3}{2}}(I), & \text{on} \ \ I ,
\end{array}\right.
\end{equation}
belongs to $H^{\frac{3}{2} - \varepsilon}(\Omega_1).$
\end{lem}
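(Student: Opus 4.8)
The plan is to reduce \eqref{p5-y1} to a mixed Dirichlet--Neumann problem with homogeneous data on $I$ and then to quote the regularity theory for the Laplacian on plane polygons from \cite{blum1980boundary} and \cite{dauge2006elliptic}; the whole point is to verify that Assumption \ref{anglesomega1} excludes every corner singularity of order $\le \tfrac12$.

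First I would lift the Dirichlet datum: since $v_2 \in H^{3/2}(I)$ lies in the natural trace space of $H^2(\Omega_1)$, pick $R \in H^2(\Omega_1)$ with $R|_I = v_2$ and set $\tilde y := y - R$. Then $\tilde y \in H^1(\Omega_1)$ solves $\Delta \tilde y = \Delta y - \Delta R \in L^2(\Omega_1)$, $\tilde y = 0$ on $I$, and $\partial_{\nu_1}\tilde y = v_1 - \partial_{\nu_1} R =: \tilde v_1$ on $\Gamma_1$, with $\tilde v_1 \in L^2(\Gamma_1)$ because $\partial_{\nu_1} R \in H^{1/2}(\Gamma_1) \hookrightarrow L^2(\Gamma_1)$. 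Thus it suffices to treat data $(\tilde f,\tilde v_1,0) \in L^2(\Omega_1)\times L^2(\Gamma_1)\times H^{3/2}(I)$. Next I would classify the corners of $\Omega_1$: a corner interior to $\Gamma_1$ carries a Neumann--Neumann condition, one interior to $I$ a Dirichlet--Dirichlet condition, and a corner where $\Gamma_1$ and $I$ meet a mixed Dirichlet--Neumann condition. The characteristic exponents of $-\Delta$ at a corner of opening $\omega$ are $\{k\pi/\omega:k\ge1\}$ in the first two cases and $\{(k-\tfrac12)\pi/\omega:k\ge1\}$ in the mixed case, so the smallest exponent at any corner of $\Omega_1$ is at least $\pi/(2\omega_{1,j})$; by Assumption \ref{anglesomega1} we have $\omega_{1,j}<\pi$ for all $j$, hence this number is $>\tfrac12$.

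Finally, fix $\varepsilon\in(0,\tfrac12)$ and set $s:=\tfrac32-\varepsilon\in(1,\tfrac32)$. By the previous step no corner exponent lies in $(0,s-1]\subset(0,\tfrac12)$, so in the scale $H^t(\Omega_1)$ with $1<t\le s$ the solution of the mixed problem carries no singular corrector and no corner compatibility condition is forced (such a condition would only appear at the threshold $t=\tfrac32$). The results of \cite{blum1980boundary,dauge2006elliptic} then realize the solution operator of the mixed problem as a bounded map $H^{s-2}(\Omega_1)\times H^{s-3/2}(\Gamma_1)\times H^{s-1/2}(I)\to H^{s}(\Omega_1)$; since $\tilde f\in L^2(\Omega_1)\hookrightarrow H^{s-2}(\Omega_1)$, $\tilde v_1\in L^2(\Gamma_1)\hookrightarrow H^{-\varepsilon}(\Gamma_1)=H^{s-3/2}(\Gamma_1)$ and $0\in H^{s-1/2}(I)$, we obtain $\tilde y\in H^{3/2-\varepsilon}(\Omega_1)$, whence $y=\tilde y+R\in H^{3/2-\varepsilon}(\Omega_1)$. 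I expect the one genuinely delicate point to be the bookkeeping at the corners where $\Gamma_1$ meets $I$: these are the only corners whose smallest exponent can approach $\tfrac12$ (as $\omega_{1,j}\uparrow\pi$), so the argument hinges entirely on the strict inequality $\omega_{1,j}<\pi$; the unavoidable loss of $\varepsilon$ records both the mere $L^2$-regularity (rather than $H^{1/2}$) of the Neumann datum and the absence of a compatibility condition at those mixed corners below $H^{3/2}$.
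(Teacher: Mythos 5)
Your proof is correct and rests on the same foundation as the paper's: the paper simply observes that the map $y\mapsto(\Delta y,\,y|_I,\,\partial_{\nu_1}y|_{\Gamma_1})$ is $[0,s]$-regular for $s=\tfrac12-\varepsilon$ by Theorem 23.3 of \cite{dauge2006elliptic}, and reads off $y\in H^{3/2-\varepsilon}(\Omega_1)$ directly from $\Delta y\in L^2\subset H^{s-1}$, $v_2\in H^{3/2}(I)\subset H^{s+1/2}(I)$ and $v_1\in L^2(\Gamma_1)\subset H^{s-1/2}(\Gamma_1)$, without lifting the Dirichlet datum. Your explicit classification of the corner types and the verification that the smallest mixed Dirichlet--Neumann exponent $\pi/(2\omega_{1,j})$ exceeds $\tfrac12$ under Assumption \ref{anglesomega1} is exactly the content hidden inside that citation, so the two arguments coincide in substance.
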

\begin{proof}
We define the operator: 
\begin{align*}
P: H^{1+s}(\Omega_1)&\rightarrow \Xi^s(\Omega_1):= H^{s-1}(\Omega_1) \times H^{s+\frac{1}{2}}(I) \times H^{s-\frac{1}{2}}(\Gamma_1)\\
y&\mapsto P(y):=(\Delta y, y |_{I},\partial_{\nu_1} y |_{\Gamma_1} )
\end{align*}
for all $0<s<1$ such that $s \neq \frac{1}{2}.$
From the elliptic regularity theory (see Theorem 23.3 of \cite{dauge2006elliptic}), we deduce that there exists $\varepsilon \in (0,\frac{1}{2})$ such that for any $s=\frac{1}{2}-\varepsilon,$ the operator $P$ is $[0,s]-$regular in the sense that if $y \in H^1(\Omega_1)$ is such that  $P(y) \in \Xi^s(\Omega_1),$  then $y \in H^{1+s}(\Omega_1).$ This implies that the solution  $y \in H^1(\Omega_1)$ of system \eqref{p5-y1} belongs to $H^{1+s}(\Omega_1),$ hence $y$ belongs to the space $H^{\frac{3}{2} - \varepsilon}(\Omega_1)$ for some $\varepsilon \in (0,\frac{1}{2}).$
\end{proof}

\begin{lem}\label{p5-lemregw}
 Assume that $\Omega_2$  is a polygonal domain as described above, satisfying Assumption \ref{anglesomega2}. Then, for any $\varepsilon \in (0,\frac{1}{2})$ and $y \in H^2(\Omega_2)$ solution of 
\begin{equation}\label{p5-y2}
\left\{\begin{array}{lll}
	\displaystyle \Delta^2 y \in L^2(\Omega_2),\vspace{0.15cm}\\
	\displaystyle \mathcal{B}_1 y =0, & \text{on} \ \ I, \vspace{0.15cm}\\
	\displaystyle \mathcal{B}_2 y =v_3 \in H^{-\varepsilon} (I), & \text{on} \ \ I,\vspace{0.15cm}\\
	\displaystyle \mathcal{B}_1 y =v_4 \in L^2(\Gamma_2), & \text{on} \ \ \Gamma_2,\vspace{0.15cm}\\
	\displaystyle \mathcal{B}_2 y =v_5 \in L^2(\Gamma_2), & \text{on} \ \ \Gamma_2,\vspace{0.15cm}
\end{array}\right.
\end{equation}
there exists a sequence $\left\{y_k\right\}_{k \geq 0}  \subset H^4(\Omega_2)$ such that $y_k$ converges to $y$ in $H^2(\Omega_2)$ and $\Delta^2 y_k$ converges to $\Delta^2 y$ in $L^2(\Omega_2).$
\end{lem}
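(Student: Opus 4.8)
The plan is to prove the statement by an approximation argument: regularize the boundary data in \eqref{p5-y2}, solve the regularized biharmonic problems, invoke the elliptic regularity theory for the biharmonic operator on polygons (under Assumption \ref{anglesomega2}) to place the regularized solutions in $H^4(\Omega_2)$, and then pass to the limit through the stability estimate of the weak formulation. This is precisely what is needed afterwards to extend Green's formula \eqref{GF} to $w$ by density, as in Proposition \ref{p5-propregsol}.

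Set $g:=\Delta^2 y\in L^2(\Omega_2)$. Every affine function $p$ satisfies $\Delta^2 p=0$, $\mathcal{B}_1 p=\mathcal{B}_2 p=0$ and $a(p,\overline p)=0$, so subtracting from $y$ a suitable affine function changes neither \eqref{p5-y2} nor the quantities to be approximated, and we may assume $y\in H^2_{\ast}(\Omega_2)$ is the Lax--Milgram solution (the form $a(\cdot,\overline{\cdot})$ being coercive on $H^2_{\ast}(\Omega_2)$, as recalled after \eqref{starsnorm}) of the variational formulation of \eqref{p5-y2},
\begin{equation*}
a(y,\overline\phi)=\int_{\Omega_2}g\,\overline\phi\,dx-\int_I v_3\,\overline\phi\,d\Gamma+\int_{\Gamma_2}\bigl(v_4\,\partial_{\nu_2}\overline\phi-v_5\,\overline\phi\bigr)d\Gamma,\qquad\forall\,\phi\in H^2_{\ast}(\Omega_2),
\end{equation*}
whose right-hand side is a bounded antilinear form on $H^2(\Omega_2)$ by the trace theorem: since $\varepsilon<\tfrac12$ we have $H^{3/2}(I)\hookrightarrow H^{\varepsilon}(I)$, so the pairing $\langle v_3,\overline\phi|_I\rangle_{H^{-\varepsilon}(I),H^{\varepsilon}(I)}$ is well defined, while $v_4,v_5\in L^2(\Gamma_2)$ pair with the $H^{1/2}(\Gamma_2)$- and $H^{3/2}(\Gamma_2)$-traces of $\partial_{\nu_2}\phi$ and $\phi$.

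Next, since $C^{\infty}_c$ of the relative interiors of $I$ and of $\Gamma_2$ is dense in $H^{-\varepsilon}(I)$ (as $\varepsilon<\tfrac12$) and in $L^2(\Gamma_2)$ respectively, pick sequences $v_3^k\in H^{1/2}(I)$, $v_4^k\in H^{3/2}(\Gamma_2)$, $v_5^k\in H^{1/2}(\Gamma_2)$, supported away from the corners of $\partial\Omega_2$, with $v_3^k\to v_3$ in $H^{-\varepsilon}(I)$, $v_4^k\to v_4$ in $L^2(\Gamma_2)$ and $v_5^k\to v_5$ in $L^2(\Gamma_2)$; after adjusting one of these sequences by an $L^2$-correction tending to $0$, we may moreover assume that the associated right-hand side functional annihilates every affine function (the solvability condition which guarantees that the $H^2_{\ast}$-Lax--Milgram solution $y_k$ of the regularized problem satisfies the variational identity against all of $H^2(\Omega_2)$, and in particular that $\Delta^2 y_k=g$ in $\mathcal D'(\Omega_2)$ with $\mathcal{B}_1 y_k=0$, $\mathcal{B}_2 y_k=v_3^k$ on $I$ and $\mathcal{B}_1 y_k=v_4^k$, $\mathcal{B}_2 y_k=v_5^k$ on $\Gamma_2$ in the trace sense). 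Because $g\in L^2(\Omega_2)$ and the boundary data now have the regularity required at smooth boundary points ($\mathcal{B}_2$-data in $H^{1/2}$, $\mathcal{B}_1$-data in $H^{3/2}$), and because Assumption \ref{anglesomega2} forces each interior angle $\omega_{2,j}$ strictly below the critical angle $\omega_0$ beyond which a singular exponent of the biharmonic operator with the free-edge operators $\mathcal{B}_1,\mathcal{B}_2$ would obstruct $H^4$-regularity, the regularity theory on polygonal domains (\cite{blum1980boundary}, \cite{dauge2006elliptic}) yields $y_k\in H^4(\Omega_2)$ for every $k$.

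Finally, pass to the limit. Since $\Delta^2 y_k=g=\Delta^2 y$ in $L^2(\Omega_2)$ for every $k$, the convergence $\Delta^2 y_k\to\Delta^2 y$ in $L^2(\Omega_2)$ is immediate. For the $H^2$-convergence, $y_k-y\in H^2_{\ast}(\Omega_2)$ solves the variational problem above with data $(0,0,v_3^k-v_3,v_4^k-v_4,v_5^k-v_5)$; testing with $\phi=y_k-y$ and using the coercivity of $a(\cdot,\overline{\cdot})$ on $H^2_{\ast}(\Omega_2)$ together with the trace bounds recalled above gives
\begin{equation*}
\|y_k-y\|_{H^2(\Omega_2)}\lesssim\|v_3^k-v_3\|_{H^{-\varepsilon}(I)}+\|v_4^k-v_4\|_{L^2(\Gamma_2)}+\|v_5^k-v_5\|_{L^2(\Gamma_2)}\longrightarrow 0.
\end{equation*}
Re-adding the (smooth, hence $H^4$) affine correction discarded at the start affects neither $\Delta^2$ nor the $H^2$-limit, so the proof is complete. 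The main obstacle is the $H^4$-regularity step: one must know that $\omega_0$ in Assumption \ref{anglesomega2} is exactly the critical angle furnished by the Blum--Rannacher/Dauge analysis for the biharmonic operator equipped with the boundary operators $\mathcal{B}_1,\mathcal{B}_2$ of this problem, and — having pushed the approximating data away from the vertices — that no additional corner compatibility condition survives at the junction points $\partial I$ where the interface meets $\Gamma_2$; the bookkeeping of the affine-function kernel needed to make the auxiliary problems genuinely solvable is the secondary technical point.
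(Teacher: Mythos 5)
Your argument is correct and follows the same overall strategy as the paper: regularize the boundary data by smooth functions supported away from the corners, solve the resulting auxiliary problems, invoke the Blum--Rannacher/Dauge regularity theory under Assumption \ref{anglesomega2} to place the auxiliary solutions in $H^4(\Omega_2)$, and pass to the limit via a stability estimate controlled by the $H^{-\varepsilon}(I)$- and $L^2(\Gamma_2)$-norms of the data. The one genuine difference lies in the choice of auxiliary problem. You keep the pure biharmonic equation $\Delta^2 y_k=\Delta^2 y$, which forces you to quotient out the affine kernel of $a(\cdot,\overline{\cdot})$ and to correct the approximating data so that the solvability (compatibility) condition against affine functions is preserved; in exchange, $\Delta^2 y_k=\Delta^2 y$ holds identically and the $L^2$-convergence of the bilaplacians is trivial. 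The paper instead solves $y_k+\Delta^2 y_k=y+\Delta^2 y$ with the regularized boundary data: the added zeroth-order term makes the underlying form coercive on all of $H^2(\Omega_2)$, so there is no kernel, no compatibility condition and no affine correction, and $\Delta^2 y_k=y+\Delta^2 y-y_k\to\Delta^2 y$ follows at once from the convergence of $y_k$. The paper's device is shorter and sidesteps exactly the point you flag as the secondary technical burden (the bookkeeping of the affine kernel and of the solvability constraint for the approximating data); your version buys the exact identity $\Delta^2 y_k=\Delta^2 y$, which is not actually needed for the later applications of the lemma.
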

\begin{proof}
Given $v_4,v_5 \in L^2(\Gamma_2)$ and $v_3 \in H^{-\varepsilon}(I)$, by density, there exist three sequences $ \left\{v_{4,k}\right\}_{k \geq 0},$ $ \left\{v_{5,k}\right\}_{k \geq 0}$ $\subset C_c^{\infty}(\Gamma_2)$ and  $ \left\{v_{3,k}\right\}_{k \geq 0}$ $\subset C_c^{\infty}(I)$ such that 
  \begin{equation}\label{p5-convergence}
v_{4,k} \underset{k \rightarrow \infty}{\longrightarrow} v_4, \ \ v_{5,k} \underset{k \rightarrow \infty}{\longrightarrow} v_5 \ \ \text{in} \ \ L^2(\Gamma_2) \ \ \ \text{and} \ \ \ v_{3,k} \underset{k \rightarrow \infty}{\longrightarrow} v_3 \ \ \text{in} \ \ H^{-\varepsilon }(I). 
\end{equation}
Then we consider the unique solution $y_k$ of \begin{equation}\label{p5-y22}
\left\{\begin{array}{lll}
	\displaystyle y_k + \Delta^2 y_k = y + \Delta^2 y \in L^2(\Omega_2), & \text{in} \ \ \Omega_2,\vspace{0.15cm}\\
	\displaystyle \mathcal{B}_1 y_k =0, & \text{on} \ \ I, \vspace{0.15cm}\\
	\displaystyle \mathcal{B}_2 y_k =v_{3,k}, & \text{on} \ \ I,\vspace{0.15cm}\\
	\displaystyle \mathcal{B}_1 y_k =v_{4,k}, & \text{on} \ \ \Gamma_2,\vspace{0.15cm}\\
	\displaystyle \mathcal{B}_2 y_k =v_{5,k}, & \text{on} \ \ \Gamma_2.\vspace{0.15cm}
\end{array}\right.
\end{equation} 
As the system $(\Delta^2, \mathcal{B}_1, \mathcal{B}_2)$ is a strongly elliptic system, we can use the elliptic regularity theory  (see Theorem 2 of \cite{blum1980boundary}). This allows us to deduce that $y_k \in H^4(\Omega_2)$ and that there exists a positive constant $C >0$ such that 
\begin{equation}\label{p5-normineqreg}
\norm{y-y_k}_{H^2(\Omega_2)} \leq C \left( \norm{v_4-v_{4,k}}_{L^2(\Gamma_2)} + \norm{v_5-v_{5,k}}_{L^2(\Gamma_2)} + \norm{v_3-v_{3,k}}_{H^{-\varepsilon }(I)} \right).
\end{equation}
By \eqref{p5-convergence} and \eqref{p5-normineqreg}, we deduce that  $y_k$ converges to $y$ in $H^2(\Omega_2).$ Thanks to \eqref{p5-y22}, we have $\Delta^2 y_k=y+\Delta^2 y -y_k,$ which converges to $\Delta^2 y$ in $L^2(\Omega_2).$
\end{proof}
\\

\noindent \textbf{Proof of Proposition \ref{p5-propregsol}.}
Indeed, $u$ may be seen as the unique solution $u \in H^1(\Omega_1)$ of 
\begin{equation*}
\left\{\begin{array}{lll}
	\displaystyle \Delta u \in L^2(\Omega_1),\vspace{0.15cm}\\
	\displaystyle \partial_{\nu_1} u =-\eta \in L^2(\Gamma_1), & \text{on} \ \ \Gamma_1 ,\\
\displaystyle u =w \in H^{\frac{3}{2}}(I), & \text{on} \ \ I ,
\end{array}\right.
\end{equation*}
and using Lemma \ref{p5-lemregu}, we conclude that $u \in H^{\frac{3}{2}- \varepsilon}(\Omega_1)$ for some $\varepsilon \in (0,\frac{1}{2}).$ 
By the definition of $D(\AA),$ we notice that $w$ is a solution of system \eqref{p5-y2} with $v_3=\partial_{\nu_1} u,$ $v_4=-\xi$ and $v_5=\zeta.$ We know that $v_3$ belongs to $H^{-\varepsilon}(I)$ for some $\varepsilon >0,$ while the $L^2(\Gamma_2)$ regularity of $v_4$ and $v_5$ follows from the regularity $\xi$ and $\zeta \in L^2(\Gamma_2).$ We then conclude using Lemma \ref{p5-lemregw}.
\begin{rk}
The situation is much more complicated for the regularity of $w$ due to the lack of regularity of the boundary conditions $\mathcal{B}_1 w =-\xi$ and $\mathcal{B}_2 w =\zeta$ on $\Gamma_2,$ as $\xi$ and $\zeta$ belong at most to $L^2(\Gamma_2),$ which is not sufficient to achieve the required regularity of $w.$ For this reason, we use arguments inspired by Lemma 3.1 of \cite{rao1993stabilization} and Lemma 3.2 of \cite{ammari2010stabilization}.
\end{rk}
\section{Strong Stability with Non-compact Resolvent}\label{StrongSta}

 In this section, we will prove the strong stability of the system \eqref{p5-sys2}-\eqref{p5-diff} in the sense that its energy $E(t)$ converges to zero as $t$ goes to infinity for all initial data in $\HH.$
As the resolvent of $\AA$ is not compact, classical methods such as Lasalle's invariance principle \cite{slemrod1989feedback} or the spectrum decomposition theory of Benchimol \cite{benchimol1978note} are not applicable in this case. Instead, we will prove the strong stability using a more general criteria of Arendt-Batty \cite{arendt1988tauberian} which states that in a reflexive Banach space, a $C_{0}$-semigroup of contractions $(e^{t\mathcal{A}})_{t \geq 0 }$ is strongly stable if $\mathcal{A}$ has no eigenvalues on the imaginary axis, and $\sigma(\mathcal{A}) \cap i\mathbb{R}$ is countable, where $\sigma(\mathcal{A})$ denotes the spectrum of  $\mathcal{A}.$ Our main result in this section is summarized by the following theorem:
\begin{Thm}\label{p5-strong}
Assume that $\Omega_1$ and $\Omega_2$  are polygonal domains as described above. Assume also that Assumptions \ref{anglesomega1} and \ref{anglesomega2} hold. Then, the semigroup of contractions $e^{t\AA}$ is strongly stable in the energy space $\HH$ in the sense that 
	$$\lim_{t\to \infty}\norm{e^{t\AA}U_0}_{\HH}=0, \,\,\,\, \forall U_0\in \HH.$$ 
\end{Thm}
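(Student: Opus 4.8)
The plan is to apply the Arendt--Batty criterion \cite{arendt1988tauberian}: since $\HH$ is reflexive and $\left(e^{t\AA}\right)_{t\geq 0}$ is a $C_0$--semigroup of contractions (Proposition \ref{p5-aplusbmdissip}), it suffices to check that $\AA$ has no eigenvalue on $i\R$ and that $\sigma(\AA)\cap i\R$ is countable. I would in fact prove the stronger statement $i\R\subset\rho(\AA)$, which settles both points at once; this is where the polygonal regularity of Assumptions \ref{anglesomega1}--\ref{anglesomega2} (via Proposition \ref{p5-propregsol}) and a unique continuation theorem enter.

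\emph{Step 1 (no imaginary eigenvalue).} Let $\lambda\in\R$ and $U=(u,v,\eta,w,z,\xi,\zeta)\in D(\AA)$ solve $\AA U=i\lambda U$. Taking the real part of $\left(\AA U,U\right)_{\HH}=i\lambda\norm{U}_{\HH}^{2}$ and using \eqref{p5-Adissipative} forces $\eta=0$ on $\Gamma_1$ and $\xi=\zeta=0$ on $\Gamma_2$. Reading off the components of $\AA U=i\lambda U$ from \eqref{p5-opA}, one gets $v=i\lambda u$, $\Delta u=-\lambda^{2}u$ in $\Omega_1$, $z=i\lambda w$, $\Delta^{2}w=\lambda^{2}w$ in $\Omega_2$, together with $\gamma_1(v)=(1+i\lambda)\eta=0$, $\gamma_{2,1}(z)=(1+i\lambda)\zeta=0$ and $\gamma_{2,2}(z)=(1+i\lambda)\xi=0$. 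If $\lambda\neq 0$, this yields $u=0$ on $\Gamma_1$ and, from the boundary condition in \eqref{p5-domain}, $\partial_{\nu_1}u=-\eta=0$ on $\Gamma_1$; thus $u$ solves $-\Delta u=\lambda^{2}u$ with vanishing Cauchy data on the positive--measure set $\Gamma_1$, so by Holmgren's theorem together with interior analyticity of solutions, $u\equiv 0$ in the connected domain $\Omega_1$. Then all traces of $u$ on the interface vanish, and the transmission conditions $u=w$, $\mathcal B_1w=0$, $\mathcal B_2w=\partial_{\nu_1}u$ on $I$, combined with $w=\partial_{\nu_2}w=0$, $\mathcal B_1w=-\xi=0$, $\mathcal B_2w=\zeta=0$ on $\Gamma_2$, show that $w$ satisfies $\Delta^{2}w=\lambda^{2}w$ with full vanishing boundary data (Dirichlet, Neumann, $\mathcal B_1$, $\mathcal B_2$) on $\Gamma_2$. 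Extending $w$ by zero across $\Gamma_2$ produces an $H^{4}_{\mathrm{loc}}$ solution of the constant--coefficient elliptic equation $(\Delta^{2}-\lambda^{2})\widetilde w=0$ that vanishes on an open set, whence $\widetilde w\equiv 0$ and $w\equiv 0$ in $\Omega_2$. Consequently $v=z=0$, and since $\eta=\xi=\zeta=0$ already, $U=0$. If $\lambda=0$, the same computation gives $\norm{U}_{\HH}^{2}=0$, and since $\norm{\cdot}_{\HH}$ is a genuine norm on $\HH$ (this is precisely the role of the spaces $H^{1}_{\ast}(\Omega_1)$, $H^{2}_{\ast}(\Omega_2)$, cf. the discussion after \eqref{p5-firstevo}), again $U=0$. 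Hence $\AA$ has no eigenvalue on $i\R$.

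\emph{Step 2 ($i\R\subset\rho(\AA)$).} Fix $\lambda\in\R$ and $F=(f^1,g^1,h^1,f^2,g^2,h^2,h^3)\in\HH$, and solve $i\lambda U-\AA U=F$ in $D(\AA)$. Since $1+i\lambda\neq 0$ for every real $\lambda$, the components $v=i\lambda u-f^1$, $z=i\lambda w-f^2$ and
$$
\eta=\frac{\gamma_1(v)+h^1}{1+i\lambda},\qquad \xi=\frac{\gamma_{2,2}(z)+h^2}{1+i\lambda},\qquad \zeta=\frac{\gamma_{2,1}(z)+h^3}{1+i\lambda}
$$
are determined by $(u,w)$, and eliminating them reduces the resolvent equation to: find $(u,w)\in\mathsf H$ with $-\lambda^{2}u-\Delta u=g^1+i\lambda f^1$ in $\Omega_1$ and $-\lambda^{2}w+\Delta^{2}w=g^2+i\lambda f^2$ in $\Omega_2$, under Robin--type conditions on $\Gamma_1,\Gamma_2$ with coefficient $\tfrac{i\lambda}{1+i\lambda}$ and the transmission conditions on $I$. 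Testing against $(\varphi,\psi)\in\mathsf H$ and using Green's formula \eqref{GF} exactly as in the proof of Proposition \ref{p5-aplusbmdissip}, this becomes a variational problem $b_\lambda\big((u,w),(\varphi,\psi)\big)=l_\lambda(\varphi,\psi)$ on $\mathsf H$, in which the leading part $\int_{\Omega_1}\nabla u\cdot\nabla\overline{\varphi}\,dx+a(w,\overline{\psi})$ is coercive on $\mathsf H$, while the remaining terms — $-\lambda^{2}\big(\int_{\Omega_1}u\overline{\varphi}\,dx+\int_{\Omega_2}w\overline{\psi}\,dx\big)$ and $\tfrac{i\lambda}{1+i\lambda}\big(\int_{\Gamma_1}u\overline{\varphi}\,d\Gamma+\int_{\Gamma_2}(\partial_{\nu_2}w\,\partial_{\nu_2}\overline{\psi}+w\overline{\psi})\,d\Gamma\big)$ — define a compact perturbation, via the compact embeddings $H^{1}_{\ast}(\Omega_1)\hookrightarrow L^{2}(\Omega_1)$, $H^{2}_{\ast}(\Omega_2)\hookrightarrow L^{2}(\Omega_2)$ and the compactness of the trace maps into $L^{2}(\Gamma_1)$ and $L^{2}(\Gamma_2)\times L^{2}(\Gamma_2)$. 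By the Fredholm alternative, $b_\lambda(\cdot,\cdot)=l_\lambda(\cdot)$ is solvable for every $l_\lambda$ iff the homogeneous problem has only the trivial solution; but a homogeneous solution $(u,w)$ reconstructs, via the distributional and regularity arguments of Proposition \ref{p5-aplusbmdissip}, to a $U\in D(\AA)$ with $\AA U=i\lambda U$, hence $U=0$ by Step 1. Therefore $i\lambda I-\AA$ is bijective and, being closed, has bounded inverse, i.e.\ $i\lambda\in\rho(\AA)$ (for $\lambda=0$, $b_0$ is directly coercive and no Fredholm argument is needed). Thus $\sigma(\AA)\cap i\R=\emptyset$ is countable and contains no eigenvalue, and the Arendt--Batty theorem gives $\lim_{t\to\infty}\norm{e^{t\AA}U_0}_{\HH}=0$ for all $U_0\in\HH$.

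\emph{Main difficulty.} The crux is the unique continuation in Step 1: the dissipation controls the boundary data only on the \emph{exterior} pieces $\Gamma_1$, $\Gamma_2$ (not on the whole boundaries), so one must combine a unique continuation theorem for the Helmholtz operator in $\Omega_1$ and for the fourth--order operator $\Delta^{2}-\lambda^{2}$ in $\Omega_2$ with a careful transport of the vanishing across the interface $I$, all at the reduced Sobolev regularity ($u\in H^{\frac{3}{2}-\varepsilon}(\Omega_1)$, with $w$ only $H^{2}(\Omega_2)$--approximable by $H^{4}$ functions) that Assumptions \ref{anglesomega1}--\ref{anglesomega2} and Proposition \ref{p5-propregsol} supply near the corners.
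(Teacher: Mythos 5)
Your proposal follows the same route as the paper's proof: the Arendt--Batty criterion, combined with a no-imaginary-eigenvalue lemma (the dissipation identity \eqref{p5-Adissipative} forcing $\eta=\xi=\zeta=0$, then unique continuation for the Helmholtz and plate operators from vanishing Cauchy data on $\Gamma_1$ and $\Gamma_2$) and a surjectivity lemma obtained from the variational formulation in $\mathsf{H}$ via a compact-perturbation/Fredholm argument whose injectivity is supplied by the first lemma. The only differences are cosmetic --- you invoke Holmgren plus extension by zero where the paper cites a Carleman-based unique continuation theorem, and you place the boundary integrals in the compact perturbation rather than in the coercive form $b_\lambda$ --- so the argument is correct and essentially identical to the paper's.
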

\noindent In the proof of Theorem \ref{p5-strong}, we shall use the following lemmas:

\begin{lem}\label{p5-inj} Assuming that Assumptions \ref{anglesomega1} and \ref{anglesomega2} hold, the operator $\AA$ has no pure imaginary eigenvalues.
\end{lem}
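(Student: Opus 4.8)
The goal is to show that $\mathcal A$ has no eigenvalue $\lambda = i\sigma$ with $\sigma \in \mathbb R$. The plan is to argue by contradiction: suppose $\mathcal A U = i\sigma U$ for some $0 \neq U = (u,v,\eta,w,z,\xi,\zeta) \in D(\mathcal A)$. First I would treat the case $\sigma = 0$ separately; here $\mathcal A U = 0$ forces, via the dissipation identity \eqref{p5-Adissipative}, that $\eta = 0$ on $\Gamma_1$ and $\xi = \zeta = 0$ on $\Gamma_2$, and unwinding \eqref{p5-opA} gives $v = z = 0$, $\Delta u = 0$ in $\Omega_1$, $\Delta^2 w = 0$ in $\Omega_2$, with homogeneous boundary and transmission data; pairing $\mathcal A U$ against $U$ and using Green's formula \eqref{GF} then yields $\|U\|_{\mathcal H}^2 = 0$, hence $U = 0$ by the norm-definiteness already established in the remark following \eqref{p5-firstevo}.

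For $\sigma \neq 0$, the key first step is that taking the real part of $(\mathcal A U, U)_{\mathcal H} = i\sigma \|U\|_{\mathcal H}^2$ and invoking \eqref{p5-Adissipative} gives
$$
\int_{\Gamma_1} |\eta|^2\, d\Gamma + \int_{\Gamma_2} \big(|\xi|^2 + |\zeta|^2\big)\, d\Gamma = 0,
$$
so $\eta = 0$ on $\Gamma_1$ and $\xi = \zeta = 0$ on $\Gamma_2$. Feeding this back into the component equations in \eqref{p5-opA}: from the third component $\gamma_1(v) - \eta = i\sigma \eta = 0$ we get $v|_{\Gamma_1} = 0$; similarly $\gamma_{2,2}(z) = \gamma_{2,1}(z) = 0$, i.e. $z = \partial_{\nu_2} z = 0$ on $\Gamma_2$. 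Since $v = i\sigma u$ and $z = i\sigma w$ with $\sigma \neq 0$, this yields $u = \partial_{\nu_1} u = 0$ on $\Gamma_1$ (using also $\partial_{\nu_1} u = -\eta = 0$) and $w = \partial_{\nu_2} w = 0$ on $\Gamma_2$. Moreover the first two components give $u_{tt} = -\sigma^2 u = \Delta u$, i.e. $\Delta u + \sigma^2 u = 0$ in $\Omega_1$, and $\Delta^2 w - \sigma^2 w = 0$ in $\Omega_2$, together with $\mathcal B_1 w = -\xi = 0$, $\mathcal B_2 w = \zeta = 0$ on $\Gamma_2$, and the interface conditions $u = w$, $\mathcal B_1 w = 0$, $\mathcal B_2 w = \partial_{\nu_1} u$ on $I$.

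Now $u$ solves the Helmholtz equation $\Delta u + \sigma^2 u = 0$ in $\Omega_1$ with $u = 0$ and $\partial_{\nu_1} u = 0$ on the part $\Gamma_1$ of $\partial\Omega_1$ having positive measure. The heart of the argument is a \emph{unique continuation} property: by Holmgren's theorem (or the Aronszajn–Cordes unique continuation theorem, which applies since $u \in H^{3/2-\varepsilon}(\Omega_1)$ by Proposition \ref{p5-propregsol} and can be bootstrapped to be smooth in the interior), the vanishing of both Cauchy data on $\Gamma_1$ forces $u \equiv 0$ in $\Omega_1$. Propagating this to the interface gives $u = 0$ and $\partial_{\nu_1} u = 0$ on $I$, hence via the transmission conditions $w = \partial_{\nu_2} w = \mathcal B_1 w = \mathcal B_2 w = 0$ on $I$ as well, so $w$ has vanishing Cauchy-type data on all of $\partial\Omega_2$. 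Testing the plate equation $\Delta^2 w - \sigma^2 w = 0$ against $\bar w$ and using the Green formula \eqref{GF} (justified in the $H^4$-approximation sense of Lemma \ref{p5-lemregw}) yields $a(w,\bar w) - \sigma^2\|w\|_{L^2(\Omega_2)}^2 = 0$; to conclude $w \equiv 0$ one combines this with a standard unique continuation / elliptic argument for the bi-Laplacian with overdetermined boundary data. Finally $u \equiv 0$, $w \equiv 0$ force $v = z = 0$, and with $\eta = \xi = \zeta = 0$ we obtain $U = 0$, a contradiction.

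\textbf{Main obstacle.} I expect the delicate point to be the unique continuation step combined with the limited regularity of $u$ near the corners (only $H^{3/2-\varepsilon}$ up to the boundary) — one must be careful that the Cauchy data $u|_{\Gamma_1}$, $\partial_{\nu_1}u|_{\Gamma_1}$ make sense and that the unique continuation theorem applies despite the polygonal geometry and the transmission structure at $I$; handling the plate component $w$ similarly (since $\Delta^2 - \sigma^2$ is only weakly elliptic of fourth order and $w$ need not be in $H^4$) requires either the regularizing sequence of Lemma \ref{p5-lemregw} or an explicit multiplier identity rather than a black-box unique continuation theorem.
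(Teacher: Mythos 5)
Your overall strategy coincides with the paper's: the dissipation identity kills $\eta,\xi,\zeta$, the case $\lambda=0$ is settled by the energy identity together with the normalizations built into $H^1_*(\Omega_1)$ and $H^2_*(\Omega_2)$, and for $\lambda\neq 0$ the wave component $u$ is eliminated by unique continuation from the vanishing Cauchy data $u=\partial_{\nu_1}u=0$ on $\Gamma_1$ (the paper cites the Carleman-estimate result of Le~Rousseau--Lebeau \cite{COCV_2012__18_3_712_0} for exactly this). Up to that point your argument is correct and essentially identical to the paper's.

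The gap is in how you dispose of $w$. First, your claim that $w$ has ``vanishing Cauchy-type data on all of $\partial\Omega_2$'' is not justified on the interface: the transmission conditions give only $w=u=0$, $\mathcal{B}_1 w=0$ and $\mathcal{B}_2 w=\partial_{\nu_1}u=0$ on $I$, and none of these controls $\partial_{\nu_2}w$ on $I$ (nor the full set of four Cauchy traces a fourth-order operator requires), so you cannot continue $w$ across $I$. Second, the identity $a(w,\overline{w})-\sigma^2\|w\|^2_{L^2(\Omega_2)}=0$ carries no information here — it is satisfied by every eigenfunction of the free plate and cannot be ``combined'' with anything to force $w\equiv 0$. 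The step you leave as a black box is precisely the nontrivial one, and the paper resolves it entirely on $\Gamma_2$, without using the interface at all: since $w=\partial_{\nu_2}w=0$ on $\Gamma_2$, both tangential and normal derivatives vanish there, so $\nabla w=0$ on $\Gamma_2$; the explicit tangential representations \eqref{p5-JEL} then give $\mathcal{C}_1 w=\mathcal{C}_2 w=0$ on $\Gamma_2$, so the conditions $\mathcal{B}_1 w=\mathcal{B}_2 w=0$ collapse to $\Delta w=\partial_{\nu_2}\Delta w=0$ on $\Gamma_2$. Thus $w$ solves $\lambda^2 w-\Delta^2 w=0$ with all four Cauchy data $w,\ \partial_{\nu_2}w,\ \Delta w,\ \partial_{\nu_2}\Delta w$ vanishing on $\Gamma_2$, and the same unique continuation theorem applies to conclude $w\equiv 0$. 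Without this reduction of the natural boundary operators $\mathcal{B}_1,\mathcal{B}_2$ to genuine Cauchy data, your appeal to ``a standard unique continuation argument'' does not close the proof.
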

\begin{proof} Let $\lambda \in \R$ and let $U=(u,v,\eta,w,z,\xi,\zeta) \in D(\AA)$ such that 
	\begin{equation}\label{p5-A+BU=ilaU}
		\AA U=i\la U.
	\end{equation}Equivalently, we have the following system of equations
\begin{eqnarray}
	v&=&i\la u,\label{p5-f1k}  \ \ \text{in} \ \   \Omega_1, \\
	\Delta u&=& i\la v, \label{p5-f2k} \ \ \text{in} \ \   \Omega_1, \\
	v- \eta &=&i\la \eta ,\label{p5-f3k} \ \ \text{on} \ \   \Gamma_1, \\
	z&=&i\la w,\label{p5-f4k} \ \ \text{in} \ \   \Omega_2, \\
	-\Delta^2 w &=& i\la z,\label{p5-f5k} \ \ \text{in} \ \   \Omega_2, \\
	\partial_{\nu_2} z- \xi&=&i\la \xi ,\label{p5-f6k} \ \ \text{on} \ \   \Gamma_2, \\
	z- \zeta&=&i\la \zeta ,\label{p5-f7k} \ \ \text{on} \ \   \Gamma_2.
	\end{eqnarray}
By using equations \eqref{p5-A+BU=ilaU} and \eqref{p5-Adissipative}, a direct computation leads to the  following:

	\begin{equation*}
	0=\Re\left\{i \la \left\lVert U  \right\rVert^2_{\HH}\right\}=\Re\{\left(\AA U,U\right)_{\HH}\}=-\int_{\Gamma_1}|\eta|^2 d\Gamma-\int_{\Gamma_2}|\xi|^2d\Gamma-\int_{\Gamma_2}|\zeta|^2 d\Gamma,
	\end{equation*}
	which implies that 
	\begin{equation}\label{p5-etaxizeta}
\eta=0 \ \ \text{on} \ \ \Gamma_1 \ \ \ \text{and}\ \ \ \xi=\zeta=0 \ \ \text{on} \ \ \Gamma_2.
\end{equation}
It follows from \eqref{p5-domain} that 
\begin{equation}\label{p5-injcon1}
\partial_{\nu_1} u=0 \ \ \text{on} \ \ \Gamma_1 \ \ \ \text{and}\ \ \ \mathcal{B}_1 w=\mathcal{B}_2w=0 \ \ \text{on} \ \ \Gamma_2,
\end{equation}
and from \eqref{p5-f3k}, \eqref{p5-f6k} and \eqref{p5-f7k} that 
\begin{equation}\label{p5-injcon2}
v=0 \ \ \text{on} \ \ \Gamma_1 \ \ \ \text{and}\ \ \ z=\partial_{\nu_2} z=0 \ \ \text{on} \ \ \Gamma_2.
\end{equation}
Now, we need to consider two distinct cases:\\

\textbf{Case 1.} If $\la=0$, then $v=0 \ \text{in} \ \Omega_1$ and $z=0 \ \text{in} \ \Omega_2,$ and we obtain the following system 
\begin{eqnarray}
	-\Delta u&=&0, \label{p5-la01k} \ \ \text{in} \ \   \Omega_1 ,\\
	\Delta^2 w&=&0, \label{p5-la02k} \ \ \text{in} \ \   \Omega_2 ,\\
	\partial_{\nu_1} u &=&0,\label{p5-la03k} \ \ \text{on} \ \   \Gamma_1 ,\\
	\mathcal{B}_1 w=\mathcal{B}_2w&=&0,\label{p5-la04k} \ \ \text{on} \ \   \Gamma_2 ,
	\end{eqnarray}
with the following transmission conditions
\begin{equation}\label{p5-injtrans}
u=w, \ \ \mathcal{B}_1 w=0, \ \ \mathcal{B}_2w=\partial_{\nu_1}u \ \ \text{on} \ \ I.
\end{equation}
Multiplying equations \eqref{p5-la01k} and \eqref{p5-la02k} by $\overline{u}$ and $\overline{w}$ respectively, integrating over $\Omega_1$ and $\Omega_2,$ then using Green's formula and equations \eqref{p5-la03k} and \eqref{p5-la04k}, we get 
\begin{equation}\label{p5-injla0eq1}
	\int_{\Omega_1} |\nabla u|^2dx-	\int_{I} \partial_{\nu_1}u \overline{u} d\Gamma=0,  
\end{equation}
\begin{equation}\label{p5-injla0eq2}
 a(w,\overline{w})- \int_{I} \left( \mathcal{B}_1 w \partial_{\nu_2} \overline{w} - \mathcal{B}_2 w \overline{w} \right) d \Gamma=0. 
\end{equation}
Adding the resulting equations and taking \eqref{p5-injtrans} into consideration, we obtain
\begin{equation}\label{p5-injla0res}
 \int_{\Omega_1} |\nabla u|^2dx+a(w,\overline{w})=0,
\end{equation}
which leads to 
\begin{equation}\label{p5-injla0ass1}
u=0 \ \ \text{in} \ \ \Omega_1 \ \ \ \text{and} \ \ \ w=0 \ \ \text{in} \ \ \Omega_2.
\end{equation}
Hence, we get $$U=0.$$

 \textbf{Case 2.} If $\la \neq0,$ then using equations \eqref{p5-f1k}, \eqref{p5-f4k} and \eqref{p5-injcon2}, we get 
\begin{equation}\label{p5-injcon3}
u=0 \ \ \text{on} \ \ \Gamma_1 \ \ \ \text{and}\ \ \ w=\partial_{\nu_2} w=0 \ \ \text{on} \ \ \Gamma_2.
\end{equation}
Inserting \eqref{p5-f1k} in \eqref{p5-f2k}, we obtain
\begin{equation}
\left\{	\begin{array}{lll}
	\displaystyle	\la^2 u+\Delta u =0, & \text{in} \ \ \Omega_1,\vspace{0.15cm}\\
	\displaystyle	u=\partial_{\nu_1} u=0, & \text{on} \ \ \Gamma_1.
	\end{array}\right.
\end{equation}
Thus, from the above system and by using a unique continuation theorem (see \cite{COCV_2012__18_3_712_0}) , we obtain
\begin{equation}\label{p5-u0}
	u=0 \ \ \text{in} \ \ \Omega_1.
\end{equation}
Now, from \eqref{p5-injcon1}, we get 
\begin{equation}\label{p5-expB}
	\mathcal{B}_1 w=\Delta w+(1-\mu)\mathcal{C}_1w=0 \ \ \text{on} \ \ \Gamma_2 \ \ \text{and} \ \  \mathcal{B}_2 w=\partial_{\nu_2}\Delta w+(1-\mu)\partial_{\tau_2 }\mathcal{C}_2w=0 \ \ \text{on} \ \ \Gamma_2.
\end{equation}
Using \eqref{p5-injcon3} and the fact that $\nabla w= \partial_{\tau_2}w \tau_2 +\partial_{\nu_2}w \nu_2 \ \text{on} \ \Gamma_2 $, we obtain
\begin{equation}\label{p5-derivw}
	\nabla w =(w_{x_1},w_{x_2})=(0,0) \ \ \text{on} \ \ \Gamma_2 \times \Gamma_2 \ \ \text{and consequently} \ \ w_{x_1}=w_{x_2}=0 \ \ \text{on} \ \ \Gamma_2.
\end{equation}
From equations \eqref{p5-JEL}, \eqref{p5-injcon3} and \eqref{p5-derivw}, we get
\begin{equation}
	\mathcal{C}_1w=\mathcal{C}_2w =0 \ \ \text{on} \ \ \Gamma_2,
\end{equation}
consequently, from \eqref{p5-expB}, we get
\begin{equation}
	\Delta w =\partial_{\nu_2} \Delta w =0 \ \ \text{on} \ \ \Gamma_2.
		\end{equation}
Inserting \eqref{p5-f4k} into \eqref{p5-f5k}, we obtain
\begin{equation}\label{p5-1.28}
\left\{\begin{array}{lll}
	\displaystyle \la^2 w-\Delta^2 w=0, & \text{in} \ \ \Omega_2,\vspace{0.15cm}\\
\displaystyle	w=\partial_{\nu_2}w=\Delta w=\partial_{\nu_2 }\Delta w=0, & \text{on} \ \ \Gamma_2.
\end{array}\right.
\end{equation}
Again, from the above system and by using a unique continuation theorem (see \cite{COCV_2012__18_3_712_0}), we obtain
\begin{equation}\label{p5-w0}
	w=0 \ \ \text{in} \ \ \Omega_2.
\end{equation}
Consequently, from equations  \eqref{p5-f1k}, \eqref{p5-f4k}, \eqref{p5-etaxizeta}, \eqref{p5-u0} and \eqref{p5-w0}, we get
$$
U=0.
$$
The proof is thus complete.
\
		\end{proof}
\begin{rk}
	 The preceding result is indeed true with some slight modifications, where only two dynamical controls $\xi$ and $\zeta$ are applied. First, one can show that $w=0$ in $\Omega_2.$ Then, due to some useful information provided by the interface, we obtain that $u=0$ in $\Omega_1.$ The transmission conditions play an essential role in this case. However, this may happen under certain geometric conditions in the case where only one dynamical control $\eta$ is applied. In such cases, we get $u=0$ in $\Omega_1$, but the transmission conditions introduced are not enough to conclude that $w=0$ in $\Omega_2$.

\end{rk}
\begin{lem}\label{p5-sur}
	 Assume that Assumptions \ref{anglesomega1} and \ref{anglesomega2} hold. Then, the operator $i \la \mathcal{I} -\AA$ is surjective for all real number $\la \in \R.$
\end{lem}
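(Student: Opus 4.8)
### Proof Plan for Lemma \ref{p5-sur}

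The plan is to mimic the structure of the maximality argument already carried out in the proof of Proposition \ref{p5-aplusbmdissip}, with the scalar $1$ replaced by $i\lambda$. Given $F=(f^1,g^1,h^1,f^2,g^2,h^2,h^3)\in\HH$ and $\lambda\in\R$, we seek $U=(u,v,\eta,w,z,\xi,\zeta)\in D(\AA)$ with $(i\lambda\,\mathcal I-\AA)U=F$. Writing this out componentwise gives $v=i\lambda u-f^1$ in $\Omega_1$, $z=i\lambda w-f^2$ in $\Omega_2$, and, from the ODE-type components, $\eta$, $\xi$, $\zeta$ expressed algebraically in terms of $v,z$ and the data whenever $i\lambda+1\neq0$, i.e. $\eta=\dfrac{v+h^1}{i\lambda+1}$ on $\Gamma_1$, $\xi=\dfrac{\partial_{\nu_2}z+h^2}{i\lambda+1}$ and $\zeta=\dfrac{z+h^3}{i\lambda+1}$ on $\Gamma_2$; note $i\lambda+1\neq0$ always holds for real $\lambda$. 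Substituting back into the $\Delta u$ and $\Delta^2 w$ equations, $u$ and $w$ must solve an elliptic transmission system of the form $-\Delta u + c_1(\lambda) u = \tilde F^1$ in $\Omega_1$, $\Delta^2 w + c_2(\lambda) w = \tilde F^2$ in $\Omega_2$ (with $c_1(\lambda)=-\lambda^2$, $c_2(\lambda)=-\lambda^2$ roughly, after eliminating $v,z$), together with the transmission conditions $u=w$, $\mathcal B_1 w=0$, $\mathcal B_2 w=\partial_{\nu_1}u$ on $I$, and Robin-type conditions on $\Gamma_1,\Gamma_2$ inherited from the definition of $D(\AA)$ and the eliminated controls.

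The core step is to solve this elliptic system variationally on the space $\mathsf H$. I would set up a sesquilinear form $b_\lambda((u,w),(\varphi,\psi))$ on $\mathsf H\times\mathsf H$, obtained by multiplying the first equation by $\overline\varphi$, the second by $\overline\psi$, integrating over $\Omega_1$ resp. $\Omega_2$, applying Green's formula \eqref{GF}, and adding, so that the interface terms cancel exactly as in \eqref{p5-disseq1}--\eqref{p5-vf11}. Unlike the $\lambda=0$ case, $b_\lambda$ need not be coercive because of the $-\lambda^2$ terms, so Lax--Milgram is not directly available. Instead I would use the Fredholm alternative: write $b_\lambda = b_0 + (\text{compact perturbation})$, where $b_0$ is the coercive form from Proposition \ref{p5-aplusbmdissip}'s proof and the perturbation is controlled by the compact embeddings $H^1(\Omega_1)\hookrightarrow L^2(\Omega_1)$ and $H^2(\Omega_2)\hookrightarrow H^1(\Omega_2)\hookrightarrow L^2(\Omega_2)$ (these also control the $\Gamma$-boundary terms via compact trace embeddings). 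Hence existence of a solution is equivalent to uniqueness, i.e. to triviality of the kernel.

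The kernel of $(i\lambda\,\mathcal I-\AA)$ is exactly the eigenspace handled in Lemma \ref{p5-inj}, which is $\{0\}$ under Assumptions \ref{anglesomega1}--\ref{anglesomega2}: indeed any $U$ in the kernel lies in $D(\AA)$, satisfies $\AA U=i\lambda U$, and Lemma \ref{p5-inj} gives $U=0$. Therefore the Fredholm alternative yields a unique variational solution $(u,w)\in\mathsf H$. The remaining work is the regularity/recovery bookkeeping, identical in spirit to Proposition \ref{p5-aplusbmdissip}: testing with $\varphi\in C^\infty_0(\Omega_1)$, $\psi=0$ shows $\Delta u\in L^2(\Omega_1)$ and the PDE holds in $\Omega_1$; testing with $\varphi=0$, $\psi\in C^\infty_0(\Omega_2)$ shows $\Delta^2 w\in L^2(\Omega_2)$; then testing against functions in $H^1_{\ast,I}(\Omega_1)$ resp. $H^2_{\ast,I}(\Omega_2)$ and using density of the trace spaces recovers the boundary conditions on $\Gamma_1,\Gamma_2$; finally testing against functions supported near $I$ recovers the transmission conditions. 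Setting $v:=i\lambda u-f^1$, $\eta:=-\partial_{\nu_1}u$, $z:=i\lambda w-f^2$, $\xi:=-\mathcal B_1 w$, $\zeta:=\mathcal B_2 w$ produces $U\in D(\AA)$ solving the equation.

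The main obstacle is the loss of coercivity for $\lambda\neq0$: one must justify carefully that $b_\lambda$ decomposes as coercive-plus-compact on $\mathsf H$ (in particular that the boundary-integral lower-order terms and the $\lambda^2$-terms, including those hidden inside the eliminated $\eta,\xi,\zeta$, are compact perturbations), so that the Fredholm alternative applies and uniqueness from Lemma \ref{p5-inj} can be leveraged. The rest is a routine repetition of the argument already given for $\mathcal I-\AA$.
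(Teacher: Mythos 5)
Your proposal is correct and follows essentially the same route as the paper: eliminate $v,z,\eta,\xi,\zeta$ (using $i\lambda+1\neq 0$), pass to a variational transmission problem on $\mathsf H$, split the form into a coercive part plus the compact perturbation coming from the $-\lambda^2$ zero-order terms so that the Fredholm alternative applies, invoke Lemma \ref{p5-inj} for injectivity, and then recover the boundary/transmission conditions and the components of $U\in D(\AA)$ exactly as in the maximality step of Proposition \ref{p5-aplusbmdissip}. The only cosmetic difference is in where the boundary integrals are placed (the paper keeps them, with coefficient $i\lambda\Lambda$ whose real part is nonnegative, inside the coercive form $b_\lambda$, rather than in the compact remainder), which does not change the argument.
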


\begin{proof}
For any given $F=(f^1,g^1,h^1,f^2,g^2,h^2,h^3)\in \HH,$ we are looking for $U=(u,v,\eta,w,z,\xi,\zeta) \in D(\mathcal{A})$ that solves the following system:
\begin{equation}\label{p5-surilaI}
(i \lambda \mathcal{I} - \AA )U = F,
\end{equation}
which is equivalent to the following equations:
\begin{eqnarray}
	i \la u-v&=&f^1,\label{p5-surf1k} \ \ \text{in} \ \   \Omega_1, \\
	i \la v - \Delta u&=& g^1, \label{p5-surf2k} \ \ \text{in} \ \   \Omega_1, \\
	i \la \eta -v+ \eta &=&h^1,\label{p5-surf3k} \ \ \text{on} \ \   \Gamma_1, \\
	i \la w -z&=&f^2,\label{p5-surf4k} \ \ \text{in} \ \   \Omega_2,\\
	i \la z + \Delta^2 w &=& g^2,\label{p5-surf5k} \ \ \text{in} \ \   \Omega_2,\\
	i \la \xi -\partial_{\nu_2} z+\xi&=&h^2,\label{p5-surf6k} \ \ \text{on} \ \   \Gamma_2,\\
	i \la \zeta -z+ \zeta&=&h^3,\label{p5-surf7k} \ \ \text{on} \ \   \Gamma_2.
	\end{eqnarray}
By eliminating $v, \ z, \ \eta, \ \xi$ and $\zeta$ from the above equations and using \eqref{p5-domain}, we obtain the following system:
\begin{equation}\label{p5-strongsur}
\left\{	\begin{array}{llll}
\displaystyle  \la^2u+\Delta u =-(i \la f^1+g^1), & \text{in} \ \   \Omega_1,\vspace{0.15cm}\\
\displaystyle \la^2w-\Delta^2 w =-(i \la f^2+g^2), & \text{in} \ \  \Omega_2,\vspace{0.15cm}\\
\displaystyle u=w, \ \ \mathcal{B}_1 w=0, \ \ \mathcal{B}_2w=\partial_{\nu_1}u, & \text{on} \ \ I,\vspace{0.15cm}\\
\displaystyle \partial_{\nu_1} u + \dfrac{i \la}{i \la+1} u =\dfrac{1}{i \la +1} (f^1-h^1), & \text{on} \ \ \Gamma_1,\vspace{0.15cm}\\ 
\displaystyle \mathcal{B}_1 w+ \dfrac{i \la}{i \la+1}\partial_{\nu_2} w =\dfrac{1}{i \la+1}(\partial_{\nu_2} f^2-h^2), & \text{on} \ \ \Gamma_2, \vspace{0.15cm}\\
\displaystyle \mathcal{B}_2w- \dfrac{i \la}{i \la+1}w=\dfrac{1}{i \la+1}(h^3-f^2), & \text{on} \ \ \Gamma_2.
\end{array}
\right.
\end{equation}
Let $\phi=(\varphi,\psi)\in \mathsf{H}.$ Multiplying the first equation of \eqref{p5-strongsur} by $\overline{\varphi}$ and integrating over $\Omega_1$, 
 and multiplying the second equation of \eqref{p5-strongsur} by  $\overline{\psi}$ and integrating over $\Omega_2$, we use Green's formula to get the following equations:
\begin{equation}\label{surj1}
-\la^2\int_{\Omega_1} u \overline{\varphi}dx+	\int_{\Omega_1 } \nabla u \cdot \nabla \overline{\varphi}dx+ \frac{i\la}{i\la+1} \int_{\Gamma_1} u \overline{\varphi} d \Gamma-\int_I \partial_{\nu_1}u \overline{\varphi}d \Gamma=\int_{\Omega_1} F^1\overline{\varphi}dx+\frac{1}{i\la+1} \int_{\Gamma_1 }F^2 \overline{\varphi}d\Gamma,  
\end{equation}
and 
\begin{equation}\label{surj2}
\begin{split}
	&-\la^2\int_{\Omega_2} w \overline{\psi}dx+ a(w,\overline{\psi})+ \frac{i\la}{i\la+1} \int_{\Gamma_2} \left(\partial_{\nu_2} w \partial_{\nu_2} \overline{\psi} + w \overline{\psi} \right)d \Gamma-\int_I \left(\mathcal{B}_1 w \partial_{\nu_2} \overline{\psi} - \mathcal{B}_2 w \overline{\psi}\right)d \Gamma\\
	&\hspace{1cm}=\int_{\Omega_2} F^3\overline{\psi}dx+\frac{1}{i\la+1}\int_{\Gamma_2 }F^4 \partial_{\nu_2} \overline{\psi}d\Gamma+\frac{1}{i\la+1}\int_{\Gamma_2 }F^5 \overline{\psi}d\Gamma,  
\end{split}
\end{equation}
where 
$$F^1=i\la f^1+g^1, \quad F^2=f^1-h^1, \quad  F^3=i\la f^2+g^2, \quad F^4=\partial_{\nu_2} f^2-h^2 \quad \text{and} \quad F^5=f^2-h^3.$$
Adding equations \eqref{surj1} and \eqref{surj2} and using the transmission conditions, we obtain 
\begin{equation}\label{surj3}
\begin{split}
&-\la^2\left(\int_{\Omega_1} u \overline{\varphi}dx + \int_{\Omega_2} w \overline{\psi}dx\right)+	\int_{\Omega_1 } \nabla u \cdot \nabla \overline{\varphi}dx+  i\la \Lambda  \int_{\Gamma_1} u \overline{\varphi}d\Gamma  \\&+a(w,\overline{\psi})+i\la \Lambda \int_{\Gamma_2} \left(\partial_{\nu_2} w \partial_{\nu_2} \overline{\psi} + w \overline{\psi} \right)d \Gamma
\\&=\int_{\Omega_1} F^1\overline{\varphi}dx+\Lambda \int_{\Gamma_1 }F^2 \overline{\varphi}d\Gamma + \int_{\Omega_2} F^3\overline{\psi}dx+ \Lambda\int_{\Gamma_2 }F^4 \partial_{\nu_2} \overline{\psi}d\Gamma+\Lambda\int_{\Gamma_2 }F^5 \overline{\psi}d\Gamma,  
\end{split}
\end{equation}
where 
\begin{equation}
\Lambda= \frac{1-i\la}{\la^2+1}.
\end{equation}
Here we note that Lax-Milgram's theorem cannot be applied because coercivity is not available. Therefore, we use a compact perturbation argument. For that purpose, let us introduce the sesquilinear form
\begin{equation}\label{p5-vf}
\begin{split}
& b_\la ((u,w), (\varphi,\psi))= \int_{\Omega_1 } \nabla u \cdot \nabla \overline{\varphi}dx+i\la \Lambda \int_{\Gamma_1} u \overline{\varphi}d\Gamma  
+a(w,\overline{\psi}) \\
&\hspace{1cm} +i\la \Lambda \int_{\Gamma_2} \left(\partial_{\nu_2} w \partial_{\nu_2} \overline{\psi} + w \overline{\psi} \right)d \Gamma, \quad    \forall \, (\varphi,\psi)\in \mathsf{H}.
\end{split}
\end{equation}
This sesquilinear form $b_\lambda$ is continuous and coercive on $ \mathsf{H}\times  \mathsf{H}$. Then, by Lax-Milgram's theorem, the operator 
\begin{equation} 
B_{\lambda}: \mathsf{H} \rightarrow \mathsf{H}^{\prime}: \boldsymbol{(u,w)} \rightarrow B_{\lambda}\boldsymbol{(u,w)},\nonumber
\end{equation} 
with 
$
B_{\lambda}(\boldsymbol{u},\boldsymbol{w})((\boldsymbol{\varphi}, \boldsymbol{\psi})) := b_{\lambda}((\boldsymbol{u},\boldsymbol{w}),(\boldsymbol{\varphi},\boldsymbol{\psi})) \ \ \forall (\varphi,\psi) \in \mathsf{H},
$ is an isomorphism, where $\mathsf{H}^{\prime}$ is the dual space of $\mathsf{H}.$ Now, let us set
\begin{equation*} 
R_{\lambda}:  \mathsf{H} \rightarrow \mathsf{H}^{\prime}: (\boldsymbol{u},\boldsymbol{w}) \rightarrow R_{\lambda}(\boldsymbol{u},\boldsymbol{w}), 
 \end{equation*}
 with 
 \begin{equation*}  
 R_{\lambda}(\boldsymbol{u},\boldsymbol{w})((\boldsymbol{\varphi},\boldsymbol{\psi}))= -\la^2\left(\int_{\Omega_1} u \overline{\varphi}dx + \int_{\Omega_2} w \overline{\psi}dx\right), \quad    \forall \, (\varphi,\psi)\in \mathsf{H}.
 \end{equation*}
As $R_{\lambda}$ is a compact operator, we deduce that $B_{\lambda}+R_{\lambda}$ is a Fredholm operator of index zero from $\mathsf{H}$ to $\mathsf{H}^{\prime}.$
Now by setting
  \begin{equation*} 
L_{\lambda}(\varphi,\psi)=\int_{\Omega_1} F^1\overline{\varphi}dx+ \Lambda \int_{\Gamma_1 }F^2 \overline{\varphi}d\Gamma + \int_{\Omega_2} F^3\overline{\psi}dx+\Lambda\int_{\Gamma_2 }F^4 \partial_{\nu_2} \overline{\psi}d\Gamma+\Lambda\int_{\Gamma_2 }F^5 \overline{\psi}d\Gamma,
 \end{equation*} 
we notice that  \eqref{surj3} is equivalent to 
 \begin{equation}\label{surj4}
 (B_{\lambda}+R_{\lambda})(\boldsymbol{u},\boldsymbol{w})=L_{\lambda}\quad  \textrm{in} \,\, \mathsf{H}^{\prime}.
 \end{equation}\\ 
 Hence, problem \eqref{surj3} admits a unique solution $(\boldsymbol{u},\boldsymbol{w}) \in \mathsf{H}$ if and only if $B_{\lambda}+R_{\lambda}$ is invertible. Since $B_{\lambda}+R_{\lambda}$ is a Fredholm operator, it is enough to prove that $B_{\lambda}+R_{\lambda}$ is injective, i.e., \begin{equation} \ker(B_{\lambda}+R_{\lambda})=\{0\}.\nonumber
 \end{equation}
Let us now fix $(\boldsymbol{u},\boldsymbol{w}) \in\ker(B_{\lambda}+R_{\lambda})$, which means it satisfies
\begin{equation}\label{surj5}
\begin{split}
&-\la^2\left(\int_{\Omega_1} \boldsymbol{u} \overline{\boldsymbol{\varphi}}dx + \int_{\Omega_2} \boldsymbol{w} \overline{\boldsymbol{\psi}}dx\right)+	\int_{\Omega_1 } \nabla \boldsymbol{u} \cdot \nabla \overline{\boldsymbol{\varphi}}dx+  i\la\Lambda  \int_{\Gamma_1} \boldsymbol{u} \overline{\boldsymbol{\varphi}}d\Gamma  \\
&+a(\boldsymbol{w},\overline{\boldsymbol{\psi}})+ i\la\Lambda \int_{\Gamma_2} \left(\partial_{\nu_2} \boldsymbol{w} \partial_{\nu_2} \overline{\boldsymbol{\psi}} + \boldsymbol{w} \overline{\boldsymbol{\psi}} \right)d \Gamma=0, \quad \forall \, (\boldsymbol{\varphi},\boldsymbol{\psi})
\in  \mathsf{H}.
\end{split}
\end{equation}
Thus, if we set 
$$\boldsymbol{v}=i\la \boldsymbol{u}, \quad \boldsymbol{\eta}=i\la \Lambda  \boldsymbol{u}, \quad \boldsymbol{z}=i\la \boldsymbol{w}, \quad 
\boldsymbol{\xi}=i\la \Lambda \partial_{\nu_2}\boldsymbol{w} \quad \text{and} \quad \boldsymbol{\zeta}=i\la \Lambda \boldsymbol{w},$$
 we can conclude that $ \boldsymbol{U}=(\boldsymbol{u},\boldsymbol{v},\boldsymbol{\eta},\boldsymbol{w},\boldsymbol{z},\boldsymbol{\xi},\boldsymbol{\zeta})\in D(\mathcal{A})$ is a solution of 
\begin{equation}(i\lambda-\mathcal{A})\boldsymbol{U}=0.\nonumber
 \end{equation} 
 Using Lemma \ref{p5-inj}, we deduce that $\boldsymbol{U}=0.$ This shows that $B_{\lambda}+R_{\lambda}$ is invertible, and therefore a unique solution $(u,w) \in \mathsf{H}$  of $(\ref{surj4})$ exists. At this stage, by setting 
 $$v=i\la u-f^1, \quad \eta=i\la \Lambda u + \Lambda (h^1 - f^1), \quad z=i\la w-f^2,$$
$$\xi=i\la \Lambda \partial_{\nu_2} w + \Lambda (h^2 - \partial_{\nu_2} f^2) \quad \text{and} \quad \zeta= i\la \Lambda w + \Lambda (h^3 - f^2),$$
 we conclude that $(u,v,\eta,w,z,\xi,\zeta)\in D(\mathcal{A})$ is a solution of \eqref{p5-surilaI} and the proof is thus complete.
\end{proof}
\\

\noindent \textbf{Proof of Theorem \ref{p5-strong}.}
Using Lemma \ref{p5-inj}, we conclude that the operator $\mathcal{A}$ has no pure imaginary eigenvalues. Additionally, by Lemma \ref{p5-sur}, we have $ Im (i\lambda \mathcal{I}- \AA)=\HH$ for all real numbers $\lambda \in \mathbb{R}.$  Therefore, the closed graph theorem of Banach implies that $\sigma(\mathcal{A}) \cap i\mathbb{R} = \emptyset.$ Following Arendt-Batty (see \cite{arendt1988tauberian}), the $C_{0}$-semigroup of contractions $(e^{t\mathcal{A}})_{t \geq 0 }$ is strongly stable and the proof is complete.

\section{Lack of Exponential Stability}\label{NonexponentialStabilityWave-Plateme}
In this section, we will prove that the system \eqref{p5-sys2}-\eqref{p5-diff} is not exponentially stable. Concerning the characterization of exponential stability of a $C_0-$semigroup of contractions, we rely on the following result due to Huang \cite{huang1985characteristic} and 
 Prüss \cite{10.2307/1999112}:
\begin{Thm}\label{Caractwaveplate}
		Let $A:\ D(A)\subset H\longrightarrow H$ generate a $C_0-$semigroup of contractions $\left(e^{tA}\right)_{t\geq0}$ on $H$. Then, the $C_0-$semigroup $\left(e^{tA}\right)_{t\geq0}$ is exponentially stable if and only if $i\R \subset\rho(A)$ and
		$$
		\limsup_{\la\in \R,\ \abs{\la}\rightarrow \infty}\|(i\la \mathcal{I}-A)^{-1}\|_{\mathcal{L}(H)}<\infty.
		$$
\end{Thm}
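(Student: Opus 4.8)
This is the Huang--Prüss frequency-domain characterization of exponential stability, valid because $H$ is a Hilbert space, and the plan is to treat the two implications separately. The necessity is elementary: if $\left(e^{tA}\right)_{t\ge 0}$ is exponentially stable, say $\|e^{tA}\|_{\mathcal L(H)}\le M e^{-\omega t}$ with $M,\omega>0$, then for every $s$ with $\mathrm{Re}\,s>-\omega$ the integral $\int_0^{\infty}e^{-st}e^{tA}\,dt$ converges in $\mathcal L(H)$ and represents $(sI-A)^{-1}$; in particular $i\R\subset\rho(A)$ and $\|(i\la I-A)^{-1}\|\le M/\omega$ for all $\la\in\R$, which is stronger than the stated $\limsup$ bound.

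For the sufficiency --- the implication actually used in the sequel --- I would argue in three steps. First, combining $i\R\subset\rho(A)$ with the $\limsup$ hypothesis and the continuity of $\la\mapsto(i\la I-A)^{-1}$ on the compact parts of $i\R$, set $M_0:=\sup_{\la\in\R}\|(i\la I-A)^{-1}\|<\infty$. A Neumann-series argument then enlarges the resolvent set: for $s=\sigma+i\la$ one writes $sI-A=(i\la I-A)\bigl(I+\sigma(i\la I-A)^{-1}\bigr)$, so that $\{\,s:|\mathrm{Re}\,s|<1/M_0\,\}\subset\rho(A)$ with $\sup_{\mathrm{Re}\,s\ge-\gamma}\|(sI-A)^{-1}\|<\infty$ for every $0<\gamma<1/M_0$; fix such a $\gamma$. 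Second --- the heart of the matter --- one invokes the Hilbert-space Plancherel theorem: for $x\in D(A^2)$ the Laplace inversion $e^{tA}x=\frac1{2\pi i}\int_{(c)}e^{st}(sI-A)^{-1}x\,ds$ (with $c>0$) converges absolutely since $(sI-A)^{-1}x=O(|s|^{-2})$, and the contour may be shifted into the strip to the line $\mathrm{Re}\,s=-\gamma/2$, because the resolvent is analytic and uniformly $O(|s|^{-2})$-small there; this yields $\|e^{tA}x\|\le C_x e^{-\gamma t/2}$ on the dense set $D(A^2)$ (equivalently, via Plancherel, every orbit lies in $L^2(\R_+;H)$). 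Third, one upgrades this to a uniform estimate $\|e^{tA}\|_{\mathcal L(H)}\le Ce^{-\delta t}$ valid on all of $H$, either by a uniform boundedness argument combined with a density/regularization step, or directly by Datko's theorem (square-integrability of every orbit of a $C_0$-semigroup forces exponential stability).

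I expect the genuinely delicate point to be the second and third steps: the resolvent on $i\R$ is merely \emph{bounded}, not decaying or integrable, so the crude estimate of the inversion integral diverges and one must exploit the Hilbert-space Plancherel identity --- which is precisely where the hypothesis ``$H$ a Hilbert space'' is indispensable, the statement being false on general Banach spaces --- and one must then pass from the dense domain $D(A^2)$ to all of $H$ without losing uniformity. Since Theorem~\ref{Caractwaveplate} is used here only as a black box, it suffices for the purposes of the paper to quote \cite{huang1985characteristic} and \cite{10.2307/1999112}.
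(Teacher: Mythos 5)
The paper does not prove Theorem \ref{Caractwaveplate} at all: it is stated as a known characterization and used as a black box, with the proof delegated to the cited works of Huang and Pr\"uss. So there is no internal argument to compare yours against; your closing remark that quoting \cite{huang1985characteristic} and \cite{10.2307/1999112} suffices is exactly what the authors do. That said, your sketch is the standard Gearhart--Huang--Pr\"uss argument and its architecture is sound: the Laplace-transform representation of the resolvent for necessity; the Neumann-series extension of the resolvent to a strip $|\mathrm{Re}\,s|<1/M_0$; the Hilbert-space Plancherel step; and Datko's theorem to pass from square-integrable orbits to uniform exponential decay. You also correctly identify where the Hilbert-space hypothesis enters and why the statement fails on general Banach spaces.

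One technical point would need repair if the proof were written out in full. For $x\in D(A^2)$ one has $(sI-A)^{-1}x=\tfrac{x}{s}+\tfrac{Ax}{s^2}+\tfrac{1}{s^2}(sI-A)^{-1}A^2x$, so the resolvent applied to $x$ is only $O(|s|^{-1})$ along a vertical line, not $O(|s|^{-2})$ as you assert; consequently the inversion integral does \emph{not} converge absolutely, and the term $x/s$ must be handled separately (by explicit computation or as a principal value) before the contour can be shifted. This does not sink the argument --- the remainder after subtracting $x/s$ is genuinely $O(|s|^{-2})$ uniformly in the strip, and your alternative route via Plancherel applied to $t\mapsto e^{-\epsilon t}e^{tA}x$ followed by Datko avoids the issue entirely --- but the absolute-convergence claim as stated is false and is precisely the kind of detail the classical references are careful about.
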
 The main result of this section is the following theorem:

\begin{Thm}\label{nonexsthwaveplate}
The $C_0-$semigroup  of contractions $(e^{t\AA})_{t \geq 0}$ is not uniformly stable in the energy space $\HH$.
\end{Thm}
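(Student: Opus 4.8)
The plan is to use the Huang--Prüss characterization, Theorem \ref{Caractwaveplate}. Since the proof of Theorem \ref{p5-strong} already gives $i\R\subset\rho(\AA)$, it is enough to show that the resolvent is not uniformly bounded along the imaginary axis; equivalently, I would exhibit a sequence $(\mu_n)_n\subset\R_+$ with $\mu_n\to+\infty$ and a sequence $(U_n)_n\subset D(\AA)$ with $\|U_n\|_\HH=1$ such that $\|(i\mu_n\mathcal{I}-\AA)U_n\|_\HH\to 0$, which forces $\|(i\mu_n\mathcal{I}-\AA)^{-1}\|_{\mathcal L(\HH)}\to\infty$ and contradicts exponential stability.

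The guiding observation is that, in the frequency domain, eliminating the dynamical controls exactly as in the proof of Lemma \ref{p5-sur} turns the feedback on $\Gamma_1$ into the Robin-type relation $\partial_{\nu_1}u+\frac{i\lambda}{i\lambda+1}\,u|_{\Gamma_1}=0$, and similarly on $\Gamma_2$, whose coefficients tend to $1$ as $|\lambda|\to\infty$. Accordingly, introduce the \emph{conservative} wave--plate transmission operator associated with the limiting conditions $\partial_{\nu_1}\phi+\phi=0$ on $\Gamma_1$, $\mathcal{B}_1\psi+\partial_{\nu_2}\psi=0$ and $\mathcal{B}_2\psi-\psi=0$ on $\Gamma_2$ (with the same transmission conditions on $I$). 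The bilinear form $\int_{\Omega_1}|\nabla\phi|^2+\int_{\Gamma_1}|\phi|^2+a(\psi,\overline\psi)+\int_{\Gamma_2}(|\partial_{\nu_2}\psi|^2+|\psi|^2)$ is continuous and coercive on $\mathsf{H}$, and $\mathsf{H}$ embeds compactly into $L^2(\Omega_1)\times L^2(\Omega_2)$; hence there is an orthogonal basis of eigenfunctions $(\phi_n,\psi_n)$ with eigenvalues $\mu_n^2\to+\infty$, solving $\Delta\phi_n+\mu_n^2\phi_n=0$, $\Delta^2\psi_n-\mu_n^2\psi_n=0$ together with the above boundary conditions and the natural interface relations $\mathcal{B}_1\psi_n=0$, $\mathcal{B}_2\psi_n=\partial_{\nu_1}\phi_n$ on $I$ (the boundary regularity needed for all these traces to be meaningful being exactly what is established in Section \ref{regularitytrans} under Assumptions \ref{anglesomega1}--\ref{anglesomega2}).

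Next I would set
$$U_n=\Big(\phi_n,\ i\mu_n\phi_n,\ -\partial_{\nu_1}\phi_n,\ \psi_n,\ i\mu_n\psi_n,\ -\mathcal{B}_1\psi_n,\ \mathcal{B}_2\psi_n\Big),$$
normalized so that $\|U_n\|_\HH=1$. By construction $U_n$ satisfies the boundary relations in $D(\AA)$, and the interface relations hold by the natural conditions above, so $U_n\in D(\AA)$. A direct computation using $\Delta\phi_n=-\mu_n^2\phi_n$, $\Delta^2\psi_n=\mu_n^2\psi_n$ and the Robin identities $\partial_{\nu_1}\phi_n=-\phi_n|_{\Gamma_1}$, $\mathcal{B}_1\psi_n=-\partial_{\nu_2}\psi_n|_{\Gamma_2}$, $\mathcal{B}_2\psi_n=\psi_n|_{\Gamma_2}$ collapses $(i\mu_n\mathcal{I}-\AA)U_n$ to its boundary components,
$$(i\mu_n\mathcal{I}-\AA)U_n=\big(0,\ 0,\ \phi_n|_{\Gamma_1},\ 0,\ 0,\ \partial_{\nu_2}\psi_n|_{\Gamma_2},\ \psi_n|_{\Gamma_2}\big),$$
so $\|(i\mu_n\mathcal{I}-\AA)U_n\|_\HH^2=\|\phi_n\|_{L^2(\Gamma_1)}^2+\|\partial_{\nu_2}\psi_n\|_{L^2(\Gamma_2)}^2+\|\psi_n\|_{L^2(\Gamma_2)}^2$. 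Testing the two PDEs against $\phi_n$ and $\psi_n$ and adding, the interface and Robin terms combine to give $\|U_n\|_\HH^2=2\mu_n^2\big(\|\phi_n\|_{L^2(\Omega_1)}^2+\|\psi_n\|_{L^2(\Omega_2)}^2\big)$, so the normalization forces $\|\phi_n\|_{L^2(\Omega_1)}+\|\psi_n\|_{L^2(\Omega_2)}\lesssim\mu_n^{-1}$ while $\|\phi_n\|_{H^1(\Omega_1)}+\|\psi_n\|_{H^2(\Omega_2)}\lesssim 1$. Interpolating ($\|\phi_n\|_{H^{3/4}(\Omega_1)}\lesssim\|\phi_n\|_{L^2}^{1/4}\|\phi_n\|_{H^1}^{3/4}$, and similarly for $\psi_n$ and $\nabla\psi_n$) and using the trace embedding $H^s(\Omega)\hookrightarrow L^2(\partial\Omega)$ for $s>\tfrac12$ yields $\|\phi_n\|_{L^2(\Gamma_1)}+\|\psi_n\|_{L^2(\Gamma_2)}+\|\partial_{\nu_2}\psi_n\|_{L^2(\Gamma_2)}\lesssim\mu_n^{-1/8}$. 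Hence $\|(i\mu_n\mathcal{I}-\AA)U_n\|_\HH\to 0$ with $\|U_n\|_\HH=1$ and $\mu_n\to\infty$, so $\|(i\mu_n\mathcal{I}-\AA)^{-1}\|_{\mathcal L(\HH)}\to\infty$ and Theorem \ref{Caractwaveplate} shows $(e^{t\AA})_{t\ge0}$ is not exponentially stable.

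The step I expect to need the most care is checking that the lifted family really lies in $D(\AA)$: one must know that the eigenfunctions of the conservative transmission operator carry enough boundary regularity for the normal-derivative traces and for the fourth-order boundary operators $\mathcal{B}_1,\mathcal{B}_2$ applied to $\psi_n$ to be well defined on $\Gamma_2$ and on $I$ and to satisfy there the prescribed transmission identities --- precisely the kind of statement proved in Section \ref{regularitytrans}. A second, more conceptual, point is that the reference operator must be the one with Robin coefficient equal to $1$: any other self-adjoint choice (for instance Neumann, $\partial_{\nu_1}\phi_n=0$) produces a residual of size $\sim\mu_n\|\phi_n\|_{L^2(\Gamma_1)}$, which does not decay, so the exact weight inherited from the damping terms $+\eta,+\xi,+\zeta$ in \eqref{p5-diff} is essential to the argument.
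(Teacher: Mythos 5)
Your proposal is correct and follows essentially the same route as the paper: the same auxiliary self-adjoint transmission operator with Robin coefficient equal to $1$ (the paper's $\mathcal{O}_{\Delta,R}$ of Lemma \ref{nonex1wp}), the same eigenfunction-based quasimode sequence (your $U_n$ is, up to the scalar factor $i\mu_n$ and normalization, exactly the paper's choice \eqref{choices}), and the same trace-interpolation estimate to show the residual vanishes. The only differences are cosmetic — your interpolation through $H^{3/4}$ gives the slightly weaker rate $\mu_n^{-1/8}$ where the paper obtains $\mu_n^{-1/4}$, but both suffice.
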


\noindent According to Theorem \ref{Caractwaveplate} due to Huang \cite{huang1985characteristic} and 
 Prüss \cite{10.2307/1999112}, it is sufficient to prove that the resolvent of the operator $\AA$ is not uniformly bounded on the imaginary axis. 
For this aim, let us start with the following technical lemma:

\begin{lem}\label{nonex1wp}
Define the linear unbounded operator $\mathcal{O}_{\Delta,R} :D\left(\mathcal{O}_{\Delta,R} \right) \longmapsto L^2(\Omega_1)\times L^2(\Omega_2)$ by

\begin{equation}\label{defDTwp}
\begin{split}
D\left(\mathcal{O}_{\Delta,R} \right)=\left\{\begin{array}{ll}\vspace{0.25cm}(f,g) \in \mathsf{H} \ | \  \left(\Delta f ,\Delta^2 g\right)  \in L^{2}(\Omega_1)\times L^{2}(\Omega_2),   \ \ \partial_{\nu_1} f +f =0 \ \ \text{on} \ \ \Gamma_1, \\\vspace{0.25cm}  \mathcal{B}_1 g+ \partial_{\nu_2} g =0 \ \ \text{on} \ \ \Gamma_2,  \ \ \mathcal{B}_2 g- g =0 \ \ \text{on} \ \ \Gamma_2,  \ \  \mathcal{B}_1 g=0  \ \ \text{and} \ \ \mathcal{B}_2 g=\partial_{\nu_1} f \ \ \text{on} \ \ I \end{array}\right\},
\end{split}
\end{equation}
and

\begin{equation}
\label{defimageTwp}
\mathcal{O}_{\Delta,R} (f,g) =\left(-\Delta f, \Delta^2 g\right), \ \ \forall \, (f,g) \in D\left(\mathcal{O}_{\Delta,R} \right).
\end{equation}
Then, $\mathcal{O}_{\Delta, R} $ is a positive self-adjoint operator with a compact resolvent.
\end{lem}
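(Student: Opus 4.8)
The plan is to realize $\mathcal{O}_{\Delta,R}$ as the operator associated, in the sense of Kato's first representation theorem, with a symmetric, continuous and coercive sesquilinear form on the Hilbert space $\mathsf{H}$ introduced in \eqref{starsnorm}. I would set
\[
\mathfrak{a}\big((f,g),(\varphi,\psi)\big)=\int_{\Omega_1}\nabla f\cdot\nabla\overline{\varphi}\,dx+\int_{\Gamma_1}f\overline{\varphi}\,d\Gamma+a(g,\overline{\psi})+\int_{\Gamma_2}\left(\partial_{\nu_2}g\,\partial_{\nu_2}\overline{\psi}+g\overline{\psi}\right)d\Gamma
\]
for $(f,g),(\varphi,\psi)\in\mathsf{H}$. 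This form is Hermitian; it is continuous on $\mathsf{H}\times\mathsf{H}$ by the trace theorem together with the norm equivalences recorded just after \eqref{starsnorm}; and it is coercive since $\mathfrak{a}\big((f,g),(f,g)\big)\ge\|\nabla f\|_{L^2(\Omega_1)}^2+a(g,\overline{g})=\|(f,g)\|_{\mathsf{H}}^2$, the boundary integrals being nonnegative. The key computation is that, for every $(f,g)\in D(\mathcal{O}_{\Delta,R})$ and every $(\varphi,\psi)\in\mathsf{H}$, an integration by parts in $\Omega_1$ and the Green formula \eqref{GF} in $\Omega_2$, combined with the boundary conditions $\partial_{\nu_1}f+f=0$ on $\Gamma_1$, $\mathcal{B}_1g+\partial_{\nu_2}g=0$ and $\mathcal{B}_2g-g=0$ on $\Gamma_2$, $\mathcal{B}_1g=0$ and $\mathcal{B}_2g=\partial_{\nu_1}f$ on $I$, yield $\big(\mathcal{O}_{\Delta,R}(f,g),(\varphi,\psi)\big)_{L^2(\Omega_1)\times L^2(\Omega_2)}=\mathfrak{a}\big((f,g),(\varphi,\psi)\big)$; the two interface terms $-\int_I\partial_{\nu_1}f\,\overline{\varphi}\,d\Gamma$ and $\int_I\partial_{\nu_1}f\,\overline{\psi}\,d\Gamma$ coming from the two Green formulas cancel precisely because $\varphi=\psi$ on $I$.

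From this identity, symmetry and strict positivity of $\mathcal{O}_{\Delta,R}$ are immediate: for $(f,g),(\varphi,\psi)\in D(\mathcal{O}_{\Delta,R})$ one gets $\big(\mathcal{O}_{\Delta,R}(f,g),(\varphi,\psi)\big)=\mathfrak{a}((f,g),(\varphi,\psi))=\overline{\mathfrak{a}((\varphi,\psi),(f,g))}=\big((f,g),\mathcal{O}_{\Delta,R}(\varphi,\psi)\big)$, while $\big(\mathcal{O}_{\Delta,R}(f,g),(f,g)\big)=\mathfrak{a}((f,g),(f,g))\ge\|(f,g)\|_{\mathsf{H}}^2$. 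To upgrade symmetry to self-adjointness I would show $\mathcal{O}_{\Delta,R}$ is onto: given $(h_1,h_2)\in L^2(\Omega_1)\times L^2(\Omega_2)$, the antilinear map $(\varphi,\psi)\mapsto\int_{\Omega_1}h_1\overline{\varphi}\,dx+\int_{\Omega_2}h_2\overline{\psi}\,dx$ is continuous on $\mathsf{H}$, so Lax--Milgram furnishes a unique $(f,g)\in\mathsf{H}$ with $\mathfrak{a}((f,g),\cdot)$ equal to it; then exactly the sequence of test-function choices used in the proof of Proposition~\ref{p5-aplusbmdissip} (first $\varphi\in C^{\infty}_0(\Omega_1),\psi=0$, then $\varphi=0,\psi\in C^{\infty}_0(\Omega_2)$, giving $-\Delta f=h_1\in L^2(\Omega_1)$ and $\Delta^2g=h_2\in L^2(\Omega_2)$; then $\varphi\in H^1_{\ast,I}(\Omega_1),\psi=0$ and $\varphi=0,\psi\in H^2_{\ast,I}(\Omega_2)$, recovering the boundary conditions on $\Gamma_1$ and $\Gamma_2$ after using density of $H^{\frac{1}{2}}_{\ast,I}(\Gamma_1)$ in $L^2(\Gamma_1)$; then arbitrary $(\varphi,\psi)$, recovering the transmission conditions on $I$ after using density of $H^{\frac{3}{2}}_{\ast}(I)$ in $L^2(I)$) shows $(f,g)\in D(\mathcal{O}_{\Delta,R})$ with $\mathcal{O}_{\Delta,R}(f,g)=(h_1,h_2)$. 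A symmetric, injective and surjective operator has a bounded, everywhere-defined, symmetric—hence self-adjoint—inverse, so $\mathcal{O}_{\Delta,R}$ itself is self-adjoint, positive, and $0\in\rho(\mathcal{O}_{\Delta,R})$.

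For the compact resolvent it then suffices to show $\mathcal{O}_{\Delta,R}^{-1}$ is compact on $L^2(\Omega_1)\times L^2(\Omega_2)$. Lax--Milgram gives the a priori bound $\|(f,g)\|_{\mathsf{H}}\lesssim\|(h_1,h_2)\|_{L^2(\Omega_1)\times L^2(\Omega_2)}$, so $\mathcal{O}_{\Delta,R}^{-1}$ maps $L^2(\Omega_1)\times L^2(\Omega_2)$ boundedly into $\mathsf{H}\subset H^1(\Omega_1)\times H^2(\Omega_2)$; since $\Omega_1$ and $\Omega_2$ are bounded Lipschitz domains, the embeddings $H^1(\Omega_1)\hookrightarrow L^2(\Omega_1)$ and $H^2(\Omega_2)\hookrightarrow L^2(\Omega_2)$ are compact (Rellich), hence $\mathsf{H}\hookrightarrow L^2(\Omega_1)\times L^2(\Omega_2)$ is compact and $\mathcal{O}_{\Delta,R}^{-1}$ is compact as a composition of a bounded and a compact map. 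Consequently $\mathcal{O}_{\Delta,R}$ has compact resolvent. The main obstacle is the identification step: extracting the individual conditions on $\Gamma_1$, on $\Gamma_2$ and on $I$ from the single variational identity requires careful bookkeeping of the trace spaces and their density, exactly as in Proposition~\ref{p5-aplusbmdissip}; once that is in place, the rest is routine.
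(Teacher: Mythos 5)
Your proposal is correct and takes essentially the same route as the paper: both rest on the Green-formula identity $\big(\mathcal{O}_{\Delta,R}(f,g),(\varphi,\psi)\big)_{L^2(\Omega_1)\times L^2(\Omega_2)}=\mathfrak{a}\big((f,g),(\varphi,\psi)\big)$ with cancellation of the interface terms, symmetry and positivity read off from it, surjectivity via Lax--Milgram for the coercive form followed by recovery of the strong equations and boundary conditions, and compactness of the resolvent from the embedding of $\mathsf{H}$ into $L^2(\Omega_1)\times L^2(\Omega_2)$. The only immaterial differences are that you invert $\mathcal{O}_{\Delta,R}$ directly at $\lambda=0$ and deduce self-adjointness from the bounded symmetric inverse, whereas the paper solves $\left(\lambda\mathcal{I}+\mathcal{O}_{\Delta,R}\right)(f,g)=(F,G)$ for $\lambda>0$ and argues symmetric plus m-accretive, compressing the domain-identification step (which you spell out as in Proposition \ref{p5-aplusbmdissip}) into an appeal to classical elliptic regularity.
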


\begin{proof}
To prove that $\mathcal{O}_{\Delta, R} $ is a positive self-adjoint operator, we will show that it is a symmetric m-accretive operator. For this purpose, we will divide the proof into steps:\\\\
\textbf{Step 1.} ($\mathcal{O}_{\Delta, R}$  is symmetric.) Indeed, 
for all $(f,g), \ (h,k) \in D\left(\mathcal{O}_{\Delta,R} \right)$, we have
\begin{equation}
\begin{split}
&\left(\mathcal{O}_{\Delta,R} (f,g), (h,k)\right)_{L^{2}(\Omega_1) \times L^{2}(\Omega_2)}=- \int_{\Omega_1}(\Delta f) \overline{h} dx +  \int_{\Omega_2}(\Delta^2 g) \overline{k} dx\\\\
&\hspace{3cm} = \int_{\Omega_1} \nabla f \cdot \nabla \overline{h}dx + \int_{\Gamma_1} f \overline{h} d \Gamma + a(g,\overline{k})+  \int_{\Gamma_2} \left(\partial_{\nu_2} g \partial_{\nu_2} \overline{k} + g \overline{k} \right)d \Gamma
\\\\
&\hspace{3cm}  = \left((f,g), \mathcal{O}_{\Delta,R} (h,k)\right)_{L^{2}(\Omega_1) \times L^{2}(\Omega_2)}.
\end{split}
\end{equation}
Thus, $\mathcal{O}_{\Delta,R}$ is  symmetric.\\\\
\textbf{Step 2.} ($\mathcal{O}_{\Delta,R}$ is $m-$accretive.) Indeed, for all $(f,g) \in D(\mathcal{O}_{\Delta,R} )$, we have
\begin{equation}
\begin{split}
 & \Re\left(\mathcal{O}_{\Delta,R} (f,g), (f,g)\right)_{L^{2}(\Omega_1) \times L^{2}(\Omega_2)}=\Re \left\{- \int_{\Omega_1}(\Delta f) \overline{f} dx + \int_{\Omega_2}(\Delta^2 g) \overline{g} dx\right\}\\\\
&\hspace{3cm} = \int_{\Omega_1} |\nabla f |^2dx + \int_{\Gamma_1} |f|^2 d \Gamma + a(g,\overline{g})+  \int_{\Gamma_2} \left(|\partial_{\nu_2} g|^2  + |g|^2  \right)d \Gamma \geq 0.
\end{split}
\end{equation}
Thus, $\mathcal{O}_{\Delta,R}$ is an accretive operator. Now, let $(F,G) \in L^{2} (\Omega_1)\times L^{2}(\Omega_2)$ and $\lambda >0$, looking for $(f,g) \in D\left(\mathcal{O}_{\Delta,R} \right)$ solution of 

\begin{equation}
\left(\lambda \mathcal{I} + \mathcal{O}_{\Delta,R} \right)(f,g) =(F,G).
\end{equation}
Equivalently, we have the following system:
\begin{eqnarray}
 \lambda f -  \Delta f = F,\label{a_1}\\
 \lambda g+  \Delta^2 g = G.\label{a_2}
\end{eqnarray}
Taking $(\varphi, \psi) \in \mathsf{H} $, then integrating after multiplying \eqref{a_1} by $\overline{\varphi}$ and \eqref{a_2} by $\overline{\psi}$ yields the two equations added as follows
\begin{equation}\begin{split}
 \int_{\Omega_1} \nabla f \cdot \nabla \overline{\varphi} dx + a(g,\overline{\psi}) +  \int_{\Gamma_1}  f   \overline{\varphi}d\Gamma  + \int_{\Gamma_2} \left(\partial_{\nu_2} g \partial_{\nu_2} \overline{\psi} + g \overline{\psi} \right)d \Gamma+ \lambda \int_{\Omega_1} f  \overline{\varphi}dx + \lambda \int_{\Omega_2} g   \overline{\psi}dx \\=  \int_{\Omega_1} F  \overline{\varphi}dx +  \int_{\Omega_2} G   \overline{\psi}dx.\end{split}
\end{equation} 
Let 
\begin{equation}\begin{split}
S \left((f,g) , (\varphi, \psi)\right)=\int_{\Omega_1} \nabla f \cdot \nabla \overline{\varphi} dx + a(g,\overline{\psi}) +  \int_{\Gamma_1}  f   \overline{\varphi}d\Gamma  + \int_{\Gamma_2} \left(\partial_{\nu_2} g \partial_{\nu_2} \overline{\psi} + g \overline{\psi} \right)d \Gamma\\+ \lambda \int_{\Omega_1} f  \overline{\varphi}dx + \lambda \int_{\Omega_2} g   \overline{\psi}dx,\end{split}
\end{equation} 
and 
\begin{equation}
L(\varphi, \psi)=  \int_{\Omega_1} F  \overline{\varphi}dx +  \int_{\Omega_2} G   \overline{\psi}dx.
\end{equation} 
It is easy to see that  $S$ is a sesquilinear, continuous, and coercive form on the space $\mathsf{H}\times \mathsf{H}$, and $L$ is an antilinear and continuous form on $\mathsf{H}$. Then, it follows by Lax-Milgram's theorem that  $S((f,g) , (\varphi, \psi)) = L(\varphi, \psi)$ admits a unique solution $(f,g)\in \mathsf{H}$. By classical elliptic regularity, we deduce that $(f,g)\in D(\mathcal{O}_{\Delta,R} )$ solution of system \eqref{a_1}-\eqref{a_2}. Thus, $\mathcal{O}_{\Delta,R}$ is m-accretive.\\\\
\textbf{Step 3.} ($\mathcal{O}_{\Delta,R}$ has a compact resolvent.) Let
$$
R_{\lambda}(\mathcal{O}_{\Delta,R})=\left(\lambda \mathcal{I} + \mathcal{O}_{\Delta,R}\right)^{-1}.
$$
Due to Sobolev embeddings, $R_{0}(\mathcal{O}_{\Delta,R})$ is compact. Then, using the following resolvent identity
$$
R_{\lambda}- R_{\mu}=(\mu -\lambda)R_{\mu}R_{\lambda},
$$
we deduce that the resolvent $(\lambda \mathcal{I} + \mathcal{O}_{\Delta,R})^{-1}$ of the operator $\mathcal{O}_{\Delta,R}$ is compact, and the proof is thus complete.\\
\end{proof}
\\
\noindent {\textbf {Proof of Theorem \ref{nonexsthwaveplate}.}}   
According to Theorem \ref{Caractwaveplate} due to Huang \cite{huang1985characteristic} and 
 Prüss \cite{10.2307/1999112}, it is sufficient to show that the resolvent of $\mathcal{A}$ is not uniformly bounded on the imaginary axis. In other words, it is enough to show the existence of a positive real number $M$ and some sequences 
$\lambda_n \in i\R$, $U_n=(u_n,v_n,\eta_n,w_n,z_n,\xi_n,\zeta_n)^{\top}\in D(\mathcal{A})$, and $F_n=(f_{n}^1,g_{n}^1,h_n^1,f_{n}^2,g_{n}^2, h_{n}^2,h_n^3)^{\top}\in \mathcal{H}$, where $n\in \N$, such that

\begin{equation}\label{p5-4.1l}
	(\lambda_n \mathcal{I}-\mathcal{A})U_n=F_n, \quad \forall n\in \N,
	\end{equation}
	
\begin{equation}\label{p5-4.2l}
\|U_n\|_{\mathcal{H}}\geq M, \quad \forall n\in \N,
\end{equation}

\begin{equation}\label{p5-4.3l}
\lim_{n\to \infty}\|F_n\|_{\mathcal{H}}=0.
\end{equation}
From Lemma \ref{nonex1wp}, we can consider 
the sequence of  eigenfunctions $(\varphi_n,\psi_n)_{n\in \N}$ (that form an orthonormal basis of $L^2(\Omega_1)\times L^2(\Omega_2)$) of the operator $\mathcal{O}_{\Delta,R}$, corresponding to the eigenvalues $(\mu_n^2)_{n\in \N}$, such that $\mu_n^2$ tends to infinity as $n$ goes to  infinity. Consequently, for all $n\in \N$, they satisfy the following system 

 \begin{equation}\label{SS123}
\left\lbrace\begin{array}{ll}
\displaystyle -\Delta \varphi_n=\mu_n ^2 \varphi_n, \hspace{1cm} &\hbox{in}\quad \Omega_1,\\[0.1in]
\displaystyle \Delta^2 \psi_n=\mu_n ^2 \psi_n,\hspace{1.02cm} &\hbox{in}\quad \Omega_2, \\[0.1in]
\varphi_n-\psi_n =0, \ \ \mathcal{B}_1 \psi_n=0, \ \ \mathcal{B}_2 \psi_n=\partial_{\nu_1} \varphi_n &  \text{on} \quad I ,\\[0.1in]
\partial_{\nu_1} \varphi_n +\varphi_n=0, & \text{on} \quad \Gamma_1,\\ [0.1in]
\mathcal{B}_1 \psi_n+ \partial_{\nu_2} \psi_n =0, &  \text{on} \quad \Gamma_2,\\[0.1in]
\mathcal{B}_2 \psi_n - \psi_n =0, \hskip1.2cm \qquad &\hbox{on}\quad \Gamma_2,
\end{array}
\right.
\end{equation}
with

\begin{equation}
\norm{(\varphi_n,\psi_n)}_{L^2(\Omega_1)\times L^2(\Omega_2)}^2= \int_{\Omega_1} | \varphi_n|^2 dx + \int_{\Omega_2} | \psi_n|^2 dx =1.
\label{normwp}
\end{equation}
Now, let us choose 

\begin{equation}\label{choices}
\displaystyle  u_n =\frac{\varphi_n}{i\mu_n}, \ \ v_n =\varphi_n, \ \ \eta_n =\frac{1}{i\mu_n} \gamma_1 (\varphi_n), \ \ w_n =\frac{\psi_n}{i\mu_n}, \ \ z_n =\psi_n, \ \ \xi_n =\frac{1}{i\mu_n} \gamma_{2,2}(\psi_n), \ \ \zeta_n= \frac{1}{i\mu_n} \gamma_{2,1}(\psi_n).
\end{equation}
So, by setting \begin{equation} F_n=\left(0,0,\frac{1}{i\mu_n} \gamma_1 (\varphi_n),0,0,\frac{1}{i\mu_n} \gamma_{2,2}(\psi_n),\frac{1}{i\mu_n} \gamma_{2,1}(\psi_n)\right),\end{equation} we deduce that 

\begin{equation}
U_n=\left(u_n, v_n,\eta_n, w_n, z_n, \xi_n, \zeta_n\right),
\end{equation} 
is the solution in $D(\mathcal{A})$ of the following equation 
 
\begin{equation}
(i\mu_n \mathcal{I}-\mathcal{A}) U_n=F_n. 
\end{equation}
Now, multiplying equation $\eqref{SS123}_1$  and $\eqref{SS123}_2$ by $\overline{\varphi_n}$ and $\overline{\psi_n}$ respectively, integrating by parts, we get

\begin{equation}\begin{split}
\displaystyle  \int_{\Omega_1} |\nabla \varphi_n|^2 dx +  \int_{\Gamma_1} |\varphi_n|^2 d\Gamma +  a(\psi_n,\overline{\psi_n})+  \int_{\Gamma_2} \left(|\partial_{\nu_2} \psi_n|^2  + |\psi_n|^2  \right)d \Gamma  \\ =
\mu_n^{2}\int_{\Omega_1}| \varphi_n|^2 dx+\mu_n^{2} \int_{\Omega_2}| \psi_n|^2 dx=\mu_n^{2}.\end{split}
\end{equation}
Since the norm defined on the left-hand side of the above equation is equivalent to the usual norm of $H^1(\Omega_1) \times H^2(\Omega_2)$ on $H^1(\Omega_1) \times H^2(\Omega_2),$ we get
\begin{equation}\label{normH12}
\displaystyle \norm{\left(\varphi_n,\psi_n\right)}_{H^1(\Omega_1) \times H^2(\Omega_2)}^2 \lesssim \mu_n^2.
\end{equation}
On the other hand, we have  
\begin{equation}
\begin{split}
\|U_n\|_{\mathcal{H}}^2&=\int_{\Omega_1}| \varphi_n|^2 dx+\int_{\Omega_2}| \psi_n|^2 dx \\
&\quad + \mu_n^{-2}  \int_{\Omega_1} |\nabla \varphi_n|^2 dx+ \mu_n^{-2} \int_{\Gamma_1} |\varphi_n|^2 d \Gamma +  \mu_n^{-2} a(\psi_n,\overline{\psi_n})+  \mu_n^{-2} \int_{\Gamma_2} \left(|\partial_{\nu_2} \psi_n|^2  + |\psi_n|^2  \right)d \Gamma  \geq 1,
\label{normboundedwp}
\end{split}
\end{equation} which implies that \eqref{p5-4.2l} holds for $M=1.$ By using the trace theorem of interpolation type (see Theorem 1.4.4 in \cite{liu1999semigroups} and Theorem 1.5.1.10 in \cite{grisvard1985elliptic}), we obtain

\begin{equation}\begin{split}\label{fnonexpwp}
\norm{F_n}^2_\mathcal{H}&=\displaystyle \mu_n^{-2} \norm{\varphi_n}^2_{L^2(\Gamma_1)}+\mu_n^{-2} \norm{\partial_{\nu_2} \psi_n}^2_{L^2(\Gamma_2)}+\mu_n^{-2} \norm{\psi_n}^2_{L^2(\Gamma_2)}
 \\&\lesssim\mu_n^{-2} \norm{\varphi_n}_{H^1(\Omega_1)}\norm{\varphi_n}_{L^2(\Omega_1)}+\mu_n^{-2} \norm{\psi_n}_{H^2(\Omega_2)}\norm{\psi_n}_{H^1(\Omega_2)}+\mu_n^{-2} \norm{\psi_n}_{H^1(\Omega_2)}\norm{\psi_n}_{L^2(\Omega_2)}.\end{split}
\end{equation}
Further, by using Theorem 4.17 in \cite{adams1975sobolev}, equation \eqref{normwp} and equation  \eqref{normH12}, we obtain from the above inequality that
\begin{equation}\begin{split}
\norm{F_n}^2_\HH &\lesssim \displaystyle \mu_n^{-2} \norm{\varphi_n}_{H^1(\Omega_1)}\norm{\varphi_n}_{L^2(\Omega_1)}+\mu_n^{-2} \norm{\psi_n}_{H^2(\Omega_2)}^{\frac{3}{2}}\norm{\psi_n}_{L^2(\Omega_2)}^{\frac{1}{2}}+\mu_n^{-2} \norm{\psi_n}_{H^2(\Omega_2)}^{\frac{1}{2}}\norm{\psi_n}_{L^2(\Omega_2)}^{\frac{3}{2}} \\  &\lesssim \mu_n^{-1} + \mu_n^{-\frac{1}{2}}+ \mu_n^{-\frac{3}{2}} \to 0 
 \quad \text{as} \quad n \to \infty.\label{012}\end{split}
\end{equation}
Then, the resolvent of the operator $\mathcal{A}$ is not uniformly bounded on the imaginary axis, and consequently, our system is not uniformly (exponentially) stable.  
The proof is thus complete.

\section{Polynomial Stability}\label{polynomialstatrans}
In this section, we will study the polynomial decay of the system \eqref{p5-sys2}-\eqref{p5-diff} for smooth solutions by a multiplier method, since the system \eqref{p5-sys2}-\eqref{p5-diff} is not uniformly stable. One of the main ingredients is to use the frequency domain approach,  specifically the key ingredient for the proof of polynomial stability is Theorem 2.4 of \cite{borichev2010optimal} (see also \cite{batty2008non} and \cite{liu2005characterization}), which we will partially recall:

\begin{Thm}(Borichev-Tomilov (see \cite{borichev2010optimal}))\label{BoriTomi}
   Let $\left( e^{t A} \right)_{t \geq 0}$ be a bounded $C_{0}$-semigroup of contractions on a Hilbert space $H$ generated by $A$ such that $i\mathbb{R}\subset\rho(A)$. Then, for a fixed $\ell>0,$ the following conditions are equivalent:
\begin{itemize}
\item[(i)]:  $\sup\limits_{\beta \in \R}\dfrac{1}{|\beta|^{\ell}}\left\| (i\beta \mathcal{I}-A)^{-1} \right\|_{\mathcal{L}(H)}<+\infty,$
\item[(ii)]: $\text{there exists a constant } C>0~\text{such that for all }U_{0}\in D(A)~\text{we have}\nonumber\\
\|e^{t A} U_{0}\|^2_{H}\leq \dfrac{C}{t^{\frac{2}{\ell}}}\|U_{0}\|^2_{D(A)}, \quad \forall t>0.$
\end{itemize}
\end{Thm}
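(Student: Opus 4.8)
The statement is the Borichev--Tomilov characterization, so the plan is to prove the two implications $(ii)\Rightarrow(i)$ and $(i)\Rightarrow(ii)$ separately, keeping in mind the two standing hypotheses: $\left(e^{tA}\right)_{t\ge0}$ is bounded and $i\R\subset\rho(A)$, so in particular $0\in\rho(A)$ and $A^{-1}\in\mathcal{L}(H)$. I would first record the equivalent operator formulation of $(ii)$: since $e^{tA}$ commutes with $A^{-1}$, writing $U_0=A^{-1}x$ shows that $(ii)$ is equivalent to the bound $\norm{e^{tA}A^{-1}}_{\mathcal{L}(H)}\lesssim t^{-1/\ell}$ for $t\ge1$ (for $t\le1$ it is bounded by $\norm{A^{-1}}\sup_t\norm{e^{tA}}$). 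Throughout I would work with this operator form and write $R(\lambda)=(\lambda\mathcal{I}-A)^{-1}$.

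For the implication $(ii)\Rightarrow(i)$ --- the direction that does not use the Hilbert structure --- the plan is to exploit the Laplace representation of the resolvent of a fractional power of $A$. The semigroup law gives $\norm{e^{tA}A^{-\theta}}=\norm{\bigl(e^{(t/\theta)A}A^{-1}\bigr)^{\theta}}\lesssim t^{-\theta/\ell}$, so choosing $\theta>\ell$ makes $t\mapsto\norm{e^{tA}A^{-\theta}}$ integrable on $(0,\infty)$. Then $R(i\beta)A^{-\theta}=\int_0^\infty e^{-i\beta t}e^{tA}A^{-\theta}\,dt$ converges and yields $\sup_{\beta}\norm{R(i\beta)A^{-\theta}}<\infty$. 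A moment/interpolation inequality in the spectral parameter --- equivalently, the algebraic identity $R(i\beta)=-\sum_{j=1}^{m}(i\beta)^{j-1}A^{-j}+(i\beta)^{m}A^{-m}R(i\beta)$ combined with interpolation to remove the non-sharp integer power $m$ --- then upgrades this uniform bound to $\norm{R(i\beta)}\lesssim\abs{\beta}^{\ell}$ for large $\abs{\beta}$, which is $(i)$. I regard this direction as routine.

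The implication $(i)\Rightarrow(ii)$ is the heart of the theorem and the step where the Hilbert-space structure is essential. Starting from $\norm{R(i\beta)}\lesssim\langle\beta\rangle^{\ell}$, a Neumann series based at points of $i\R$ extends $R$ holomorphically to the parabolic neighbourhood $\Omega_\epsilon=\{\sigma+i\beta:\ \sigma>-\epsilon\langle\beta\rangle^{-\ell}\}$ of the imaginary axis, with the same growth $\norm{R(\lambda)}\lesssim\langle\beta\rangle^{\ell}$ there. On a dense set of data (say $D(A^2)$, the extra smoothing providing decay in $\beta$), I would then use the Hilbert-space Plancherel identity --- applied to $t\mapsto e^{-\sigma t}e^{tA}A^{-1}x$, whose Laplace transform is $R(\sigma+i\beta)A^{-1}x$ --- to pass from the resolvent bound on (a shift of) the imaginary axis to an $L^2$-in-time estimate for the orbit $e^{tA}A^{-1}x$. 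Converting this $L^2$ control into the pointwise decay $\norm{e^{tA}A^{-1}x}\lesssim t^{-1/\ell}\norm{x}$ would proceed by a frequency decomposition: low frequencies $\langle\beta\rangle\lesssim t^{1/\ell}$ are handled through the resolvent growth and the factor $e^{\sigma t}$ (which is $\approx1$ on $\partial\Omega_\epsilon$ there), while high frequencies $\langle\beta\rangle\gtrsim t^{1/\ell}$ are absorbed by the extra smoothing and the boundedness of the semigroup; the crossover $\langle\beta\rangle\sim t^{1/\ell}$ produces exactly the exponent $1/\ell$. A density argument together with the uniform boundedness of $e^{tA}$ then extends the bound to all of $H$, giving $(ii)$.

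The main obstacle is precisely obtaining the \emph{sharp} exponent $1/\ell$ in $(i)\Rightarrow(ii)$: a naive contour shift along $\partial\Omega_\epsilon$, estimating $\int e^{-\epsilon t\langle\beta\rangle^{-\ell}}\langle\beta\rangle^{\ell}\,d\beta$ crudely, loses a logarithmic factor and gives only $t^{-1/\ell}\log t$, which is what happens in a general Banach space. The role of the Hilbert-space Plancherel identity is exactly to replace this crude estimate by a lossless $L^2$ matching of norms, and the delicate points are to justify rigorously the holomorphic representation of the orbit, to exchange the limit $\sigma\downarrow0$ with the integral, and to optimize the frequency split; these are where I would expect to spend most of the effort.
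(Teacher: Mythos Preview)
The paper does not prove this theorem: it is quoted from \cite{borichev2010optimal} (see also \cite{batty2008non}, \cite{liu2005characterization}) and used as a black box to derive the polynomial decay in Theorem~\ref{PolynomialStability}. So there is nothing in the paper to compare your argument against.

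That said, your sketch is a faithful outline of the actual Borichev--Tomilov proof. You correctly isolate the easy direction $(ii)\Rightarrow(i)$ via the Laplace representation of the resolvent, and you correctly identify that $(i)\Rightarrow(ii)$ hinges on the Hilbert-space Plancherel identity to suppress the logarithmic loss that a crude contour estimate in $\Omega_\epsilon$ would produce. The one place I would tighten is your passage from the $L^2$-in-time bound to the pointwise decay $\norm{e^{tA}A^{-1}}\lesssim t^{-1/\ell}$: the ``frequency decomposition'' you describe is the right heuristic, but in practice this step is usually carried out by combining the $L^2$ bound with the semigroup property (writing $e^{tA}A^{-1}=e^{(t-s)A}\cdot e^{sA}A^{-1}$ and averaging in $s$ over an interval of length $\sim t$), rather than by literally splitting the Fourier integral; as written your high/low split is a bit imprecise about which norm the ``extra smoothing'' buys decay in. None of this is a genuine gap --- the ingredients you list are the right ones --- but since the paper simply cites the result, the appropriate move here is to do the same.
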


To proceed with our polynomial energy decay result, we require the following additional geometric assumption on the wave and the plate, which is  due to the requirements of integration in $\Omega_1$ and $\Omega_2$:
\begin{assump}\label{geometriccon}
{\rm Assume that there exists a fixed point $x_0 \in \R^2$ such that, putting $m(x)=x-x_0$, we have 
\begin{equation}\label{intergeo}
(m \cdot \nu_1)=0, \ \ \ \forall x \ \in \ I,
\end{equation}
and
\begin{equation}\label{extgeo}
(m \cdot \nu ) \geq \delta, \ \ \ \forall x \ \in \ \partial \Omega,
\end{equation}
where $\delta$ is a positive real number and $( \ \cdot \ )$ designates the scalar product in $\R^2.$}
\end{assump}
\begin{rk}
 We remark that if condition \eqref{intergeo} is satisfied, then the interface $I$ is straight. Hence, from now on, we assume that the interface is straight and the interior angles $\omega_{i,1}$ and $\omega_{i,N_i}$, for $i=1,2,$ are the angles at the extremities of $I.$
	\end{rk}

The following figures provide some examples of geometries that satisfy the previously mentioned assumptions:
\begin{figure}[hbt!] \begin{subfigure}[b]{0.35\textwidth} \centering \resizebox{\linewidth}{!}{ \begin{tikzpicture}
    \draw[black,-](0,0)--(0,6);
	\draw[blue] (0,0)  -- (-1.5,0) -- (-1.5,6) -- (0,6);
	\draw[red] (0,0) arc (270:450:1.5cm and 3cm);
	\filldraw[black] (0,1.5) circle (2pt) node[right]{$x_0$};
	\filldraw[black] (0,3.5) circle (2pt) node[anchor=west]{};
	\draw[->, ultra thick, black](0,3.5) -- (0,5.5) node[right]{$m$};
	\draw[->, ultra thick, black](0,3.5) -- (1,3.5) node[right]{$\nu_1$};
	\filldraw[red] (1.3,4.5) circle (2pt) node[anchor=west]{};
	\draw[->, ultra thick, red](1.3,4.5) -- (2.09,6.33511) node[right]{$m$};
	\draw[->, ultra thick, red](1.3,4.5) -- (2.25775,4.787327009) node[right]{$\nu_2$};
	\filldraw[blue] (-1.5,4) circle (2pt) node[anchor=west]{};
\draw[->, ultra thick, blue](-1.5,4) -- (-2.4538116355,5.555918769) node[left]{$m$};
\draw[->, ultra thick, blue](-1.5,4) -- (-2.5,4) node[left]{$\nu_1$};
	\node[red,above] at (0.75,3){\scalebox{1}{Plate}};
	\node[red,above] at (0.75,2.5){\scalebox{1}{$\Omega_2$}}; 
		\node[blue,above] at (-0.75,3){\scalebox{1}{ \text{Wave}}}; 
		\node[blue,above] at (-0.75,2.5){\scalebox{1}{ $\Omega_1$}}; 
		\node[black,above] at (-0.25,2){\scalebox{1}{$I$}}; 
				\node[red,right] at (1.5,3){\scalebox{1.25}{$\Gamma_2$}};
					\node[blue,left] at (-1.5,3){\scalebox{1.25}{$\Gamma_1$}};
					\node[blue,above] at (-0.75,6){\scalebox{1.25}{$\Gamma_1$}};
					\node[blue,below] at (-0.75,0){\scalebox{1.25}{$\Gamma_1$}};
	 \end{tikzpicture}} \caption{} \label{fig:subfig8} \end{subfigure} \begin{subfigure}[b]{0.35\textwidth} \centering \resizebox{\linewidth}{!}{ \begin{tikzpicture} 
    \draw[blue] (0.5,2.87) arc (60:300:3);
    \draw[red] (0.5,-2.33) arc (-60:60:3);
    \draw[black,-](0.5,-2.33)--(0.5,2.87);
    \filldraw[black] (0.5,-0.8) circle (2pt) node[right]{$x_0$};
\node[red,above] at (1.3,0){\scalebox{1.4}{Plate}};
	\node[red,above] at (1.3,-0.9){\scalebox{1.4}{$\Omega_2$}}; 
		\node[blue,above] at (-1.5,0){\scalebox{1.5}{ \text{Wave}}}; 
		\node[blue,above] at (-1.5,-0.9){\scalebox{1.5}{ $\Omega_1$}};
		\node[black,above] at (0.2,-0.3){\scalebox{1.5}{$I$}}; 
				\node[red,right] at (2,0){\scalebox{1.25}{$\Gamma_2$}};
					\node[blue,left] at (-4,0){\scalebox{1.25}{$\Gamma_1$}};

    \end{tikzpicture}} \caption{} \label{fig:subfig9} \end{subfigure}\\
\begin{subfigure}[b]{0.35\textwidth} \centering \resizebox{\linewidth}{!}{ \begin{tikzpicture} 
    \draw[blue] (0,1) arc (80:280:2);
    \draw[black,-](0,1)--(0,-2.95);
    \draw[red] (0,1)--(3,-1);
    \draw[red] (0,-2.95)--(3,-1);
    \filldraw[black] (0,-2) circle (2pt) node[right]{$x_0$};
    \node[red,above] at (1,-1){\scalebox{1}{Plate}};
	\node[red,above] at (1,-1.5){\scalebox{1}{$\Omega_2$}}; 
		\node[blue,above] at (-1,-1){\scalebox{1}{ \text{Wave}}}; 
		\node[blue,above] at (-1,-1.5){\scalebox{1}{ $\Omega_1$}};
		\node[black,above] at (-0.15,-1.5){\scalebox{1}{$I$}}; 
				\node[red,right] at (1,0.5){\scalebox{1.25}{$\Gamma_2$}};
				\node[red,right] at (1.1,-2.5){\scalebox{1.25}{$\Gamma_2$}};
					\node[blue,left] at (-2.3,-1){\scalebox{1.25}{$\Gamma_1$}};

    \end{tikzpicture} } \caption{} \label{fig:subfig10} \end{subfigure}
\begin{subfigure}[b]{0.35\textwidth} \centering \resizebox{\linewidth}{!}{ \begin{tikzpicture}
    \draw[blue] (0,0) arc (0:180:3cm and 1.5cm);
    \draw[red] (-6,0) arc (180:360:3cm and 1.5cm);
    \draw[black,-](-6,0)--(0,0);
    \filldraw[black] (-1.5,0) circle (2pt) node[below]{$x_0$};
    \node[black,below] at (-2.5,0){\scalebox{1}{$I$}};
    \node[blue,above] at (-3.5,0.5){\scalebox{1}{ \text{Wave}}}; 
		\node[blue,above] at (-2.5,0.45){\scalebox{1}{ $\Omega_1$}}; 
		\node[red,above] at (-3.5,-0.95){\scalebox{1}{Plate}};
	\node[red,above] at (-2.5,-1){\scalebox{1}{$\Omega_2$}}; 
		\node[blue,right] at (-1.5,1.5){\scalebox{1.25}{$\Gamma_1$}};
					\node[red,left] at (-1,-1.7){\scalebox{1.25}{$\Gamma_2$}}; 
\end{tikzpicture}
 } \caption{} \label{fig:subfig10} \end{subfigure} \caption{Transmission wave-plate models satisfying Assumptions  \ref{anglesomega1}, \ref{anglesomega2} and \ref{geometriccon}}
\label{fig:x cubed graph} \end{figure}
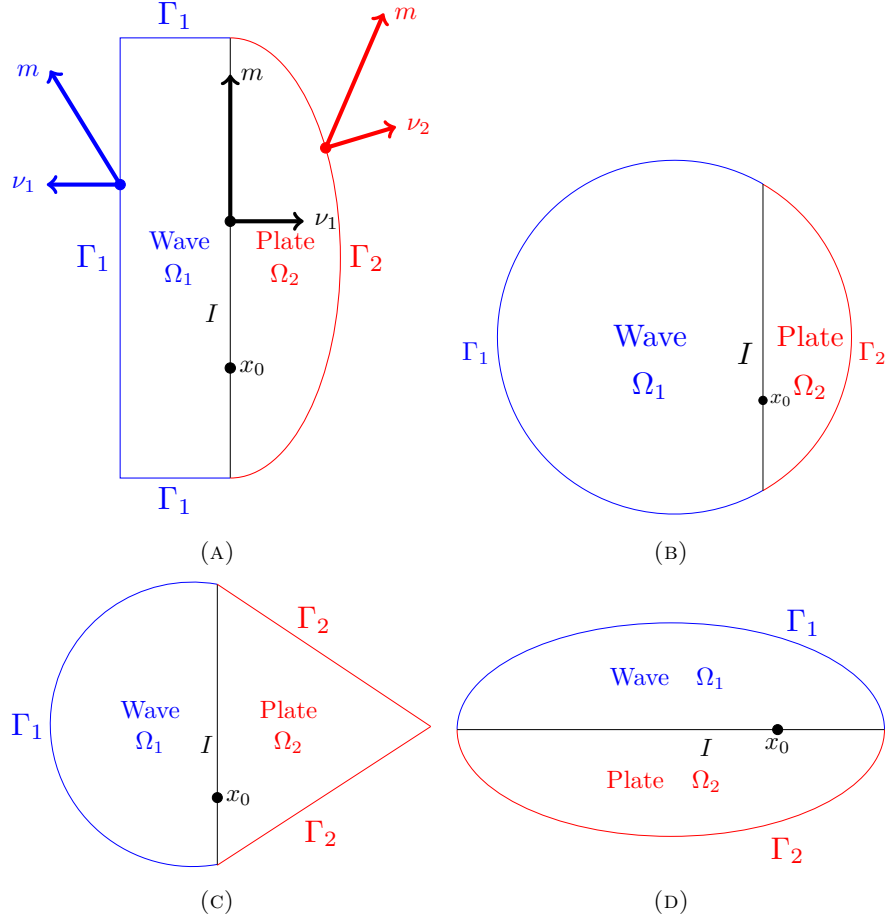

Now, we can state our energy decay rate result.
\begin{Thm}\label{PolynomialStability}
Assume that $\Omega_1$ and $\Omega_2$  are polygonal domains as described above. Also, assume that Assumptions \ref{anglesomega1}, \ref{anglesomega2} and \ref{geometriccon} hold. Then, there exists a constant $\mathcal{C}>0$ such that for all initial data
	$U_0=(u_0,u_1,\eta_0,w_0,w_1,\xi_0,\zeta_0) \in D(\AA),$ the energy of system \eqref{p5-sys2}-\eqref{p5-diff} satisfies the following estimate:
	\begin{equation}\label{ep}
	E(t)\leq \dfrac{\mathcal{C}}{t} \norm{U_0}^2_{D(\mathcal{A})}, \ \   \forall t>0. 
	\end{equation} 
\end{Thm}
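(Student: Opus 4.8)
The plan is to apply the Borichev--Tomilov criterion (Theorem \ref{BoriTomi}) with $\ell=2$. Since $E(t)=\tfrac12\|e^{t\AA}U_0\|_{\HH}^2$, the estimate \eqref{ep} is equivalent to $\|e^{t\AA}U_0\|_{\HH}^2\lesssim t^{-1}\|U_0\|_{D(\AA)}^2$, i.e. to condition (ii) of Theorem \ref{BoriTomi} with $2/\ell=1$. As $i\R\subset\rho(\AA)$ is already provided by (the proof of) Theorem \ref{p5-strong}, it remains to establish the resolvent bound
\[
\limsup_{|\beta|\to\infty}\ \frac{1}{\beta^{2}}\,\big\|(i\beta\,\mathcal I-\AA)^{-1}\big\|_{\mathcal L(\HH)}<\infty .
\]
I would argue by contradiction. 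If this fails, there are $\beta_n\in\R$ with $|\beta_n|\to\infty$, $U_n=(u_n,v_n,\eta_n,w_n,z_n,\xi_n,\zeta_n)\in D(\AA)$ with $\|U_n\|_{\HH}=1$, and $F_n=(f_n^1,g_n^1,h_n^1,f_n^2,g_n^2,h_n^2,h_n^3)\to 0$ in $\HH$ such that $(i\beta_n\mathcal I-\AA)U_n=\beta_n^{-2}F_n$. Writing this componentwise as in Lemma \ref{p5-sur} and eliminating $v_n,z_n,\eta_n,\xi_n,\zeta_n$, one obtains $\Delta u_n+\beta_n^2 u_n=\tilde f_n$ in $\Omega_1$ and $\Delta^2 w_n-\beta_n^2 w_n=\tilde g_n$ in $\Omega_2$, with $\|\tilde f_n\|_{L^2(\Omega_1)}+\|\tilde g_n\|_{L^2(\Omega_2)}=o(\beta_n^{-1})$, supplemented by the boundary and transmission relations inherited from $D(\AA)$ and \eqref{p5-surf3k}--\eqref{p5-surf7k}. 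Since $v_n=i\beta_n u_n+o(\beta_n^{-2})$ and $z_n=i\beta_n w_n+o(\beta_n^{-2})$ in $L^2$, proving $\|U_n\|_{\HH}\to 0$ — the sought contradiction — reduces to showing
\[
\mathcal E_n:=\|\nabla u_n\|_{L^2(\Omega_1)}^2+\beta_n^2\|u_n\|_{L^2(\Omega_1)}^2+a(w_n,\overline{w_n})+\beta_n^2\|w_n\|_{L^2(\Omega_2)}^2\longrightarrow 0 .
\]

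The first stage exploits the dissipation. Taking the real part of $\big((i\beta_n\mathcal I-\AA)U_n,U_n\big)_{\HH}$ and using \eqref{p5-Adissipative} gives $\|\eta_n\|_{L^2(\Gamma_1)}^2+\|\xi_n\|_{L^2(\Gamma_2)}^2+\|\zeta_n\|_{L^2(\Gamma_2)}^2=o(\beta_n^{-2})$. Substituting this into the boundary equations \eqref{p5-surf3k}, \eqref{p5-surf6k}, \eqref{p5-surf7k} and into the defining relations of $D(\AA)$ then forces the exterior traces $u_n|_{\Gamma_1}$, $v_n|_{\Gamma_1}$, $\partial_{\nu_1}u_n|_{\Gamma_1}$, together with $w_n|_{\Gamma_2}$, $\partial_{\nu_2}w_n|_{\Gamma_2}$, $z_n|_{\Gamma_2}$, $\partial_{\nu_2}z_n|_{\Gamma_2}$, $\mathcal B_1 w_n|_{\Gamma_2}$ and $\mathcal B_2 w_n|_{\Gamma_2}$, all to tend to $0$ in $L^2$ with explicit powers of $\beta_n$.

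The core of the argument is a Rellich--Pohozaev multiplier analysis based on the field $m(x)=x-x_0$ furnished by Assumption \ref{geometriccon}. I would multiply the wave equation $\Delta u_n+\beta_n^2 u_n=\tilde f_n$ by $\overline{m\cdot\nabla u_n}$ and integrate over $\Omega_1$, multiply the plate equation $\Delta^2 w_n-\beta_n^2 w_n=\tilde g_n$ by $\overline{m\cdot\nabla w_n}$ and integrate over $\Omega_2$, and combine these with the lower-order multipliers $\overline{u_n}$ and $\overline{w_n}$ to absorb the $L^2$-level terms; because $u_n$ lies only in $H^{3/2-\varepsilon}(\Omega_1)$ and $w_n$ only in $H^2(\Omega_2)$, each identity must be justified through the regularity results of Section \ref{regularitytrans} (the $H^4(\Omega_2)$-approximating sequence of Proposition \ref{p5-propregsol} and the Green's formulas assembled there). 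On the exterior boundary the condition $(m\cdot\nu)\ge\delta>0$ gives the dominant boundary integrals a definite sign, and they are dominated by the small traces obtained above. On the interface $I$ the condition $(m\cdot\nu_1)=0$ (flat interface) annihilates the $|\nabla u_n|^2$- and $\beta_n^2|u_n|^2$-type boundary terms, while the transmission conditions $u_n=w_n$, $\mathcal B_1 w_n=0$, $\mathcal B_2 w_n=\partial_{\nu_1}u_n$ on $I$ make the residual interface contribution of the wave identity cancel against that of the plate identity. Adding all identities and tracking the powers of $\beta_n$ carefully yields $\mathcal E_n\to 0$, hence $\|U_n\|_{\HH}\to 0$, contradicting $\|U_n\|_{\HH}=1$; Theorem \ref{BoriTomi} then delivers \eqref{ep}.

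The main obstacle is the treatment of the interface integrals: the wave is second order and the plate fourth order, so the two multiplier identities produce interface terms of genuinely different types — on one side $\partial_{\nu_1}u_n$ paired with $m\cdot\nabla u_n$, on the other $\mathcal B_1 w_n$, $\mathcal B_2 w_n$, $\partial_{\nu_2}w_n$ and second-order traces of $w_n$ — and making them cancel requires simultaneously the flatness $(m\cdot\nu_1)=0$ and a careful use of the transmission conditions, all carried out under the weak regularity $H^{3/2-\varepsilon}$ of $u_n$ near the corners of the polygonal domains (which also explains why the computations must be run on the approximating families rather than directly). A secondary difficulty is the $\beta_n$-bookkeeping near the exterior boundary: the tangential trace of $\nabla u_n$ on $\Gamma_1$ is not controlled by the dissipation alone, so the terms $\Re\int_{\Gamma_1}\partial_{\nu_1}u_n\,\overline{m\cdot\nabla u_n}$ must be handled by splitting into normal and tangential parts and, if needed, by an additional localized estimate; checking that after all cancellations the right-hand sides remain $o(1)$ is precisely what pins the decay rate at $1/t$ and no faster.
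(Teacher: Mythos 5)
Your proposal is correct and follows essentially the same route as the paper: Borichev--Tomilov with $\ell=2$, a contradiction sequence $\lambda_n^2(i\lambda_n\mathcal I-\AA)U_n=F_n\to0$, dissipation estimates for the boundary traces, Rellich-type multiplier identities with $m\cdot\nabla$ for the wave and the plate (justified on approximating sequences via the regularity results of Section \ref{regularitytrans}), interface cancellation from $(m\cdot\nu_1)=0$ together with the transmission conditions, and the lower-order multipliers $\overline{u_n},\overline{w_n}$ to recover $\|\nabla u_n\|^2$ and $a(w_n,\overline{w_n})$. The technical points you flag (absorbing the tangential trace on $\Gamma_1$ via Young's inequality into the sign-definite $(m\cdot\nu_1)|\nabla y|^2$ term, and running the identities on $H^2$/$H^4$ approximants) are precisely how the paper's Lemmas \ref{lemuinequality}--\ref{lemu&winequality} proceed.
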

	
	Before stating the proof of the above theorem, we shall preliminarily present the following two technical lemmas to be used in the proof of our polynomial-type decay estimate:
\begin{lem}\label{lemuinequality}
		Assume that $\Omega_1$ is a polygonal domain as described above. Assume also that Assumptions \ref{anglesomega1} and \ref{geometriccon} hold. Then, there exists $\varepsilon \in (0,\frac{1}{2})$ such that the solution $y \in H^1(\Omega_1)$ of system \eqref{p5-y1} satisfies the following inequality
		\begin{equation}\label{uinequality}
		-\Re\left\{\int_{\Omega_1} \Delta y (m \cdot \nabla \overline{y}) dx\right\} \geq -\frac{R_1^2}{\delta} \int_{\Gamma_1} |\partial_{\nu_1} y|^2 d\Gamma - \Re\left\{\left\langle \partial_{\nu_1} y,m \cdot \nabla \overline{y}\right\rangle_{H^{-\varepsilon}(I) \times H^{\varepsilon}(I)}\right\}, 
		\end{equation}
		where $R_1=\max\limits_{x \in \Gamma_1}|m(x)|.$
\end{lem}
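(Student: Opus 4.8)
The plan is to apply the Rellich--Pohozaev multiplier identity to the first equation of \eqref{p5-y1}, with $m\cdot\nabla\overline{y}$ as multiplier. For a function $y$ regular enough on $\Omega_1$ with $\Delta y\in L^2(\Omega_1)$, two integrations by parts give
\begin{equation*}
\Re\int_{\Omega_1}\Delta y\,(m\cdot\nabla\overline{y})\,dx=\Re\int_{\partial\Omega_1}(\partial_{\nu_1}y)(m\cdot\nabla\overline{y})\,d\Gamma-\frac12\int_{\partial\Omega_1}(m\cdot\nu_1)\,|\nabla y|^2\,d\Gamma ,
\end{equation*}
where the usual interior term is absent precisely because $\Omega_1\subset\R^2$. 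Since $\partial\Omega_1=\overline{\Gamma_1}\cup\overline{I}$, it then remains to estimate the contributions from $\Gamma_1$ and from $I$ separately.

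On $I$, Assumption \ref{geometriccon} gives $m\cdot\nu_1=0$, so the $|\nabla y|^2$ boundary term over $I$ vanishes, and moreover $m\cdot\nabla\overline{y}=(m\cdot\tau_1)\,\partial_{\tau_1}\overline{y}$ on $I$, i.e. a tangential derivative of the Dirichlet datum $v_2\in H^{3/2}(I)$; hence $m\cdot\nabla\overline{y}\in H^{1/2}(I)\subset H^{\varepsilon}(I)$, while $\partial_{\nu_1}y\in H^{-\varepsilon}(I)$ since $y\in H^{3/2-\varepsilon}(\Omega_1)$ by Lemma \ref{p5-lemregu}. This turns the $I$-contribution into exactly the duality pairing $-\Re\langle\partial_{\nu_1}y,m\cdot\nabla\overline{y}\rangle_{H^{-\varepsilon}(I)\times H^{\varepsilon}(I)}$ of \eqref{uinequality}. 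On $\Gamma_1$, I would decompose $\nabla y=(\partial_{\nu_1}y)\nu_1+(\partial_{\tau_1}y)\tau_1$ and set $a:=m\cdot\nu_1\ge\delta$, $b:=m\cdot\tau_1$, so that $a^2+b^2=|m|^2\le R_1^2$. The $\Gamma_1$-integrand of $-\Re\int_{\Omega_1}\Delta y\,(m\cdot\nabla\overline{y})\,dx$ then equals
\begin{equation*}
-\frac{a}{2}\,|\partial_{\nu_1}y|^2-b\,\Re\!\left[(\partial_{\nu_1}y)\,\partial_{\tau_1}\overline{y}\right]+\frac{a}{2}\,|\partial_{\tau_1}y|^2 .
\end{equation*}
Applying Young's inequality to the cross term in the form $|b|\,|\partial_{\nu_1}y|\,|\partial_{\tau_1}y|\le\tfrac{b^2}{2a}|\partial_{\nu_1}y|^2+\tfrac{a}{2}|\partial_{\tau_1}y|^2$ makes the two $|\partial_{\tau_1}y|^2$ terms cancel exactly, leaving a lower bound $-\tfrac{a^2+b^2}{2a}\,|\partial_{\nu_1}y|^2\ge-\tfrac{R_1^2}{\delta}\,|\partial_{\nu_1}y|^2$. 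Integrating over $\Gamma_1$ and adding the $I$-term yields \eqref{uinequality}.

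The genuinely delicate point is that, by Lemma \ref{p5-lemregu}, $y$ only belongs to $H^{3/2-\varepsilon}(\Omega_1)$, so the Rellich identity above (which a priori needs $\nabla y\in H^1(\Omega_1)$ and $|\nabla y|^2\in L^1(\partial\Omega_1)$) and the pointwise splitting of the $\Gamma_1$-integrand are not directly justified: on $\Gamma_1$ the trace of $\partial_{\tau_1}y$ is a priori only in $H^{-\varepsilon}(\Gamma_1)$. To make the argument rigorous I would proceed by approximation, in the spirit of the proof of Lemma \ref{p5-lemregw} and following Lemma 3.1 of \cite{rao1993stabilization} and Lemma 3.2 of \cite{ammari2010stabilization}: approximate the data $(\Delta y,v_1,v_2)$ of \eqref{p5-y1} by smooth data, solve the associated boundary value problems, and use the elliptic regularity theory on polygonal domains under Assumption \ref{anglesomega1} (splitting off, where needed, the explicit corner-singularity functions attached to the vertices of $\Omega_1$) to obtain a sequence $y_k$ for which the identity and the $\Gamma_1$-estimate hold classically, with $y_k\to y$ in $H^{3/2-\varepsilon}(\Omega_1)$, $\Delta y_k\to\Delta y$ in $L^2(\Omega_1)$, $\partial_{\nu_1}y_k\to v_1$ in $L^2(\Gamma_1)$, $y_k|_I\to v_2$ in $H^{3/2}(I)$ and $\partial_{\nu_1}y_k|_I\to\partial_{\nu_1}y|_I$ in $H^{-\varepsilon}(I)$. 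Since the terms $\tfrac12\int_{\Gamma_1}(m\cdot\nu_1)|\nabla y_k|^2$ are absorbed at the Young step and never reappear, one may then pass to the limit in the inequality established for each $y_k$ and recover \eqref{uinequality}. I expect this limiting step --- the correct handling of the multiplier identity at the $H^{3/2-\varepsilon}$ regularity level on a polygon, with the corner contributions of $\Omega_1$ kept under control --- to be the main obstacle; the algebra (the identity itself, the exact cancellation of the $|\partial_{\tau_1}y|^2$ terms, and the use of $m\cdot\nu_1\ge\delta$ and $|m|\le R_1$ on $\Gamma_1$) is routine.
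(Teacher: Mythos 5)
Your proposal is correct and follows essentially the same route as the paper: the two-dimensional Rellich identity, the geometric conditions \eqref{intergeo}--\eqref{extgeo} to kill the $(m\cdot\nu_1)|\nabla y|^2$ term on $I$ and keep only the duality pairing there, a Young-type absorption of the tangential gradient on $\Gamma_1$, and an approximation argument to cope with the $H^{\frac32-\varepsilon}(\Omega_1)$ regularity. The only (harmless) differences are that you perform the absorption on $\Gamma_1$ pointwise via the normal/tangential decomposition of $\nabla y$, which even yields the slightly sharper constant $R_1^2/(2\delta)$, whereas the paper uses a global Young inequality with the parameter $\varepsilon_1=\delta/(2R_1^2)$, and that the paper's limiting step only invokes density of $H^2(\Omega_1)$ in $H^{\frac32-\varepsilon}(\Omega_1)$ rather than re-solving approximate boundary value problems.
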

\begin{proof}
Before starting the proof, we assume that $y \in H^{2}(\Omega_1).$ Using Green's formula, we have the following Rellich's identity (see \cite{komornik1994exact}, also see identity 3.17 of \cite{rao2015stability}):
\begin{equation}\label{Rellich}
-\int_{\Omega_1} \Delta y (m \cdot \nabla \overline{y}) dx=\frac{1}{2} \int_{\partial \Omega_1} (m \cdot \nu_1) |\nabla y|^2 d\Gamma-\int_{\partial \Omega_1} \partial_{\nu_1} y (m \cdot \nabla \overline{y}) d\Gamma.
\end{equation}
According to the geometric condition \eqref{intergeo}, we have from \eqref{Rellich} that 
\begin{equation}\label{Rellichconseq}
-\int_{\Omega_1} \Delta y (m \cdot \nabla \overline{y}) dx=\frac{1}{2} \int_{\Gamma_1} (m \cdot \nu_1) |\nabla y|^2 d\Gamma-\int_{\Gamma_1} \partial_{\nu_1} y (m \cdot \nabla \overline{y}) d\Gamma-\int_{I} \partial_{\nu_1} y (m \cdot \nabla \overline{y}) d\Gamma.
\end{equation}
Next, for any $\varepsilon_1 >0,$ using Young's inequality, we have 
\begin{equation}\label{youngRellich}
\Re\left\{\int_{\Gamma_1} \partial_{\nu_1} y (m \cdot \nabla \overline{y}) d\Gamma\right\} \leq \frac{1}{2\varepsilon_1} \int_{\Gamma_1} |\partial_{\nu_1} y|^2  d\Gamma + \frac{R_1^2 \varepsilon_1}{2} \int_{\Gamma_1} |\nabla y|^2  d\Gamma.
\end{equation}
Using the geometric condition \eqref{extgeo}, we deduce from \eqref{youngRellich} that
\begin{equation}\label{diffRellich}
\begin{split}
\frac{1}{2} \int_{\Gamma_1} (m \cdot \nu_1) |\nabla y|^2 d\Gamma-\Re\left\{\int_{\Gamma_1} \partial_{\nu_1} y (m \cdot \nabla \overline{y}) d\Gamma\right\}\geq \left(\frac{\delta}{2}-\frac{R_1^2 \varepsilon_1}{2}\right)\int_{\Gamma_1} |\nabla y|^2 d\Gamma- \frac{1}{2\varepsilon_1} \int_{\Gamma_1} |\partial_{\nu_1} y|^2  d\Gamma.
\end{split} 
\end{equation}
Choosing $\varepsilon_1=\frac{\delta}{2R_1^2} >0$ in \eqref{diffRellich}, we get 
\begin{equation}\label{epsilonRellich}\begin{split}
\frac{1}{2} \int_{\Gamma_1} (m \cdot \nu_1) |\nabla y|^2 d\Gamma-\Re\left\{\int_{\Gamma_1} \partial_{\nu_1} y (m \cdot \nabla \overline{y}) d\Gamma\right\} &\geq \frac{\delta}{4} \int_{\Gamma_1} |\nabla y|^2 d\Gamma- \frac{R_1^2}{\delta} \int_{\Gamma_1} |\partial_{\nu_1} y|^2  d\Gamma \\ &\geq - \frac{R_1^2}{\delta} \int_{\Gamma_1} |\partial_{\nu_1} y|^2  d\Gamma.\end{split} 
\end{equation}
Taking the real part of \eqref{Rellichconseq} and then inserting \eqref{epsilonRellich}, we obtain 
\begin{equation}\label{finalestu'}\begin{split}
-\Re\left\{\int_{\Omega_1} \Delta y (m \cdot \nabla \overline{y}) dx \right\} \geq - \frac{R_1^2}{\delta} \int_{\Gamma_1} |\partial_{\nu_1} y|^2  d\Gamma-\Re\left\{\int_{I} \partial_{\nu_1} y (m \cdot \nabla \overline{y}) d\Gamma\right\}.\end{split} 
\end{equation}
Now, for $y \in H^1(\Omega_1),$ the solution of \eqref{p5-y1}, we deduce from Lemma \ref{p5-lemregu} and Assumption \ref{anglesomega1} that $y \in H^{\frac{3}{2}-\varepsilon}(\Omega_1)$ for some $\varepsilon \in (0,\frac{1}{2}).$ By density, there exists  a sequence $\left\{y_{k}\right\}_{k \geq 0} \subset H^2(\Omega_1)$ such that 
 \begin{equation*}
y_{k} \underset{k \rightarrow \infty}{\longrightarrow} y \ \ \text{in} \ \ H^{\frac{3}{2}-\varepsilon}(\Omega_1). 
\end{equation*} Applying \eqref{finalestu'} to $y_{k}$ and then passing to the limit in $k,$ we obtain
\begin{equation*}-\Re\left\{\left\langle \Delta y, m \cdot \nabla \overline{y} \right\rangle_{H^{-\frac{1}{2}-\varepsilon}(\Omega_1) \times H^{\frac{1}{2}+\varepsilon}(\Omega_1)} \right\} \geq - \frac{R_1^2}{\delta} \norm{\partial_{\nu_1} y}_{H^{-\varepsilon}(\Gamma_1)}^2  - \Re\left\{\left\langle \partial_{\nu_1} y ,m \cdot \nabla \overline{y} \right\rangle_{H^{-\varepsilon}(I) \times H^{\varepsilon}(I)}\right\}.
\end{equation*} Since $y$ satisfies \eqref{p5-y1}, we see that the duality pairings become integrals in the first two terms of the above identity, which gives \eqref{uinequality}, as desired.
\end{proof}
\begin{lem}\label{lemwinequality}
		Assume that $\Omega_2$ is a polygonal domain as described above. Assume also that Assumptions  \ref{anglesomega2} and \ref{geometriccon} hold. Then, there exists $\varepsilon \in (0,\frac{1}{2})$ such that the solution $y \in H^2(\Omega_2)$ of system \eqref{p5-y2} satisfies the following inequality
		\begin{equation}\begin{split}\label{winequality}
		\Re\left\{\int_{\Omega_2} \Delta^2 y (m \cdot \nabla \overline{y}) dx\right\} \geq \frac{1}{2}a(y,\overline{y})-\frac{\delta(1-\mu)}{4R_2^2}\int_{\Gamma_2} |\partial_{\nu_2} y|^2  d\Gamma-\frac{2R_2^2}{\delta(1-\mu)}\int_{\Gamma_2}|\mathcal{B}_1 y|^2d\Gamma \\-\frac{R_2^2 M}{2}\int_{\Gamma_2}|\mathcal{B}_2 y|^2d\Gamma +\Re\left\{\left\langle \mathcal{B}_2 y , m \cdot \nabla \overline{y} \right\rangle_{H^{-\varepsilon}(I) \times H^{\varepsilon}(I)}\right\},\end{split} 
		\end{equation}
		where $R_2=\max\limits_{x \in \Gamma_2}|m(x)|$ and $M$ is a positive constant.
\end{lem}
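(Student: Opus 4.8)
The plan is to mirror the structure of Lemma~\ref{lemuinequality}: establish \eqref{winequality} first for $y\in H^{4}(\Omega_{2})$ and then pass to the general case by density. Given $y\in H^{2}(\Omega_{2})$ solving \eqref{p5-y2}, Lemma~\ref{p5-lemregw} (more precisely its proof) supplies a sequence $\{y_{k}\}_{k\ge0}\subset H^{4}(\Omega_{2})$ with $y_{k}\to y$ in $H^{2}(\Omega_{2})$, $\Delta^{2}y_{k}\to\Delta^{2}y$ in $L^{2}(\Omega_{2})$, $\mathcal{B}_{1}y_{k}\to\mathcal{B}_{1}y$ and $\mathcal{B}_{2}y_{k}\to\mathcal{B}_{2}y$ in $L^{2}(\Gamma_{2})$, and $\mathcal{B}_{2}y_{k}\to\mathcal{B}_{2}y$ in $H^{-\varepsilon}(I)$; since $\nabla y_{k}\to\nabla y$ in $H^{1}(\Omega_{2})$, the traces give $m\cdot\nabla\overline{y_{k}}\to m\cdot\nabla\overline{y}$ in $H^{\varepsilon}(I)$. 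Every term appearing in \eqref{winequality} is continuous for these convergences (the duality pairing over $I$ being the delicate one), so it suffices to prove the inequality for each $y_{k}$.

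For smooth $y$, I would combine the Pohozaev--Rellich multiplier identity for the plate form with Green's formula. Since the integrand $q(D^{2}y)$ of the form $a(\cdot,\overline{\cdot})$ is a quadratic form of degree two in the second derivatives and $\operatorname{div}m=2$ in $\mathbb{R}^{2}$, integration by parts yields
\[
\Re\,a\big(y,\overline{m\cdot\nabla y}\big)=a(y,\overline{y})+\tfrac12\int_{\partial\Omega_{2}}(m\cdot\nu_{2})\,q(D^{2}y)\,d\Gamma .
\]
Applying Green's formula \eqref{GF} with $f=y$, $g=m\cdot\nabla y$ and subtracting, I obtain
\begin{align*}
\Re\int_{\Omega_{2}}\Delta^{2}y\,(m\cdot\nabla\overline{y})\,dx
&= a(y,\overline{y})+\tfrac12\int_{\partial\Omega_{2}}(m\cdot\nu_{2})\,q(D^{2}y)\,d\Gamma\\
&\quad-\Re\int_{\partial\Omega_{2}}\mathcal{B}_{1}y\,\partial_{\nu_{2}}\overline{(m\cdot\nabla y)}\,d\Gamma
+\Re\int_{\partial\Omega_{2}}\mathcal{B}_{2}y\,\overline{(m\cdot\nabla y)}\,d\Gamma .
\end{align*}
I then split $\partial\Omega_{2}=I\cup\Gamma_{2}$. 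On $I$ one has $\nu_{2}=-\nu_{1}$, so Assumption~\ref{geometriccon} gives $m\cdot\nu_{2}=0$ there (killing the weighted term) and $\mathcal{B}_{1}y=0$ on $I$ kills the first boundary integral over $I$; the only contribution left from $I$ is $\Re\int_{I}\mathcal{B}_{2}y\,(m\cdot\nabla\overline{y})\,d\Gamma$, which in the limit becomes $\Re\langle\mathcal{B}_{2}y,\,m\cdot\nabla\overline{y}\rangle_{H^{-\varepsilon}(I)\times H^{\varepsilon}(I)}$, the last term of \eqref{winequality}.

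It remains to bound from below the $\Gamma_{2}$ part, $\tfrac12\int_{\Gamma_{2}}(m\cdot\nu_{2})q(D^{2}y)\,d\Gamma-\Re\int_{\Gamma_{2}}\mathcal{B}_{1}y\,\partial_{\nu_{2}}\overline{(m\cdot\nabla y)}\,d\Gamma+\Re\int_{\Gamma_{2}}\mathcal{B}_{2}y\,\overline{(m\cdot\nabla y)}\,d\Gamma$, by $-\tfrac12 a(y,\overline{y})$ minus the three stated boundary integrals. On $\Gamma_{2}$, Assumption~\ref{geometriccon} gives $m\cdot\nu_{2}\ge\delta$, and Remark~\ref{remarknormagg} together with $0<\mu<\tfrac12$ gives $q(D^{2}y)\ge(1-\mu)|D^{2}y|^{2}$, hence $\tfrac12\int_{\Gamma_{2}}(m\cdot\nu_{2})q(D^{2}y)\,d\Gamma\ge\tfrac{\delta(1-\mu)}{2}\int_{\Gamma_{2}}|D^{2}y|^{2}\,d\Gamma$. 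Writing $\partial_{\nu_{2}}(m\cdot\nabla y)=\partial_{\nu_{2}}y+m_{1}\partial_{\nu_{2}}y_{x_{1}}+m_{2}\partial_{\nu_{2}}y_{x_{2}}$ on $\Gamma_{2}$, with $|m_{1}\partial_{\nu_{2}}y_{x_{1}}+m_{2}\partial_{\nu_{2}}y_{x_{2}}|^{2}\le R_{2}^{2}|D^{2}y|^{2}$, Young's inequality on the $\mathcal{B}_{1}y$ cross-term with parameter $\tfrac{2R_{2}^{2}}{\delta(1-\mu)}$ produces exactly $-\tfrac{2R_{2}^{2}}{\delta(1-\mu)}\int_{\Gamma_{2}}|\mathcal{B}_{1}y|^{2}-\tfrac{\delta(1-\mu)}{4R_{2}^{2}}\int_{\Gamma_{2}}|\partial_{\nu_{2}}y|^{2}$ plus a term $-\tfrac{\delta(1-\mu)}{4}\int_{\Gamma_{2}}|D^{2}y|^{2}$ absorbed by half of the positive weighted term. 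For the $\mathcal{B}_{2}y$ cross-term I use $|m\cdot\nabla y|^{2}\le R_{2}^{2}|\nabla y|^{2}$ on $\Gamma_{2}$ and Young's inequality with parameter $\tfrac{R_{2}^{2}M}{2}$, then estimate $\int_{\Gamma_{2}}|\nabla y|^{2}$ by $a(y,\overline{y})$ through an interpolation-type trace inequality (of the kind of Theorem~1.5.1.10 of \cite{grisvard1985elliptic}) together with the equivalence of $a(\cdot,\overline{\cdot})^{1/2}$ with the $H^{2}$-norm on the pertinent subspace, choosing $M$ large enough that the bulk remainder is at most $\tfrac12 a(y,\overline{y})$; this produces $-\tfrac{R_{2}^{2}M}{2}\int_{\Gamma_{2}}|\mathcal{B}_{2}y|^{2}$ and accounts for the passage from $a(y,\overline{y})$ to $\tfrac12 a(y,\overline{y})$. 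Collecting everything and letting $k\to\infty$ gives \eqref{winequality}.

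The main obstacle is the $\Gamma_{2}$ analysis of the previous paragraph: the Pohozaev identity and the treatment of $I$ are essentially routine, but the bookkeeping on $\Gamma_{2}$ — expanding $\partial_{\nu_{2}}(m\cdot\nabla y)$ and $m\cdot\nabla y$ in the normal/tangential frame, verifying that every second-order remainder is nonnegative and dominated by $\tfrac{\delta(1-\mu)}{2}\int_{\Gamma_{2}}|D^{2}y|^{2}$, and above all controlling the tangential part of $m\cdot\nabla y$ (whose trace is not uniformly in $L^{2}(\Gamma_{2})$ at the $H^{2}$ level without invoking $a(y,\overline{y})$) with the precisely tuned constants $\tfrac{2R_{2}^{2}}{\delta(1-\mu)}$, $\tfrac{\delta(1-\mu)}{4R_{2}^{2}}$ and $M$ — is where the technical difficulty concentrates.
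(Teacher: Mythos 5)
Your proposal is correct and follows essentially the same route as the paper: the same multiplier identity for $\int_{\Omega_2}\Delta^2 y\,(m\cdot\nabla\overline{y})\,dx$ (which you derive via a Pohozaev--Rellich computation where the paper cites identity 3.4 of \cite{rao2005polynomial}), the same elimination of the interface terms via $m\cdot\nu_2=0$ and $\mathcal{B}_1y=0$ on $I$, the same Young-inequality bookkeeping on $\Gamma_2$ with the parameters $\varepsilon_2=\tfrac{\delta(1-\mu)}{4R_2^2}$ and $\varepsilon_3=\tfrac{1}{R_2^2M}$, the same trace-plus-norm-equivalence bound $\|\nabla y\|^2_{L^2(\Gamma_2)}\le M\,a(y,\overline{y})$, and the same density argument via Lemma \ref{p5-lemregw}. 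Your treatment of the passage to the limit in the duality pairing over $I$ is in fact slightly more explicit than the paper's.
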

\begin{proof}
 Before starting, we assume that $\mathcal{B}_1 y$ and $\mathcal{B}_2 y$ are smooth enough so that $y \in H^4(\Omega_2)$ (see \cite{grisvard1985elliptic}).  Using Green's formula and identity 3.4 of \cite{rao2005polynomial} (see also \cite{lagnese1989boundary} and  \cite{rao1993stabilization}), we have
 \begin{equation}\begin{split}
 \int_{\Omega_2} \Delta^2 y (m \cdot \nabla \overline{y}) dx= a(y,\overline{y})-\int_{\partial \Omega_2 } \left[ \mathcal{B}_1 y \partial_{\nu_2} (m \cdot \nabla \overline{y})-\mathcal{B}_2 y (m \cdot \nabla \overline{y}) \right] d\Gamma + \frac{1}{2} \int_{\partial \Omega_2 } (m \cdot \nu_2) b(y) d\Gamma ,
 \end{split}\end{equation}
 where
 \begin{equation*}
 b(y)=\left(\frac{\partial^2 y}{\partial x_1^2}\right)^2+\left(\frac{\partial^2 y}{\partial x_2^2}\right)^2+ 2\mu \frac{\partial^2 y}{\partial x_1^2}\frac{\partial^2 y}{\partial x_2^2}+2(1-\mu)\left(\frac{\partial^2 y}{\partial x_1 \partial x_2}\right)^2.
 \end{equation*}
 It follows from the geometric condition \eqref{intergeo} and the boundary condition $\mathcal{B}_1 y=0$ on $I$ that 
 \begin{equation}\begin{split}\label{deltasquare}
 \int_{\Omega_2} \Delta^2 y (m \cdot \nabla \overline{y}) dx= a(y,\overline{y})-\int_{\Gamma_2 }  \mathcal{B}_1 y \partial_{\nu_2} (m \cdot \nabla \overline{y}) d\Gamma + \int_{\Gamma_2 } \mathcal{B}_2 y (m \cdot \nabla \overline{y}) d\Gamma \\+ \frac{1}{2} \int_{\Gamma_2 } (m \cdot \nu_2) b(y) d\Gamma +\int_{I} \mathcal{B}_2 y (m \cdot \nabla \overline{y}) d\Gamma.
 \end{split}\end{equation}
 A straightforward computation shows that 
 \begin{equation}\begin{split}\label{bwn}
 b(y)&=\left(\frac{\partial^2 y}{\partial x_1^2}\right)^2+\left(\frac{\partial^2 y}{\partial x_2^2}\right)^2+ 2\mu \frac{\partial^2 y}{\partial x_1^2}\frac{\partial^2 y}{\partial x_2^2}+2(1-\mu)\left(\frac{\partial^2 y}{\partial x_1 \partial x_2}\right)^2 \\ &=\mu \left\{\left(\frac{\partial^2 y}{\partial x_1^2}\right)^2+\left(\frac{\partial^2 y}{\partial x_2^2}\right)^2+2 \frac{\partial^2 y}{\partial x_1^2}\frac{\partial^2 y}{\partial x_2^2}\right\}+(1-\mu)\left\{\left(\frac{\partial^2 y}{\partial x_1^2}\right)^2+\left(\frac{\partial^2 y}{\partial x_2^2}\right)^2+2\left(\frac{\partial^2 y}{\partial x_1 \partial x_2}\right)^2\right\}\\ &=\mu \left(\Delta y\right)^2+(1-\mu)\left\{\left(\frac{\partial^2 y}{\partial x_1^2}\right)^2+\left(\frac{\partial^2 y}{\partial x_2^2}\right)^2+2\left(\frac{\partial^2 y}{\partial x_1 \partial x_2}\right)^2\right\},
 \end{split}\end{equation}
 and 
 \begin{equation}\label{normalinequality}
\left|\partial_{\nu_2} (m \cdot \nabla y)\right| \leq \left|\partial_{\nu_2} y\right|+R_2\left\{\left(\frac{\partial^2 y}{\partial x_1^2}\right)^2+\left(\frac{\partial^2 y}{\partial x_2^2}\right)^2+2\left(\frac{\partial^2 y}{\partial x_1 \partial x_2}\right)^2 \right\}^{\frac{1}{2}}.
 \end{equation}
 Indeed, applying Young's inequality, we obtain from \eqref{normalinequality} that
 \begin{equation}\begin{split}\label{B1inequality}
 \Re\left\{\int_{\Gamma_2 }  \mathcal{B}_1 y \partial_{\nu_2} (m \cdot \nabla \overline{y}) d\Gamma\right\} &\leq \frac{1}{2\varepsilon_2} \int_{\Gamma_2} |\mathcal{B}_1 y|^2  d\Gamma + \frac{ \varepsilon_2}{2} \int_{\Gamma_2} \left| \partial_{\nu_2} (m \cdot \nabla \overline{y})\right|^2 d\Gamma \\ &\leq \frac{1}{2\varepsilon_2} \int_{\Gamma_2} |\mathcal{B}_1 y|^2  d\Gamma + \varepsilon_2 \int_{\Gamma_2} \left|\partial_{\nu_2} y\right|^2  d\Gamma \\&+ R_2^2 \varepsilon_2  \int_{\Gamma_2} \left\{\left(\frac{\partial^2 y}{\partial x_1^2}\right)^2+\left(\frac{\partial^2 y}{\partial x_2^2}\right)^2+2\left(\frac{\partial^2 y}{\partial x_1 \partial x_2}\right)^2 \right\} d\Gamma,
 \end{split}\end{equation}
 for any $\varepsilon_2>0.$ Taking the geometric condition \eqref{extgeo} into consideration, it follows from \eqref{bwn} that
 \begin{equation}\begin{split}\label{bwninequality}
\frac{1}{2} \int_{\Gamma_2 } (m \cdot \nu_2) b(y) d\Gamma & \geq \frac{\delta}{2} \int_{\Gamma_2 }b(y) d\Gamma \\ & \geq \frac{\delta \mu }{2} \int_{\Gamma_2 }\left(\Delta y\right)^2 d\Gamma  + \frac{\delta (1-\mu)}{2} \int_{\Gamma_2 }\left\{\left(\frac{\partial^2 y}{\partial x_1^2}\right)^2+\left(\frac{\partial^2 y}{\partial x_2^2}\right)^2+2\left(\frac{\partial^2 y}{\partial x_1 \partial x_2}\right)^2 \right\} d\Gamma \\ & \geq \frac{\delta (1-\mu)}{2} \int_{\Gamma_2 }\left\{\left(\frac{\partial^2 y}{\partial x_1^2}\right)^2+\left(\frac{\partial^2 y}{\partial x_2^2}\right)^2+2\left(\frac{\partial^2 y}{\partial x_1 \partial x_2}\right)^2 \right\} d\Gamma.
\end{split} \end{equation}
Combining \eqref{B1inequality} and \eqref{bwninequality}, we obtain 
\begin{equation}\begin{split}\label{B1&bwn}
\frac{1}{2} \int_{\Gamma_2 } (m \cdot \nu_2) b(y) d\Gamma-\Re\left\{\int_{\Gamma_2 }  \mathcal{B}_1 y \partial_{\nu_2} (m \cdot \nabla \overline{y}) d\Gamma\right\} \geq -\frac{1}{2\varepsilon_2} \int_{\Gamma_2} |\mathcal{B}_1 y|^2  d\Gamma - \varepsilon_2 \int_{\Gamma_2} \left|\partial_{\nu_2} y\right|^2  d\Gamma \\+ \left(\frac{\delta (1-\mu)}{2}-R_2^2 \varepsilon_2 \right)\int_{\Gamma_2 }\left\{\left(\frac{\partial^2 y}{\partial x_1^2}\right)^2+\left(\frac{\partial^2 y}{\partial x_2^2}\right)^2+2\left(\frac{\partial^2 y}{\partial x_1 \partial x_2}\right)^2 \right\} d\Gamma.
\end{split}\end{equation}
By taking $\varepsilon_2=\frac{\delta (1-\mu)}{4 R_2^2} > 0$ in \eqref{B1&bwn}, we deduce
\begin{equation*}\begin{split}
\frac{1}{2} \int_{\Gamma_2 } (m \cdot \nu_2) b(y) d\Gamma-\Re\left\{\int_{\Gamma_2 }  \mathcal{B}_1 y \partial_{\nu_2} (m \cdot \nabla \overline{y}) d\Gamma\right\} &\geq -\frac{2R_2^2}{\delta (1-\mu)} \int_{\Gamma_2} |\mathcal{B}_1 y|^2  d\Gamma - \frac{\delta (1-\mu)}{4 R_2^2} \int_{\Gamma_2} \left|\partial_{\nu_2} y\right|^2  d\Gamma \\&+ \frac{\delta (1-\mu)}{4} \int_{\Gamma_2 }\left\{\left(\frac{\partial^2 y}{\partial x_1^2}\right)^2+\left(\frac{\partial^2 y}{\partial x_2^2}\right)^2+2\left(\frac{\partial^2 y}{\partial x_1 \partial x_2}\right)^2 \right\} d\Gamma,\end{split}
\end{equation*}
from which we conclude
\begin{equation}\begin{split}\label{B1&bwnfinal}
\frac{1}{2} \int_{\Gamma_2 } (m \cdot \nu_2) b(y) d\Gamma-\Re\left\{\int_{\Gamma_2 }  \mathcal{B}_1 y \partial_{\nu_2} (m \cdot \nabla \overline{y}) d\Gamma\right\} \geq -\frac{2R_2^2}{\delta (1-\mu)} \int_{\Gamma_2} |\mathcal{B}_1 y|^2  d\Gamma \\ - \frac{\delta (1-\mu)}{4 R_2^2} \int_{\Gamma_2} \left|\partial_{\nu_2} y\right|^2  d\Gamma.
\end{split}\end{equation}
On the other hand, applying Young's inequality once more gives
\begin{equation}\label{B2inequality}
\Re \left\{\int_{\Gamma_2 } \mathcal{B}_2 y (m \cdot \nabla \overline{y}) d\Gamma\right\} \geq - \frac{1}{2\varepsilon_3} \int_{\Gamma_2} |\mathcal{B}_2 y|^2  d\Gamma - \frac{ R_2^2 \varepsilon_3 }{2} \int_{\Gamma_2} \left| \nabla y\right|^2 d\Gamma,
\end{equation}
for any $\varepsilon_3 >0.$ Thanks to the trace theorem (see Theorem 1.6.6 of \cite{brenner2008mathematical}), we know that there exists a constant $C_1 >0$ such that 
\begin{equation}\label{traceinequality}
\norm{\nabla y}_{L^2(\Gamma_2)}^2 \leq C_1 \norm{\nabla y}_{L^2(\Omega_2)} \norm{\nabla y}_{H^1(\Omega_2)} \leq  C_1 \norm{y}_{H^2(\Omega_2)}^2.
\end{equation}
Moreover, according to the fact that $a(y,\overline{y})$ is equivalent to the usual norm of $H^2(\Omega_2)$ on $H_{\ast}^2 (\Omega_2)$, we know that there exists a constant $C_2 >0$ such that
\begin{equation}\label{equivalenceinequality}
\norm{y}_{H^2(\Omega_2)}^2 \leq C_2 a(y,\overline{y}).
\end{equation}
The combination of \eqref{traceinequality} and \eqref{equivalenceinequality} yields 
\begin{equation}\label{trace&equiv}
\norm{\nabla y}_{L^2(\Gamma_2)}^2 \leq M a(y,\overline{y}),
\end{equation}
where the constant $M=C_1 C_2 >0.$ This, along with \eqref{B2inequality}, implies that
\begin{equation}\label{B2inequality&trace&equiv}
\Re \left\{\int_{\Gamma_2 } \mathcal{B}_2 y (m \cdot \nabla \overline{y}) d\Gamma\right\} \geq - \frac{1}{2\varepsilon_3} \int_{\Gamma_2} |\mathcal{B}_2 y|^2  d\Gamma - \frac{ R_2^2 M  \varepsilon_3 }{2} a(y,\overline{y}).
\end{equation}
By taking the real part of \eqref{deltasquare} and inserting \eqref{B1&bwnfinal} and \eqref{B2inequality&trace&equiv}, we obtain
\begin{equation}\begin{split}\label{deltasquare&B1&bwnfinal&B2}
\Re\left\{\int_{\Omega_2} \Delta^2 y (m \cdot \nabla \overline{y}) dx\right\} \geq \left(1-\frac{ R_2^2 M  \varepsilon_3 }{2}\right) a(y,\overline{y}) -\frac{2R_2^2}{\delta (1-\mu)} \int_{\Gamma_2} |\mathcal{B}_1 y|^2  d\Gamma  - \frac{\delta (1-\mu)}{4 R_2^2} \int_{\Gamma_2} \left|\partial_{\nu_2} y\right|^2  d\Gamma \\- \frac{1}{2\varepsilon_3} \int_{\Gamma_2} |\mathcal{B}_2 y|^2  d\Gamma + \Re\left\{\int_{I} \mathcal{B}_2 y (m \cdot \nabla \overline{y}) d\Gamma\right\}. \end{split}
\end{equation}
By letting $\varepsilon_3 =\frac{1}{R_2^2 M} >0$ in \eqref{deltasquare&B1&bwnfinal&B2}, we conclude that 
\begin{equation}\label{finalestw'}\begin{split}
\Re\left\{\int_{\Omega_2} \Delta^2 y (m \cdot \nabla \overline{y}) dx\right\} \geq \frac{1}{2} a(y,\overline{y}) -\frac{2R_2^2}{\delta (1-\mu)} \int_{\Gamma_2} |\mathcal{B}_1 y|^2  d\Gamma - \frac{\delta (1-\mu)}{4 R_2^2} \int_{\Gamma_2} \left|\partial_{\nu_2} y\right|^2  d\Gamma \\- \frac{R_2^2 M}{2} \int_{\Gamma_2} |\mathcal{B}_2 y|^2  d\Gamma + \Re\left\{\int_{I} \mathcal{B}_2 y (m \cdot \nabla \overline{y}) d\Gamma\right\}. \end{split}
\end{equation}
Now, for $y \in H^2(\Omega_2),$ which is a solution of \eqref{p5-y2}, we deduce, based on Lemma \ref{p5-lemregw} and Assumption  \ref{anglesomega2} that there exists a sequence $\left\{y_{k}\right\}_{k \geq 0} \subset H^4(\Omega_2)$ such that \begin{equation*}
y_{k} \underset{k \rightarrow \infty}{\longrightarrow} y \ \ \text{in} \ \ H^2(\Omega_2) \ \ \ \text{and} \ \ \ \Delta^2 y_{k} \underset{k \rightarrow \infty}{\longrightarrow} \Delta^2 y \ \ \text{in} \ \ L^2(\Omega_2). 
\end{equation*} By applying \eqref{finalestw'} to $y_{k}$ and passing the limit in $k,$ we obtain the desired estimation \eqref{winequality}.
\end{proof}
\\

We are now ready to present the proof of Theorem \ref{PolynomialStability}.\\

\textbf{Proof.} In accordance to Theorem \ref{BoriTomi}, a $C_{0}$-semigroup of contractions $(e^{t\mathcal{A}})_{t \geq 0 }$ on a Hilbert space $\HH$ verifies \eqref{ep} if the following conditions 
\begin{align*}
i \R \subset \rho(\mathcal{A}) &\qquad  (P1)
\end{align*} and \begin{align*} 
\lim\sup_{ |\lambda| \to \infty} \dfrac{1}{\lambda ^{2}} \left\| (i\lambda \mathcal{I}-\mathcal{A})^{-1} \right\|_{\mathcal{L}(\HH)}<+\infty &\qquad (P2)
\end{align*}
are satisfied. Since the resolvent of the operator $\AA$ is not compact in the energy space $\HH$ and $0 \in \rho(\mathcal{A})$, proving $ i \R \subset \rho(\mathcal{A})$ is equivalent to showing that $i \lambda \mathcal{I} - \AA$ is bijective in the energy space $\HH$ for all $\lambda \in \R^*.$  This is established in Section \ref{StrongSta} using a unique continuation theorem and Fredholm's alternative. Then, we still need to prove condition $(P2).$ This is checked by a contradiction argument. Indeed, suppose that there exists a sequence $\left\{\lambda_n\right\}_{n\geq 1} \subset \R^*_+$ and a sequence $\{U_n:=(u_n,v_n,\eta_n,w_n,z_n,\xi_n,\zeta_n)\}_{n \geq 1} \subset D(\AA)$ such that 
\begin{equation}\label{c1}
	\lambda_n \longrightarrow+\infty,\qquad \norm{U_n}_{\HH}=1,
\end{equation}
and 
\begin{equation}\label{c2}
	\lambda_n^2(i\lambda_n \mathcal{I}-\AA)U_n=(f_n^1,g_n^1,h_n^1,f_n^2,g_n^2,h_n^2,h_n^3)\longrightarrow 0\quad {\rm in}\  \HH.
\end{equation}
Our aim is to show that $\norm{(u_n,v_n,\eta_n,w_n,z_n,\xi_n,\zeta_n)}_{\HH} \longrightarrow 0.$ This condition provides a contradiction with \eqref{c1}. By detailing equation \eqref{c2}, we get the following system
\begin{eqnarray}
	\lambda_n^2 (i\lambda_n u_n-v_n) &=& f_n^1 \,\,\rightarrow \,0 \hskip 0.5 cm \mbox{in}\hskip 0.5 cm H_*^1(\Omega_1),\label{eq1}\\
	\lambda_n^2 (i\lambda_n v_n- \Delta u_n)&=& g_n^1 \,\,\rightarrow \,0 \hskip 0.5 cm \mbox{in}\hskip 0.5 cm L^2(\Omega_1),\label{eq2}\\
	\lambda_n^2 (i\lambda_n \eta_n-v_n+ \eta_n) &=& h_n^1 \,\,\rightarrow \,0 \hskip 0.5 cm \mbox{in}\hskip 0.5 cm L^2(\Gamma_1),\label{eq3}\\
	\lambda_n^2 (i\lambda_n w_n- z_n)  &=& f_n^2 \,\,\rightarrow
	\,0 \hskip 0.5 cm \mbox{in}\hskip 0.5 cm H_*^2(\Omega_2),\label{eq4}\\
	\lambda_n^2 (i\lambda_n z_n+ \Delta^2 w_n)  &=& g_n^2 \,\,\rightarrow
	\,0 \hskip 0.5 cm \mbox{in}\hskip 0.5 cm L^2(\Omega_2),\label{eq5}\\
	\lambda_n^2 (i\lambda_n \xi_n- \partial_{\nu_2}z_n + \xi_n)  &=& h_n^2 \,\,\rightarrow
	\,0 \hskip 0.5 cm \mbox{in}\hskip 0.5 cm L^2(\Gamma_2),\label{eq6}\\
	\lambda_n^2 (i\lambda_n \zeta_n- z_n+ \zeta_n) &=& h_n^3 \,\,\rightarrow
	\,0 \hskip 0.5 cm \mbox{in}\hskip 0.5 cm L^2(\Gamma_2).\label{eq7}
\end{eqnarray}
We notice from \eqref{c1} that $v_n$ and $z_n$ are uniformly bounded in $L^2(\Omega_1)$ and $L^2(\Omega_2),$ respectively. It follows from Equations
\eqref{eq1} and \eqref{eq4} that
\begin{equation}\label{esbdd}
\norm{u_n}_{L^2(\Omega_1)}=\frac{O(1)}{\lambda_n} \qquad \text{and} \qquad \norm{w_n}_{L^2(\Omega_2)}=\frac{O(1)}{\lambda_n}.
\end{equation}
On the other hand, inserting Equation \eqref{eq1} (resp. \eqref{eq4}) into Equation \eqref{eq2} (resp. \eqref{eq5}), we obtain the following system
\begin{eqnarray}
	-\lambda_n^2  u_n-\Delta u_n &=& \frac{i f_n^1}{\lambda_n}+\frac{g_n^1}{\lambda_n^2},\label{eq1inserteq2}\\
	-\lambda_n^2  w_n+\Delta^2 w_n &=& \frac{i f_n^2}{\lambda_n}+\frac{g_n^2}{\lambda_n^2}.\label{eq4inserteq5}
\end{eqnarray}
For clarity, the proof is divided into several lemmas:
\begin{lem}\label{p5-diss}
		The solution $(u_n,v_n,\eta_n,w_n,z_n,\xi_n,\zeta_n) \in D(\AA)$ of system \eqref{eq1}-\eqref{eq7} satisfies the following asymptotic behavior estimation
		\begin{equation}\label{esdiss}
		\int_{\Gamma_1} |\eta_n|^2 d\Gamma=\frac{o(1)}{\lambda_n^2}, \quad  \int_{\Gamma_2} |\xi_n|^2 d\Gamma=\frac{o(1)}{\lambda_n^2} \quad \text{and} \quad \int_{\Gamma_2} |\zeta_n|^2 d\Gamma=\frac{o(1)}{\lambda_n^2}.
		\end{equation}
\end{lem}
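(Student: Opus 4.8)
The plan is to exploit the dissipativity identity \eqref{p5-Adissipative} together with the assumption \eqref{c2}. First I would divide \eqref{c2} by $\lambda_n^2$ to write $(i\lambda_n \mathcal{I} - \AA)U_n = \lambda_n^{-2} F_n$, where $F_n = (f_n^1,g_n^1,h_n^1,f_n^2,g_n^2,h_n^2,h_n^3) \longrightarrow 0$ in $\HH$. Taking the inner product of this identity with $U_n$ in $\HH$ and then the real part, the term $i\lambda_n \norm{U_n}^2_\HH$ is purely imaginary and disappears, so that
$$-\Re\left(\AA U_n, U_n\right)_\HH = \frac{1}{\lambda_n^2}\,\Re\left(F_n, U_n\right)_\HH.$$
By Proposition \ref{p5-aplusbmdissip}, more precisely by \eqref{p5-Adissipative}, the left-hand side is exactly $\int_{\Gamma_1}|\eta_n|^2 d\Gamma + \int_{\Gamma_2}|\xi_n|^2 d\Gamma + \int_{\Gamma_2}|\zeta_n|^2 d\Gamma$.

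For the right-hand side I would apply the Cauchy--Schwarz inequality together with the normalization $\norm{U_n}_\HH = 1$ from \eqref{c1}, which yields $|\Re(F_n, U_n)_\HH| \le \norm{F_n}_\HH \norm{U_n}_\HH = \norm{F_n}_\HH = o(1)$ by \eqref{c2}. Combining the two computations gives
$$\int_{\Gamma_1}|\eta_n|^2 d\Gamma + \int_{\Gamma_2}|\xi_n|^2 d\Gamma + \int_{\Gamma_2}|\zeta_n|^2 d\Gamma = \frac{o(1)}{\lambda_n^2},$$
and since each of the three integrals is nonnegative, each is individually dominated by the sum, which is precisely \eqref{esdiss}.

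I do not expect a genuine obstacle in this lemma: it is the standard opening move of the frequency-domain resolvent estimate, and the only point requiring care is bookkeeping the extra factor $\lambda_n^2$ present in \eqref{c2} (as opposed to the plain formulation $(i\lambda_n\mathcal{I}-\AA)U_n \to 0$), since it is exactly this factor that upgrades the dissipation from $o(1)$ to the sharper $o(1)/\lambda_n^2$ recorded in \eqref{esdiss}. These estimates will subsequently feed into the control of the boundary traces of $v_n$ and $z_n$ through the dynamical-control equations \eqref{eq3}, \eqref{eq6} and \eqref{eq7} in the lemmas that follow.
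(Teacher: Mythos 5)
Your argument is correct and is essentially identical to the paper's proof: both take the real part of the $\HH$-inner product of \eqref{c2} with $U_n$, use the dissipativity identity \eqref{p5-Adissipative} to identify the left-hand side with the sum of the three boundary integrals, and bound the right-hand side by $\lambda_n^{-2}\norm{F_n}_{\HH}\norm{U_n}_{\HH}=o(1)/\lambda_n^2$. No further comment is needed.
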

\begin{proof}
Taking the inner product of \eqref{c2} with $U_n=(u_n,v_n,\eta_n,w_n,z_n,\xi_n,\zeta_n)$ in $\HH$, then using the fact that $U_n$ is uniformly bounded in $\HH$, we get 
\begin{equation*}
 \int_{\Gamma_1} |\eta_n|^2 d\Gamma + \int_{\Gamma_2} |\xi_n|^2 d\Gamma + \int_{\Gamma_2} |\zeta_n|^2 d\Gamma= \Re\{\left( (i\lambda_n \mathcal{I}-\AA)U_n,U_n\right)_{\HH}\}=\frac{o(1)}{\lambda_n^2},
\end{equation*}
which implies that 
\begin{equation*}
\int_{\Gamma_1} |\eta_n|^2 d\Gamma=\frac{o(1)}{\lambda_n^2}, \qquad  \int_{\Gamma_2} |\xi_n|^2 d\Gamma=\frac{o(1)}{\lambda_n^2} \qquad \text{and} \qquad \int_{\Gamma_2} |\zeta_n|^2 d\Gamma=\frac{o(1)}{\lambda_n^2}.
\end{equation*}
\end{proof}
\begin{lem}
		The solution $(u_n,v_n,\eta_n,w_n,z_n,\xi_n,\zeta_n) \in D(\AA)$ of system \eqref{eq1}-\eqref{eq7} satisfies the following asymptotic behavior estimation
		\begin{eqnarray}
		\norm{u_n}_{L^2(\Gamma_1)}&=&\frac{o(1)}{\lambda_n},\label{dissconseq1}\\
		\norm{\partial_{\nu_2}w_n}_{L^2(\Gamma_2)}&=&\frac{o(1)}{\lambda_n},\label{dissconseq2}\\
		\norm{w_n}_{L^2(\Gamma_2)}&=&\frac{o(1)}{\lambda_n}.\label{dissconseq3}
		\end{eqnarray}
\end{lem}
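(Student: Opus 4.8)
The strategy is to read off from the dynamical-control equations \eqref{eq3}, \eqref{eq6} and \eqref{eq7} that the boundary traces $u_n|_{\Gamma_1}$, $\partial_{\nu_2}w_n|_{\Gamma_2}$ and $w_n|_{\Gamma_2}$ coincide, up to lower-order remainders, with $\eta_n$, $\xi_n$ and $\zeta_n$ respectively, and then to invoke the dissipation estimate \eqref{esdiss} of Lemma \ref{p5-diss}. Along the way I will use that, by \eqref{c2} and the continuity of the trace operators $H^1(\Omega_1)\to H^{1/2}(\Gamma_1)$ and $H^2(\Omega_2)\to H^{3/2}(\Gamma_2)\times H^{1/2}(\Gamma_2)$, the quantities $f_n^1|_{\Gamma_1}$, $f_n^2|_{\Gamma_2}$ and $\partial_{\nu_2}f_n^2|_{\Gamma_2}$ all tend to $0$ in $L^2$ of the respective boundary.

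First I would treat $\Gamma_1$. Since $U_n\in D(\AA)$ we have $u_n,v_n\in H^1_\ast(\Omega_1)$, so their traces on $\Gamma_1$ are well defined; equation \eqref{eq3} gives $v_n|_{\Gamma_1}=i\lambda_n\eta_n+\eta_n-\lambda_n^{-2}h_n^1$ in $L^2(\Gamma_1)$, while taking the trace of \eqref{eq1} gives $i\lambda_n u_n|_{\Gamma_1}=v_n|_{\Gamma_1}+\lambda_n^{-2}f_n^1|_{\Gamma_1}$. Combining the two identities and dividing by $i\lambda_n$ yields
\[
u_n|_{\Gamma_1}=\eta_n+\frac{\eta_n}{i\lambda_n}+\frac{o(1)}{\lambda_n^3}\qquad\text{in }L^2(\Gamma_1).
\]
Taking the $L^2(\Gamma_1)$-norm and using $\norm{\eta_n}_{L^2(\Gamma_1)}=o(1)/\lambda_n$ from Lemma \ref{p5-diss} gives \eqref{dissconseq1}. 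The treatment of $\Gamma_2$ is identical in spirit: equations \eqref{eq6} and \eqref{eq7} give $\partial_{\nu_2}z_n|_{\Gamma_2}=i\lambda_n\xi_n+\xi_n-\lambda_n^{-2}h_n^2$ and $z_n|_{\Gamma_2}=i\lambda_n\zeta_n+\zeta_n-\lambda_n^{-2}h_n^3$ in $L^2(\Gamma_2)$, and since $z_n\in H^2_\ast(\Omega_2)$, applying the trace operators $\gamma_{2,1}$ and $\gamma_{2,2}$ to \eqref{eq4} gives $i\lambda_n w_n|_{\Gamma_2}=z_n|_{\Gamma_2}+\lambda_n^{-2}f_n^2|_{\Gamma_2}$ and $i\lambda_n\partial_{\nu_2}w_n|_{\Gamma_2}=\partial_{\nu_2}z_n|_{\Gamma_2}+\lambda_n^{-2}\partial_{\nu_2}f_n^2|_{\Gamma_2}$. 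Combining and dividing by $i\lambda_n$,
\[
w_n|_{\Gamma_2}=\zeta_n+\frac{\zeta_n}{i\lambda_n}+\frac{o(1)}{\lambda_n^3},\qquad
\partial_{\nu_2}w_n|_{\Gamma_2}=\xi_n+\frac{\xi_n}{i\lambda_n}+\frac{o(1)}{\lambda_n^3}
\]
in $L^2(\Gamma_2)$; taking $L^2(\Gamma_2)$-norms and using $\norm{\xi_n}_{L^2(\Gamma_2)},\norm{\zeta_n}_{L^2(\Gamma_2)}=o(1)/\lambda_n$ from Lemma \ref{p5-diss} yields \eqref{dissconseq2} and \eqref{dissconseq3}.

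This argument is essentially bookkeeping with the dissipation estimate, so I do not anticipate a genuine obstacle; the only point deserving a line of justification is that the traces of $f_n^1$, $f_n^2$ and $\partial_{\nu_2}f_n^2$ on the exterior boundaries converge to $0$ in $L^2$, which follows at once from \eqref{c2} and the continuity of the boundary trace maps on $H^1(\Omega_1)$ and $H^2(\Omega_2)$. $\square$
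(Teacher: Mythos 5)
Your proof is correct, and it reaches the three estimates by a somewhat more direct route than the paper. The ingredients are the same in both arguments: the dissipation estimate \eqref{esdiss} of Lemma \ref{p5-diss}, the boundary equations \eqref{eq3}, \eqref{eq6}, \eqref{eq7}, the traces of the interior equations \eqref{eq1} and \eqref{eq4}, and the continuity of $\gamma_1$ and $\gamma_2$ into $L^2$ of the respective boundaries (only $L^2$ continuity is needed, so the precise fractional exponents you quote, which are tailored to smooth boundaries, are immaterial here). The difference is in the execution. The paper first extracts the intermediate estimates $\norm{v_n}_{L^2(\Gamma_1)}=o(1)$ and $\norm{z_n}_{L^2(\Gamma_2)}=\norm{\partial_{\nu_2}z_n}_{L^2(\Gamma_2)}=o(1)$ from \eqref{eq3}, \eqref{eq6}, \eqref{eq7} together with \eqref{esdiss}, and then transfers these to $u_n$, $w_n$, $\partial_{\nu_2}w_n$ by multiplying the traced equation $i\lambda_n u_n-v_n=\lambda_n^{-2}f_n^1$ by $-i\lambda_n\overline{u_n}$, integrating over the boundary, and absorbing the cross terms. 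You instead solve the two trace identities simultaneously to get the explicit expansion $u_n|_{\Gamma_1}=\eta_n+\eta_n/(i\lambda_n)+o(1)/\lambda_n^3$ (and its analogues on $\Gamma_2$), after which the conclusion is immediate from $\norm{\eta_n}_{L^2(\Gamma_1)}=o(1)/\lambda_n$. Your version avoids the multiplier/absorption step entirely and makes transparent that the boundary traces of $u_n$, $w_n$, $\partial_{\nu_2}w_n$ agree with the dynamical controls to leading order; the paper's version yields the auxiliary estimates on $v_n$ and $z_n$ as a by-product (its \eqref{esv} and \eqref{esz}), which it reuses later. Both are complete proofs of the lemma.
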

\begin{proof}
First, using Equation \eqref{eq3} and the first estimation of \eqref{esdiss}, we get 
\begin{equation}\label{esv}
\norm{v_n}_{L^2(\Gamma_1)}=o(1).
\end{equation}
From Equation \eqref{eq1} and the fact that the trace operator $\varphi \longmapsto \varphi|_{\Gamma_1}$ is a linear and continuous mapping from $H^1_*(\Omega_1)$ into $L^2(\Gamma_1),$ we have 
\begin{equation*}
i\lambda_n u_n-v_n = \frac{f_n^1}{\lambda_n^2}  \,\,\rightarrow \,0 \hskip 0.5 cm \mbox{in}\hskip 0.5 cm L^2(\Gamma_1).
\end{equation*}
Multiplying the above equation by $-i \lambda_n \overline{u_n}$ and integrating over $\Gamma_1$, we obtain
\begin{equation*}
\int_{\Gamma_1}|\lambda_n u_n|^2 d\Gamma + \int_{\Gamma_1}i \lambda_n \overline{u_n} v_n d\Gamma =-\int_{\Gamma_1}\frac{i f_n^1 \overline{u_n}}{\lambda_n}  d\Gamma.
\end{equation*}
Using estimations \eqref{esbdd}, \eqref{esv} and the fact that $f_n^1$ converges to zero in $L^2(\Gamma_1),$ we deduce from the above equation that
\begin{equation*}
\int_{\Gamma_1}|\lambda_n u_n|^2 d\Gamma=o(1),
\end{equation*}
which gives the desired estimation \eqref{dissconseq1}.
Next, using Equations \eqref{eq6}, \eqref{eq7} and the second two estimations of \eqref{esdiss}, we obtain
\begin{equation}\label{esz}
\norm{\partial_{\nu_2}z_n}_{L^2(\Gamma_2)}=o(1) \quad \text{and} \quad \norm{z_n}_{L^2(\Gamma_2)}=o(1).
\end{equation}
Similar computation performed on Equation \eqref{eq4}, using estimation \eqref{esz} and the fact that $f_n^2$  converges to zero in $L^2(\Gamma_2)$, yield, as well,
\begin{equation*}
\int_{\Gamma_2}|\lambda_n \partial_{\nu_2} w_n|^2 d\Gamma=o(1) \quad \text{and} \quad \int_{\Gamma_2}|\lambda_n w_n|^2 d\Gamma=o(1),
\end{equation*}
which gives the desired estimations \eqref{dissconseq2} and \eqref{dissconseq3}.
\end{proof}
\begin{lem}
		The solution $(u_n,v_n,\eta_n,w_n,z_n,\xi_n,\zeta_n) \in D(\AA)$ of system \eqref{eq1}-\eqref{eq7} satisfies the following asymptotic behavior estimation
		\begin{eqnarray}
		\norm{\partial_{\nu_1} u_n}_{L^2(\Gamma_1)}&=&\frac{o(1)}{\lambda_n},\label{domainconseq1}\\
		\norm{\mathcal{B}_1 w_n}_{L^2(\Gamma_2)}&=&\frac{o(1)}{\lambda_n},\label{domainconseq2}\\
		\norm{\mathcal{B}_2 w_n}_{L^2(\Gamma_2)}&=&\frac{o(1)}{\lambda_n}.\label{domainconseq3}
		\end{eqnarray}
\end{lem}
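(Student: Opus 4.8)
The claimed estimates follow from the boundary conditions that define $D(\AA)$ combined with the damping estimates already established. Recall that for $U_n \in D(\AA)$ we have the relations $\partial_{\nu_1} u_n + \eta_n = 0$ on $\Gamma_1$, $\mathcal{B}_1 w_n + \xi_n = 0$ on $\Gamma_2$, and $\mathcal{B}_2 w_n - \zeta_n = 0$ on $\Gamma_2$. Hence, taking absolute values and $L^2(\Gamma_i)$ norms, one gets immediately $\norm{\partial_{\nu_1} u_n}_{L^2(\Gamma_1)} = \norm{\eta_n}_{L^2(\Gamma_1)}$, $\norm{\mathcal{B}_1 w_n}_{L^2(\Gamma_2)} = \norm{\xi_n}_{L^2(\Gamma_2)}$, and $\norm{\mathcal{B}_2 w_n}_{L^2(\Gamma_2)} = \norm{\zeta_n}_{L^2(\Gamma_2)}$. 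Now I would invoke Lemma \ref{p5-diss}, which gives $\int_{\Gamma_1} |\eta_n|^2 d\Gamma = o(1)/\lambda_n^2$, $\int_{\Gamma_2} |\xi_n|^2 d\Gamma = o(1)/\lambda_n^2$ and $\int_{\Gamma_2} |\zeta_n|^2 d\Gamma = o(1)/\lambda_n^2$. Taking square roots yields exactly $\norm{\partial_{\nu_1} u_n}_{L^2(\Gamma_1)} = o(1)/\lambda_n$, $\norm{\mathcal{B}_1 w_n}_{L^2(\Gamma_2)} = o(1)/\lambda_n$, and $\norm{\mathcal{B}_2 w_n}_{L^2(\Gamma_2)} = o(1)/\lambda_n$, which are the three desired estimates \eqref{domainconseq1}, \eqref{domainconseq2} and \eqref{domainconseq3}.

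Thus the structure of the proof is: first write down the three boundary identities from \eqref{p5-domain}; second, observe the corresponding equalities of $L^2$-norms; third, substitute the asymptotic bounds from Lemma \ref{p5-diss}. No multiplier computation, elliptic regularity, or Rellich-type identity is needed at this particular step — it is a direct consequence of encoding the feedback laws into the domain of the generator together with the dissipation estimate.

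There is essentially no obstacle here; the only thing to be careful about is that $U_n \in D(\AA)$ so the boundary traces $\partial_{\nu_1} u_n \in L^2(\Gamma_1)$, $\mathcal{B}_1 w_n, \mathcal{B}_2 w_n \in L^2(\Gamma_2)$ are genuinely well-defined elements of the respective $L^2$ spaces (which is precisely how $D(\AA)$ is set up, together with the regularity Proposition \ref{p5-propregsol}). Granting that, the argument is a one-line substitution. The real analytical work — exploiting \eqref{eq1inserteq2}, \eqref{eq4inserteq5} together with Lemmas \ref{lemuinequality} and \ref{lemwinequality} and the geometric Assumption \ref{geometriccon} to propagate these boundary smallness estimates into interior estimates on $\nabla u_n$ and on $a(w_n,\overline{w_n})$, and finally to contradict $\norm{U_n}_{\HH}=1$ — comes in the subsequent lemmas, not in this one.
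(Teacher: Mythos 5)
Your argument is exactly the paper's proof: read off the boundary identities $\partial_{\nu_1}u_n=-\eta_n$, $\mathcal{B}_1 w_n=-\xi_n$, $\mathcal{B}_2 w_n=\zeta_n$ from the definition of $D(\AA)$ and substitute the dissipation estimates of Lemma \ref{p5-diss}. It is correct and takes essentially the same (one-line) approach as the paper.
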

\begin{proof}
From the boundary conditions of \eqref{p5-domain}, we have $\partial_{\nu_1} u_n=-\eta_n$ on $\Gamma_1$, $\mathcal{B}_1 w_n=-\xi_n$ and $\mathcal{B}_2 w_n=\zeta_n$ on $\Gamma_2.$ Therefore, using estimation \eqref{esdiss}, the desired asymptotic estimations \eqref{domainconseq1}, \eqref{domainconseq2} and \eqref{domainconseq3} follow.
\end{proof}
\begin{lem}\label{lemu&winequality}
		Assume that Assumptions \ref{anglesomega1}, \ref{anglesomega2} and \ref{geometriccon} hold. Then, the solution $(u_n,v_n,\eta_n,w_n,z_n,\xi_n,\zeta_n) \in D(\AA)$ of system \eqref{eq1}-\eqref{eq7} satisfies the following estimation
		\begin{equation}\begin{split}\label{u&winequality}
		\Re\left\{\int_{\Omega_2} \Delta^2 w_n (m \cdot \nabla \overline{w_n}) dx -\int_{\Omega_1} \Delta u_n (m \cdot \nabla \overline{u_n}) dx\right\} \geq  \frac{1}{2} a(w_n,\overline{w_n})-\frac{R_1^2}{\delta} \int_{\Gamma_1} |\partial_{\nu_1} u_n|^2 d\Gamma \\ -\frac{\delta(1-\mu)}{4R_2^2}\int_{\Gamma_2} |\partial_{\nu_2} w_n|^2  d\Gamma -\frac{2R_2^2}{\delta(1-\mu)}\int_{\Gamma_2}|\mathcal{B}_1 w_n|^2d\Gamma -\frac{R_2^2 M}{2}\int_{\Gamma_2}|\mathcal{B}_2 w_n|^2d\Gamma, \end{split}
		\end{equation}
		where $R_1=\max\limits_{x \in \Gamma_1}|m(x)|,$ $R_2=\max\limits_{x \in \Gamma_2}|m(x)|$ and $M$ is a positive constant independent of $n.$
\end{lem}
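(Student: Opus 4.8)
The plan is to obtain \eqref{u&winequality} simply by adding the two multiplier inequalities already proved, namely Lemma \ref{lemuinequality} applied to $y=u_n$ and Lemma \ref{lemwinequality} applied to $y=w_n$, and then checking that the two duality pairings supported on the interface $I$ cancel exactly.

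First I would verify that $u_n$ and $w_n$ are admissible inputs for those two lemmas. Since $U_n\in D(\AA)$, the component $u_n\in H^1(\Omega_1)$ satisfies $\Delta u_n\in L^2(\Omega_1)$, $\partial_{\nu_1}u_n=-\eta_n\in L^2(\Gamma_1)$ on $\Gamma_1$, and $u_n=w_n$ on $I$ with $w_n|_I\in H^{\frac32}(I)$ (a trace of $w_n\in H^2(\Omega_2)$), so $u_n$ solves \eqref{p5-y1}; likewise $w_n\in H^2(\Omega_2)$ satisfies $\Delta^2 w_n\in L^2(\Omega_2)$, $\mathcal{B}_1 w_n=0$ and $\mathcal{B}_2 w_n=\partial_{\nu_1}u_n$ on $I$ — with $\partial_{\nu_1}u_n\in H^{-\varepsilon}(I)$ by Proposition \ref{p5-propregsol} — and $\mathcal{B}_1 w_n=-\xi_n$, $\mathcal{B}_2 w_n=\zeta_n\in L^2(\Gamma_2)$ on $\Gamma_2$, so $w_n$ solves \eqref{p5-y2} with $v_3=\partial_{\nu_1}u_n$, $v_4=-\xi_n$, $v_5=\zeta_n$. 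Summing the two resulting inequalities gives, up to the interface terms $-\Re\{\langle\partial_{\nu_1}u_n,m\cdot\nabla\overline{u_n}\rangle_{H^{-\varepsilon}(I)\times H^{\varepsilon}(I)}\}$ and $+\Re\{\langle\mathcal{B}_2 w_n,m\cdot\nabla\overline{w_n}\rangle_{H^{-\varepsilon}(I)\times H^{\varepsilon}(I)}\}$, precisely the right-hand side of \eqref{u&winequality}. It also needs to be noted that the constant $M$ furnished by Lemma \ref{lemwinequality} (the product $C_1C_2$ coming from the trace inequality \eqref{traceinequality} and the norm equivalence \eqref{equivalenceinequality}) depends only on $\Omega_2$ and $\mu$, hence is independent of $n$.

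The crux of the argument — and the step requiring the most care — is the cancellation of the two interface pairings. Here I would invoke the geometric condition \eqref{intergeo}, which forces $m$ to be tangent to $I$ along $I$; consequently $m\cdot\nabla\overline{y}$ restricted to $I$ reduces to $(m\cdot\tau_1)\,\partial_{\tau_1}(\overline{y}|_I)$, involving only the tangential derivative of the trace. Combined with the transmission conditions $u_n=w_n$ and $\mathcal{B}_2 w_n=\partial_{\nu_1}u_n$ on $I$, this identifies $m\cdot\nabla\overline{u_n}$ with $m\cdot\nabla\overline{w_n}$ as elements of $H^{\varepsilon}(I)$ (the needed $H^{\varepsilon}$-regularity being inherited from $w_n\in H^2(\Omega_2)$, for which $\nabla w_n|_I\in H^{\frac12}(I)\hookrightarrow H^{\varepsilon}(I)$) and identifies $\partial_{\nu_1}u_n$ with $\mathcal{B}_2 w_n$ as elements of $H^{-\varepsilon}(I)$, so the two pairings coincide and their signed sum vanishes. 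The delicate point to handle carefully is precisely this regularity bookkeeping on $I$ — making sure the duality brackets are meaningful and that the equalities hold in the appropriate Sobolev spaces — rather than any new estimate; once that is in place, \eqref{u&winequality} follows by a straightforward addition of the two previously established inequalities.
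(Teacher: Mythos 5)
Your proposal is correct and follows essentially the same route as the paper: apply Lemma \ref{lemuinequality} to $u_n$ (with $v_1=-\eta_n$, $v_2=w_n$) and Lemma \ref{lemwinequality} to $w_n$ (with $v_3=\partial_{\nu_1}u_n$, $v_4=-\xi_n$, $v_5=\zeta_n$), add the two inequalities, and cancel the interface pairings using \eqref{intergeo} together with the transmission conditions $u_n=w_n$ and $\mathcal{B}_2w_n=\partial_{\nu_1}u_n$ on $I$. The paper's proof performs exactly this cancellation by reducing $m\cdot\nabla$ to its tangential part on $I$, so no further comment is needed.
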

\begin{proof}
By the definition of $D(\AA),$ $u_n$ may be seen as the unique solution $u_n \in H^1(\Omega_1)$ of system \eqref{p5-y1} with $v_1=-\eta_n$ and $v_2=w_n.$ As Assumptions \ref{anglesomega1} and \ref{geometriccon} hold, it suffices to apply Lemma \ref{lemuinequality} for $u_n$ to obtain
\begin{equation}\label{unninequality}
		-\Re\left\{\int_{\Omega_1} \Delta u_n (m \cdot \nabla \overline{u_n}) dx\right\} \geq -\frac{R_1^2}{\delta} \int_{\Gamma_1} |\partial_{\nu_1} u_n|^2 d\Gamma - \Re\left\{\left\langle \partial_{\nu_1} u_n,m \cdot \nabla \overline{u_n}\right\rangle_{H^{-\varepsilon}(I) \times H^{\varepsilon}(I)}\right\}, 
		\end{equation}
for some $\varepsilon \in (0,\frac{1}{2}).$ Again, since $U_n \in D(\AA),$ we see that $w_n \in H^2(\Omega_2)$ is a solution of system \eqref{p5-y2} with $v_3=\partial_{\nu_1} u_n,$ $v_4=-\xi_n$ and $v_5=\zeta_n.$ It follows from Lemma \ref{lemwinequality}, Assumptions \ref{anglesomega2} and \ref{geometriccon} that
\begin{equation}\begin{split}\label{wnninequality}
		\Re\left\{\int_{\Omega_2} \Delta^2 w_n (m \cdot \nabla \overline{w_n}) dx\right\} \geq \frac{1}{2}a(w_n,\overline{w_n})-\frac{\delta(1-\mu)}{4R_2^2}\int_{\Gamma_2} |\partial_{\nu_2} w_n|^2  d\Gamma-\frac{2R_2^2}{\delta(1-\mu)}\int_{\Gamma_2}|\mathcal{B}_1 w_n|^2d\Gamma \\ -\frac{R_2^2 M}{2}\int_{\Gamma_2}|\mathcal{B}_2 w_n|^2d\Gamma+\Re\left\{\left\langle \mathcal{B}_2 w_n , m \cdot \nabla \overline{w_n} \right\rangle_{H^{-\varepsilon}(I) \times H^{\varepsilon}(I)}\right\}.\end{split} 
		\end{equation} Now, by adding \eqref{unninequality} and \eqref{wnninequality}, we obtain
\begin{equation}\begin{split}\label{proofu&winequality}
		\Re\left\{\int_{\Omega_2} \Delta^2 w_n (m \cdot \nabla \overline{w_n}) dx\right\}-\Re\left\{\int_{\Omega_1} \Delta u_n (m \cdot \nabla \overline{u_n}) dx\right\} \geq  \frac{1}{2} a(w_n,\overline{w_n})-\frac{R_1^2}{\delta} \int_{\Gamma_1} |\partial_{\nu_1} u_n|^2 d\Gamma \\ -\frac{\delta(1-\mu)}{4R_2^2}\int_{\Gamma_2} |\partial_{\nu_2} w_n|^2  d\Gamma -\frac{2R_2^2}{\delta(1-\mu)}\int_{\Gamma_2}|\mathcal{B}_1 w_n|^2d\Gamma -\frac{R_2^2 M}{2}\int_{\Gamma_2}|\mathcal{B}_2 w_n|^2d\Gamma \\ + \Re\left\{\left\langle \mathcal{B}_2 w_n , m \cdot \nabla \overline{w_n} \right\rangle_{H^{-\varepsilon}(I) \times H^{\varepsilon}(I)}-\left\langle \partial_{\nu_1} u_n,m \cdot \nabla \overline{u_n}\right\rangle_{H^{-\varepsilon}(I) \times H^{\varepsilon}(I)}\right\}. \end{split}
		\end{equation} The boundary conditions $u_n=w_n$ and $\mathcal{B}_2 w_n=\partial_{\nu_1} u_n$ on the interface $I,$ along with the geometric condition \eqref{intergeo}, lead to 
		\begin{equation*}\begin{split}
&\left\langle \mathcal{B}_2 w_n , m \cdot \nabla \overline{w_n} \right\rangle_{H^{-\varepsilon}(I) \times H^{\varepsilon}(I)}-\left\langle \partial_{\nu_1} u_n,m \cdot \nabla \overline{u_n}\right\rangle_{H^{-\varepsilon}(I) \times H^{\varepsilon}(I)} \\ \\ &=\left\langle \mathcal{B}_2 w_n, m \cdot (\nu_2 \partial_{\nu_2} \overline{w_n} +\tau_2 \partial_{\tau_2} \overline{w_n}) \right\rangle_{H^{-\varepsilon}(I) \times H^{\varepsilon}(I)}-\left\langle \partial_{\nu_1} u_n, m \cdot (\nu_1 \partial_{\nu_1} \overline{u_n} +\tau_1 \partial_{\tau_1} \overline{u_n}) \right\rangle_{H^{-\varepsilon}(I) \times H^{\varepsilon}(I)} \\ \\ &=	\left\langle \mathcal{B}_2 w_n, m \cdot \tau_2 \partial_{\tau_2} \overline{w_n} \right\rangle_{H^{-\varepsilon}(I) \times H^{\varepsilon}(I)}-\left\langle \partial_{\nu_1} u_n, m \cdot \tau_1 \partial_{\tau_1} \overline{u_n} \right\rangle_{H^{-\varepsilon}(I) \times H^{\varepsilon}(I)} \\ \\ &=\left\langle \mathcal{B}_2 w_n, m \cdot \tau_2 \partial_{\tau_2} \overline{w_n} \right\rangle_{H^{-\varepsilon}(I) \times H^{\varepsilon}(I)}-\left\langle \partial_{\nu_1} u_n,  m \cdot \tau_2 \partial_{\tau_2} \overline{w_n} \right\rangle_{H^{-\varepsilon}(I) \times H^{\varepsilon}(I)} \\ \\ &=\left\langle \mathcal{B}_2 w_n-\partial_{\nu_1} u_n, m \cdot \tau_2 \partial_{\tau_2} \overline{w_n} \right\rangle_{H^{-\varepsilon}(I) \times H^{\varepsilon}(I)}=0.\end{split}
		\end{equation*}
		This, together with \eqref{proofu&winequality}, proves \eqref{u&winequality}, as desired.
\end{proof}
\begin{lem}
		Assume that Assumptions \ref{anglesomega1} and  \ref{anglesomega2} hold. Then, the solution $(u_n,v_n,\eta_n,w_n,z_n,\xi_n,\zeta_n) \in D(\AA)$ of system \eqref{eq1}-\eqref{eq7} satisfies the following asymptotic behavior estimation
		\begin{equation}\label{u&wequiv}
\int_{\Omega_1} \left|\lambda_n u_n\right|^2 dx + \int_{\Omega_2} \left|\lambda_n w_n\right|^2  dx = 	\int_{\Omega_1} \left|\nabla u_n\right|^2 dx +a(w_n,\overline{w_n})+\frac{o(1)}{\lambda_n^2}.
		\end{equation}
\end{lem}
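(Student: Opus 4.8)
The plan is to test the reduced equations \eqref{eq1inserteq2} and \eqref{eq4inserteq5} against $\overline{u_n}$ and $\overline{w_n}$ respectively, and to add the two resulting identities so that the interface contributions cancel. First I would multiply \eqref{eq1inserteq2} by $\overline{u_n}$, integrate over $\Omega_1$ and apply Green's formula; writing $\partial\Omega_1=\Gamma_1\cup I$ this gives
\[
-\lambda_n^2\int_{\Omega_1}|u_n|^2\,dx+\int_{\Omega_1}|\nabla u_n|^2\,dx-\int_{\Gamma_1}\partial_{\nu_1}u_n\,\overline{u_n}\,d\Gamma-\langle\partial_{\nu_1}u_n,\overline{u_n}\rangle_{H^{-\varepsilon}(I)\times H^{\varepsilon}(I)}=\int_{\Omega_1}\Big(\frac{if_n^1}{\lambda_n}+\frac{g_n^1}{\lambda_n^2}\Big)\overline{u_n}\,dx .
\]
Next I would multiply \eqref{eq4inserteq5} by $\overline{w_n}$, integrate over $\Omega_2$ and use the fourth-order Green's formula \eqref{GF} with $\partial\Omega_2=\Gamma_2\cup I$; on $I$ the transmission conditions $\mathcal{B}_1w_n=0$, $\mathcal{B}_2w_n=\partial_{\nu_1}u_n$ and $u_n=w_n$ turn the interface contribution into exactly $\langle\partial_{\nu_1}u_n,\overline{u_n}\rangle_{H^{-\varepsilon}(I)\times H^{\varepsilon}(I)}$, yielding
\[
-\lambda_n^2\int_{\Omega_2}|w_n|^2\,dx+a(w_n,\overline{w_n})-\int_{\Gamma_2}\big(\mathcal{B}_1w_n\,\partial_{\nu_2}\overline{w_n}-\mathcal{B}_2w_n\,\overline{w_n}\big)d\Gamma+\langle\partial_{\nu_1}u_n,\overline{u_n}\rangle_{H^{-\varepsilon}(I)\times H^{\varepsilon}(I)}=\int_{\Omega_2}\Big(\frac{if_n^2}{\lambda_n}+\frac{g_n^2}{\lambda_n^2}\Big)\overline{w_n}\,dx .
\]
The interface terms are the same duality pairing in $H^{-\varepsilon}(I)\times H^{\varepsilon}(I)$, which is meaningful since $u_n\in H^{3/2-\varepsilon}(\Omega_1)$ and $\partial_{\nu_1}u_n\in H^{-\varepsilon}(I)$ by Proposition \ref{p5-propregsol}.

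Adding the two identities, the interface pairings cancel, and taking real parts I would be left with
\[
-\lambda_n^2\Big(\int_{\Omega_1}|u_n|^2dx+\int_{\Omega_2}|w_n|^2dx\Big)+\int_{\Omega_1}|\nabla u_n|^2dx+a(w_n,\overline{w_n})=\Re\Big\{\int_{\Gamma_1}\partial_{\nu_1}u_n\overline{u_n}d\Gamma+\int_{\Gamma_2}\big(\mathcal{B}_1w_n\partial_{\nu_2}\overline{w_n}-\mathcal{B}_2w_n\overline{w_n}\big)d\Gamma+\mathcal{R}_n\Big\},
\]
where $\mathcal{R}_n$ collects the two right-hand side integrals. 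It then suffices to show the whole right-hand side is $o(1)/\lambda_n^2$. For the $\Gamma_1$ term I would use Cauchy--Schwarz together with $\|\partial_{\nu_1}u_n\|_{L^2(\Gamma_1)}=o(1)/\lambda_n$ from \eqref{domainconseq1} and $\|u_n\|_{L^2(\Gamma_1)}=o(1)/\lambda_n$ from \eqref{dissconseq1}; for the $\Gamma_2$ terms I would combine \eqref{domainconseq2}--\eqref{domainconseq3} (for $\mathcal{B}_1w_n$ and $\mathcal{B}_2w_n$) with \eqref{dissconseq2}--\eqref{dissconseq3} (for $\partial_{\nu_2}w_n$ and $w_n$), so that each product is $o(1)/\lambda_n^2$. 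For $\mathcal{R}_n$ I would use $f_n^1,g_n^1\to0$ in $L^2(\Omega_1)$, $f_n^2,g_n^2\to0$ in $L^2(\Omega_2)$ and $\|u_n\|_{L^2(\Omega_1)}=\|w_n\|_{L^2(\Omega_2)}=O(1)/\lambda_n$ from \eqref{esbdd}, which again gives $o(1)/\lambda_n^2$. Rearranging the displayed identity then produces exactly \eqref{u&wequiv}.

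The computation is essentially routine; the only point requiring care is the justification of the two Green's formulas in view of the low regularity of $u_n$ (only $H^{3/2-\varepsilon}(\Omega_1)$) and of $\partial_{\nu_1}u_n$ (only $H^{-\varepsilon}(I)$). I would handle this exactly as in the proof of Lemma \ref{lemu&winequality}: first establish the identities for the approximating sequences $\{u_{n,k}\}\subset H^2(\Omega_1)$ and $\{w_{n,k}\}\subset H^4(\Omega_2)$ supplied by Lemmas \ref{p5-lemregu}--\ref{p5-lemregw}, where all boundary contributions are genuine integrals, and then pass to the limit, the interface contributions becoming the stated duality pairings. Once this is in place, the cancellation of the interface term is immediate from the transmission conditions on $I$, and the estimate follows from the asymptotic bounds already proved in this section.
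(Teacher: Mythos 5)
Your proposal is correct and follows essentially the same route as the paper: multiply the reduced equations \eqref{eq1inserteq2} and \eqref{eq4inserteq5} by the conjugate unknowns, integrate, apply the two Green's formulas, cancel the interface contributions via the transmission conditions, absorb the exterior boundary terms using \eqref{dissconseq1}--\eqref{dissconseq3} and \eqref{domainconseq1}--\eqref{domainconseq3}, and control the source terms with \eqref{esbdd}. Your explicit treatment of the interface terms as $H^{-\varepsilon}(I)\times H^{\varepsilon}(I)$ duality pairings, justified by the density argument of Lemmas \ref{p5-lemregu} and \ref{p5-lemregw}, matches the paper's closing appeal to the same approximation argument.
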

\begin{proof}
Before starting the proof, we assume that $u_n \in H^2(\Omega_1)$ and $w_n \in H^4(\Omega_2).$ Multiplying Equation \eqref{eq1inserteq2} (resp. \eqref{eq4inserteq5}) by $-\overline{u_n}$ (resp. $-\overline{w_n}$), integrating over $\Omega_1$ (resp. $\Omega_2$) and applying Green's formula, we obtain 
\begin{equation}
	\int_{\Omega_1} \left|\lambda_n  u_n\right|^2 dx -\int_{\Omega_1} \left|\nabla  u_n\right|^2 dx + \int_{\partial \Omega_1} \partial_{\nu_1} u_n \overline{u_n} d\Gamma = - \int_{\Omega_1} \left(\frac{i f_n^1}{\lambda_n}+\frac{g_n^1}{\lambda_n^2}\right) \overline{u_n} dx,\label{eq1inserteq2mult}
	\end{equation}
	and
	\begin{equation}
	\int_{\Omega_2} \left|\lambda_n  w_n\right|^2 dx -a(w_n,\overline{w_n}) + \int_{\partial \Omega_2} \left( \mathcal{B}_1 w_n \partial_{\nu_2}\overline{w_n}-\mathcal{B}_2 w_n \overline{w_n}   \right) d\Gamma = - \int_{\Omega_2} \left(\frac{i f_n^2}{\lambda_n}+\frac{g_n^2}{\lambda_n^2}\right) \overline{w_n} dx.\label{eq4inserteq5mult}
	\end{equation}
	Taking into consideration that $f_n^1,$ $g_n^1$ converge to zero in $L^2(\Omega_1),$ $f_n^2,$ $g_n^2$ converge to zero in $L^2(\Omega_2)$ and using \eqref{esbdd}, we get from \eqref{eq1inserteq2mult} and \eqref{eq4inserteq5mult} that  
	\begin{equation*}
	\int_{\Omega_1} \left|\lambda_n  u_n\right|^2 dx -\int_{\Omega_1} \left|\nabla  u_n\right|^2 dx + \int_{\partial \Omega_1} \partial_{\nu_1} u_n \overline{u_n} d\Gamma = \frac{o(1)}{\lambda_n^2},
	\end{equation*}
	and
	\begin{equation*}
	\int_{\Omega_2} \left|\lambda_n  w_n\right|^2 dx -a(w_n,\overline{w_n}) + \int_{\partial \Omega_2} \left( \mathcal{B}_1 w_n \partial_{\nu_2}\overline{w_n}-\mathcal{B}_2 w_n \overline{w_n}   \right) d\Gamma = \frac{o(1)}{\lambda_n^2}.
	\end{equation*}
	Adding both equations and taking into account the boundary conditions on the interface, as well as the estimations \eqref{dissconseq1}, \eqref{dissconseq2}, \eqref{dissconseq3}, \eqref{domainconseq1}, \eqref{domainconseq2} and \eqref{domainconseq3}, we obtain 
	\begin{equation*}
\int_{\Omega_1} \left|\lambda_n u_n\right|^2 dx - \int_{\Omega_1} \left|\nabla u_n\right|^2 dx + \int_{\Omega_2} \left|\lambda_n w_n\right|^2  dx - a(w_n,\overline{w_n})=\frac{o(1)}{\lambda_n^2}.
		\end{equation*}
By following a similar argument to the end of the proof of Lemma \ref{lemuinequality} and Lemma \ref{lemwinequality}, we obtain the desired estimation \eqref{u&wequiv}.
\end{proof}
\begin{lem}
		Assume that Assumptions \ref{anglesomega1}, \ref{anglesomega2} and \ref{geometriccon} hold. Then, the solution $(u_n,v_n,\eta_n,w_n,z_n,\xi_n,\zeta_n) \in D(\AA)$ of system \eqref{eq1}-\eqref{eq7} satisfies the following asymptotic behavior estimation
		\begin{equation}\label{u&w&aww}
\int_{\Omega_1} \left|\lambda_n u_n\right|^2 dx=o(1), \quad  \int_{\Omega_2} \left|\lambda_n w_n\right|^2  dx=o(1) \quad \text{and} \quad a(w_n,\overline{w_n})=o(1).
		\end{equation}
\end{lem}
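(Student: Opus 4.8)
The plan is to run the multiplier method with the radial field $m(x)=x-x_0$ of Assumption~\ref{geometriccon}, feeding to the two inequalities of Lemma~\ref{lemu&winequality} a Rellich‑type identity together with the boundary asymptotics already obtained. As in Lemma~\ref{lemuinequality} and Lemma~\ref{lemwinequality}, I would first work under the extra regularity $u_n\in H^2(\Omega_1)$, $w_n\in H^4(\Omega_2)$, and pass to the limit by density at the very end. Multiplying \eqref{eq1inserteq2} by $-(m\cdot\nabla\overline{u_n})$ and integrating over $\Omega_1$, multiplying \eqref{eq4inserteq5} by $m\cdot\nabla\overline{w_n}$ and integrating over $\Omega_2$, and then adding the two relations, I would obtain
\begin{equation*}
\int_{\Omega_2}\Delta^2 w_n\,(m\cdot\nabla\overline{w_n})\,dx-\int_{\Omega_1}\Delta u_n\,(m\cdot\nabla\overline{u_n})\,dx=\lambda_n^2\left(\int_{\Omega_1}u_n\,(m\cdot\nabla\overline{u_n})\,dx+\int_{\Omega_2}w_n\,(m\cdot\nabla\overline{w_n})\,dx\right)+R_n,
\end{equation*}
where $R_n=\int_{\Omega_1}\left(\frac{i f_n^1}{\lambda_n}+\frac{g_n^1}{\lambda_n^2}\right)(m\cdot\nabla\overline{u_n})\,dx+\int_{\Omega_2}\left(\frac{i f_n^2}{\lambda_n}+\frac{g_n^2}{\lambda_n^2}\right)(m\cdot\nabla\overline{w_n})\,dx$. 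Since $\norm{U_n}_{\HH}=1$ keeps $\norm{\nabla u_n}_{L^2(\Omega_1)}$ and $\norm{w_n}_{H^2(\Omega_2)}$ bounded (by the norm equivalences recalled after \eqref{starsnorm}), and since $f_n^1,g_n^1\to0$ in $L^2(\Omega_1)$ and $f_n^2,g_n^2\to0$ in $L^2(\Omega_2)$ by \eqref{c2}, the Cauchy--Schwarz inequality gives $R_n=o(1)$.

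Next I would take real parts of the right‑hand side. For $v\in H^1(\Omega)$ one has the elementary identity $2\Re\int_{\Omega}v\,(m\cdot\nabla\overline v)\,dx=\int_{\partial\Omega}(m\cdot\nu)\,|v|^2\,d\Gamma-2\int_{\Omega}|v|^2\,dx$, because $\text{div}\,m=2$ in $\R^2$. Applying it to $u_n$ on $\Omega_1$ and to $w_n$ on $\Omega_2$, and using the geometric condition \eqref{intergeo}, which gives $m\cdot\nu_1=0$ on $I$ and hence (since $\nu_2=-\nu_1$ on the common interface $I$) also $m\cdot\nu_2=0$ on $I$, the interface contributions drop out; the remaining boundary integrals $\int_{\Gamma_1}(m\cdot\nu_1)|u_n|^2\,d\Gamma$ and $\int_{\Gamma_2}(m\cdot\nu_2)|w_n|^2\,d\Gamma$ are $o(1)/\lambda_n^2$ by \eqref{dissconseq1} and \eqref{dissconseq3}. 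Together with $R_n=o(1)$ this gives
\begin{equation*}
\Re\left\{\int_{\Omega_2}\Delta^2 w_n\,(m\cdot\nabla\overline{w_n})\,dx-\int_{\Omega_1}\Delta u_n\,(m\cdot\nabla\overline{u_n})\,dx\right\}=o(1)-\lambda_n^2\left(\int_{\Omega_1}|u_n|^2\,dx+\int_{\Omega_2}|w_n|^2\,dx\right).
\end{equation*}

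Finally, Lemma~\ref{lemu&winequality} bounds that same left‑hand side from below by $\tfrac12 a(w_n,\overline{w_n})$ minus fixed multiples of the four boundary integrals $\int_{\Gamma_1}|\partial_{\nu_1}u_n|^2\,d\Gamma$, $\int_{\Gamma_2}|\partial_{\nu_2}w_n|^2\,d\Gamma$, $\int_{\Gamma_2}|\mathcal{B}_1 w_n|^2\,d\Gamma$ and $\int_{\Gamma_2}|\mathcal{B}_2 w_n|^2\,d\Gamma$, all of which are $o(1)/\lambda_n^2$ by \eqref{dissconseq2}, \eqref{domainconseq1}, \eqref{domainconseq2} and \eqref{domainconseq3}. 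Combining the two displays yields
\begin{equation*}
\lambda_n^2\left(\int_{\Omega_1}|u_n|^2\,dx+\int_{\Omega_2}|w_n|^2\,dx\right)+\frac12 a(w_n,\overline{w_n})\le o(1),
\end{equation*}
and since each of the three nonnegative quantities on the left is then $o(1)$, we get $\int_{\Omega_1}|\lambda_n u_n|^2\,dx=o(1)$, $\int_{\Omega_2}|\lambda_n w_n|^2\,dx=o(1)$ and $a(w_n,\overline{w_n})=o(1)$, which is exactly \eqref{u&w&aww}; the density argument then removes the extra regularity, just as at the end of Lemma~\ref{lemwinequality}. I expect the only care to be needed in the sign bookkeeping of the multiplier identity and in checking that every remainder is genuinely $o(1)$ and not merely $O(1)$; this is precisely where the geometric assumption \eqref{intergeo} (which annihilates the interface terms) and the dissipation‑driven trace estimates of the preceding lemmas do the work, so that the computation itself stays routine.
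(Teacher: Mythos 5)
Your proposal is correct and follows essentially the same route as the paper: multiply \eqref{eq1inserteq2} and \eqref{eq4inserteq5} by the multiplier $m\cdot\nabla$ of the respective conjugate unknowns, use the divergence/Rellich identity to convert $\lambda_n^2\int u_n(m\cdot\nabla\overline{u_n})$ into $-\int|\lambda_n u_n|^2$ plus boundary terms, kill the interface contributions via \eqref{intergeo} and the exterior ones via \eqref{dissconseq1}--\eqref{dissconseq3}, and then invoke Lemma \ref{lemu&winequality} together with \eqref{domainconseq1}--\eqref{domainconseq3} to absorb the remaining boundary integrals. Your explicit statement of the real-part identity $2\Re\int_{\Omega}v\,(m\cdot\nabla\overline v)\,dx=\int_{\partial\Omega}(m\cdot\nu)|v|^2\,d\Gamma-2\int_{\Omega}|v|^2\,dx$ and of the observation $\nu_2=-\nu_1$ on $I$ is, if anything, slightly more careful than the paper's presentation, but the argument is the same.
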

\begin{proof} 
Multiplying Equation \eqref{eq1inserteq2} by $m \cdot \nabla \overline{u_n}$ and integrating over $\Omega_1$, and similarly for Equation \eqref{eq4inserteq5} over $\Omega_2$, we obtain
\begin{equation}
	- \int_{\Omega_1} \lambda_n^2  u_n \left(m \cdot \nabla \overline{u_n}\right) dx -\int_{\Omega_1} \Delta  u_n \left(m \cdot \nabla \overline{u_n}\right)dx= \int_{\Omega_1} \left(\frac{i f_n^1}{\lambda_n}+\frac{g_n^1}{\lambda_n^2}\right) \left(m \cdot \nabla \overline{u_n}\right) dx,\label{multmdotnablau}
	\end{equation}
	and
	\begin{equation}
	- \int_{\Omega_2} \lambda_n^2  w_n \left(m \cdot \nabla \overline{w_n}\right) dx +\int_{\Omega_2} \Delta^2 w_n \left(m \cdot \nabla \overline{w_n}\right)dx= \int_{\Omega_2} \left(\frac{i f_n^2}{\lambda_n}+\frac{g_n^2}{\lambda_n^2}\right) \left(m \cdot \nabla \overline{w_n}\right) dx.\label{multmdotnablaw}
	\end{equation}
	Using Cauchy-Schwartz inequality and the fact that $a(w_n,\overline{w_n})$ is equivalent to the usual norm of $H^2(\Omega_2)$ on $H_{\ast}^2 (\Omega_2)$, we can observe that $\nabla u_n$ is uniformly bounded in $L^2(\Omega_1)$ and $a(w_n,\overline{w_n})$ is uniformly bounded. Additionally, $f_n^1$ and $g_n^1$ converge to zero in $L^2(\Omega_1)$, while $f_n^2$ and $g_n^2$ converge to zero in $L^2(\Omega_2)$. Hence, we obtain
	\begin{equation}\label{rsideconvzero1}
	\left|\int_{\Omega_1} \left(\frac{i f_n^1}{\lambda_n}+\frac{g_n^1}{\lambda_n^2}\right) \left(m \cdot \nabla \overline{u_n}\right) dx\right| \leq \norm{m}_{\infty} \left(\frac{\norm{f_n^1}_{L^2(\Omega_1)}}{\lambda_n}+\frac{\norm{g_n^1}_{L^2(\Omega_1)}}{\lambda_n^2}\right)\norm{\nabla u_n}_{L^2(\Omega_1)}=\frac{o(1)}{\lambda_n},
	\end{equation}
and 	\begin{equation}\label{rsideconvzero2}
	\left|\int_{\Omega_2} \left(\frac{i f_n^2}{\lambda_n}+\frac{g_n^2}{\lambda_n^2}\right) \left(m \cdot \nabla \overline{w_n}\right) dx\right| \leq \norm{m}_{\infty} \left(\frac{\norm{f_n^2}_{L^2(\Omega_2)}}{\lambda_n}+\frac{\norm{g_n^2}_{L^2(\Omega_2)}}{\lambda_n^2}\right)\norm{\nabla w_n}_{L^2(\Omega_2)}=\frac{o(1)}{\lambda_n}.
	\end{equation}
	On the other hand, applying Green's formula to the first terms of \eqref{multmdotnablau} and \eqref{multmdotnablaw} yields
	\begin{equation}\label{firstterm1}
	\int_{\Omega_1} \lambda_n^2  u_n \left(m \cdot \nabla \overline{u_n}\right) dx = \frac{1}{2} \int_{\partial \Omega_1} \left( m \cdot \nu_1 \right) \left| \lambda_n u_n \right|^2 d\Gamma -\int_{\Omega_1} \left| \lambda_n u_n \right|^2 dx,
	\end{equation}
and 	\begin{equation}\label{firstterm2}
	\int_{\Omega_2} \lambda_n^2  w_n \left(m \cdot \nabla \overline{w_n}\right) dx = \frac{1}{2} \int_{\partial \Omega_2} \left( m \cdot \nu_2 \right) \left| \lambda_n w_n \right|^2 d\Gamma -\int_{\Omega_2} \left| \lambda_n w_n \right|^2 dx.
	\end{equation}
	Then, substituting \eqref{firstterm1} (resp. \eqref{firstterm2}) into \eqref{multmdotnablau} (resp. \eqref{multmdotnablaw}) and considering the estimates \eqref{rsideconvzero1} and \eqref{rsideconvzero2}, we obtain
	\begin{equation*}
	\int_{\Omega_1} \left| \lambda_n u_n \right|^2 dx -\frac{1}{2} \int_{\partial \Omega_1} \left( m \cdot \nu_1 \right) \left| \lambda_n u_n \right|^2 d\Gamma -\int_{\Omega_1} \Delta  u_n \left(m \cdot \nabla \overline{u_n}\right)dx= \frac{o(1)}{\lambda_n},
	\end{equation*}
	and
	\begin{equation*}
	\int_{\Omega_2} \left| \lambda_n w_n \right|^2 dx - \frac{1}{2} \int_{\partial \Omega_2} \left( m \cdot \nu_2 \right) \left| \lambda_n w_n \right|^2 d\Gamma +\int_{\Omega_2} \Delta^2 w_n \left(m \cdot \nabla \overline{w_n}\right)dx= \frac{o(1)}{\lambda_n}.
	\end{equation*}
	Adding both equations and using the boundary condition $u_n=w_n$ on the interface $I$ and the estimates \eqref{dissconseq1} and \eqref{dissconseq3}, then taking the real part, we obtain
	\begin{equation}\label{addproof}
\int_{\Omega_1} \left| \lambda_n u_n \right|^2 dx + \int_{\Omega_2} \left| \lambda_n w_n \right|^2 dx +\Re\left\{\int_{\Omega_2} \Delta^2 w_n (m \cdot \nabla \overline{w_n}) dx -\int_{\Omega_1} \Delta u_n (m \cdot \nabla \overline{u_n}) dx\right\}=o(1).
	\end{equation}
	As Assumptions \ref{anglesomega1}, \ref{anglesomega2} and \ref{geometriccon} hold, it suffices to apply Lemma \ref{lemu&winequality}. Hence, inserting \eqref{u&winequality} into \eqref{addproof}, we get
	\begin{equation}\begin{split}\label{addproof&lemma}
\int_{\Omega_1} \left| \lambda_n u_n \right|^2 dx + \int_{\Omega_2} \left| \lambda_n w_n \right|^2 dx+\frac{1}{2} a(w_n,\overline{w_n})-\frac{R_1^2}{\delta} \int_{\Gamma_1} |\partial_{\nu_1} u_n|^2 d\Gamma  -\frac{\delta(1-\mu)}{4R_2^2}\int_{\Gamma_2} |\partial_{\nu_2} w_n|^2  d\Gamma  \\-\frac{2R_2^2}{\delta(1-\mu)}\int_{\Gamma_2}|\mathcal{B}_1 w_n|^2d\Gamma -\frac{R_2^2 M}{2}\int_{\Gamma_2}|\mathcal{B}_2 w_n|^2d\Gamma =o(1).
	\end{split}\end{equation}
	Finally, using the estimations \eqref{dissconseq2}, \eqref{domainconseq1}, \eqref{domainconseq2} and \eqref{domainconseq3}, it follows from \eqref{addproof&lemma} that 
	\begin{equation*}
\int_{\Omega_1} \left| \lambda_n u_n \right|^2 dx + \int_{\Omega_2} \left| \lambda_n w_n \right|^2 dx+\frac{1}{2} a(w_n,\overline{w_n})=o(1),
	\end{equation*}
	which proves the estimation \eqref{u&w&aww}, as desired.
\end{proof}
\\ \\ \textbf{Proof of Theorem \ref{PolynomialStability}.}  Using \eqref{u&wequiv} and \eqref{u&w&aww}, we deduce that
\begin{equation}\label{estnablau}
\int_{\Omega_1} |\nabla u_n|^2 dx =o(1).
\end{equation}
Therefore, combining the estimates \eqref{esdiss}, \eqref{u&w&aww} and \eqref{estnablau}, we obtain $\norm{U_n}_{\HH} \longrightarrow 0,$ which leads to the desired contradiction with \eqref{c1}. Consequently, condition $(P2)$ holds and this permits us to conclude that the energy of system \eqref{p5-sys2}-\eqref{p5-diff} decays polynomially to zero as $t$ goes to infinity. The proof is thus complete.
\section{Conclusion and Open Problems}

In this paper, we present a study of the stabilization of a transmission wave-plate model coupled through the interface with dynamical boundary controls. By employing a general criterion proposed by Arendt-Batty, we have successfully demonstrated the strong stability of the system. Additionally, we have proven the lack of exponential stability. Notably, under certain geometric assumptions on the boundary, particularly when the interface between the wave and the plate is straight, we establish a polynomial decay rate of the energy of type $1/t$.
\\ 

An intriguing question arising from the problem studied in this paper is the optimality of the polynomial decay rate through spectral analysis. Furthermore, it would be interesting to extend the obtained results to the case of a wave-plate model subject to only one or two dynamical boundary feedbacks at the exterior boundaries of the wave's and the plate's domains. Such generalizations could lead to further insights and applications in related fields.

\section*{Acknowledgments}
Sincere thanks to the anonymous referees and the associate editor for their valuable comments and useful suggestions. Mr. Ali Wehbe would like to thank the LAMA Laboratory of Mathematics at the Université Savoie Mont Blanc for their support in the research development project. Ms. Zahraa Abdallah is grateful to the Doctoral School of Science and Technology at the Lebanese University for their financial support through the doctoral scholarship. We also extend our appreciation for the kind hospitality at the LAMA Laboratory to Ms. Zahraa Abdallah during her research stays at the USMB.

\bibliographystyle{siam}

\end{document}